\pgfplotsset{compat=1.18}
\newtheorem{theorem}{Theorem}
\newtheorem{corollary}[theorem]{Corollary}
\newtheorem{example}{Example}%
\newtheorem{remark}{Remark}%
\newtheorem{lemma}{Lemma}%
\newcommand{\trinorm}[1]{{\vert\kern-0.25ex\vert\kern-0.25ex\vert #1 \vert\kern-0.25ex\vert\kern-0.25ex\vert}}
\begin{document}

\title{Discontinuous Galerkin time integration for second-order differential problems: formulations, analysis, and analogies}

\author[$\star$]{Gabriele Ciaramella}
\author[$\diamond$]{Martin J. Gander}
\author[$\star$]{Ilario Mazzieri}

\affil[$\star$]{MOX-Laboratory for Modeling and Scientific Computing, Department of Mathematics, Politecnico di Milano, P.za L. da Vinci 32, 20133, Milan, Italy.}

\affil[ ]{\texttt{gabriele.ciaramella@polimi.it}, \quad \texttt{ilario.mazzieri@polimi.it}}

\affil[$\diamond$]{Section de Mathématiques, Université de Génève, 2-4 rue du Lièvre, 1211, Genève 4,   Switzerland.}
\affil[ ]{\texttt{martin.gander@unige.ch}}

\maketitle

\noindent{\bf Keywords }: discontinuous Galerkin in time, second-order differential equations, Newark schemes, general linear methods.

\maketitle	
	
\abstract{We thoroughly investigate 
Discontinuous Galerkin (DG) discretizations as time integrators for second-order oscillatory systems, considering both second-order and first-order formulations of the original problem. Key contributions include new convergence analyses for the second-order formulation and equivalence proofs between DG and classical time-stepping schemes (such as Newmark schemes and general linear methods). In addition, the chapter provides a detailed review and convergence analysis for the first-order formulation, alongside comparisons of the proposed schemes in terms of accuracy, consistency, and computational cost.}

\section{Introduction}\label{sec:intro}
We review in this chapter time-discontinuous Galerkin finite element methods for the numerical
approximation of linear systems of second-order ordinary differential equations (ODEs). Such systems of ODEs typically come from space discretizations of wave propagation problems in acoustics, elastodynamics, or electromagnetism.
Conventional techniques for numerically integrating second-order ODEs usually use implicit or explicit finite differences, Runge-Kutta, and Newmark methods (see \cite{LeVeque2007,quarteroni2008numerical,Butcher2008} for a comprehensive overview). In various engineering scenarios, explicit methods are generally favored over implicit ones due to their computational efficiency. Despite implicit methods being (in general) unconditionally stable, explicit methods are computationally cheaper. However, explicit methods are limited by the time step constraints imposed by the Courant–Friedrichs–Lewy (CFL) condition. This constraint, determined by the spatial discretization parameters and media properties, can significantly impact the computational effectiveness.
One potential approach to mitigate the CFL constraint is implementing suitable local time stepping (LTS) algorithms \cite{Collino2003,Diaz2009,Grote2013270}, where a smaller time step is used only when necessary according to the local CFL condition. 
Alternatively, explicit LTS methods could be adopted, by using either high-order derivative discontinuous Galerkin approaches \cite{Taube2009}, or Tent Pitching algorithms \cite{gopalakrishnan2017mapped,unmapped}.

In contrast to the aforementioned methods, we analyze implicit time-integration schemes proposed in \cite{DalSanto18,AntoniettiMiglioriniMazzieri2021} that achieve arbitrarily high accuracy, based on a discontinuous Galerkin (DG) approach.
DG methods were initially introduced to approximate hyperbolic problems in space \cite{ReedHill1973} and subsequently extended to handle elliptic and parabolic equations \cite{wheeler78,arnold1982}. 
The DG method was also employed to tackle initial-value problems, cf. the pioneering work in \cite{lesaint1974finite}. In time-dependent scenarios, information flows forward in time, and solutions exhibit a causal behavior, relying on past events but not future ones. Unlike finite difference time integration schemes, where the current solution is influenced by previous steps, time discontinuous Galerkin methods, operating over time intervals $I_n=[t_n, t_{n+1}]$, establish a causal system where the solution in $I_n$ depends exclusively on the solution in $I_{n-1}$.  
The use of DG schemes as time integrators is motivated by the fact that they can be easily combined with DG (or other) space discretization methods to obtain a high-order space-time finite element formulation. 
Space-time finite elements for hyperbolic problems are typically constructed based on a system of first-order differential equations, as shown for example in  \cite{hughes1988space,Banks2014,johnson1993discontinuous,lowrie1998space,kocher2014variational,dorfler2016space,gopalakrishnan2017mapped,perugia2020tent}. On the other hand, only a limited number of recent studies have addressed finite element approximations of second-order differential systems \cite{hulbert1990space,thompson1996space,Adjerid2011837,Yang2012,Walkington2014,steinbach2019stabilized,steinbach2022generalized}. 

We review here the main aspects of DG discretizations as time integrators of second-order oscillatory systems. In particular, we consider two different formulations that arise by considering the original problem in its second-order (i) or first-order formulation (ii).
The main novelties of the chapter can be summarized as follows. For (i), we present a new framework for convergence analysis, addressing issues related to consistency and accuracy. We show the equivalence between the DG formulation with linear finite elements and the Newmark scheme, and propose optimization strategies for the DG method. Moreover, due to the unconventional algebraic structure of DG for second-order in time problems, the spectral properties of the time-stepping matrix have to be taken into account. This leads to a new analysis for this class of problems. Finally, by introducing a new theoretical framework for Generalized Linear Methods (GLMs) we show the equivalence between general order DG schemes and GLMs.
For (ii) we present an in-depth review from \cite{lesaint1974finite}, supplemented with a new algebraic proof. Moreover, we introduce the Lobatto IIIC scheme \cite{Jay2015} and present a related convergence analysis.  Finally, we refer the reader to \cite{AntoniettiArtoniCiaramellaMazzieri2025} for a review of DG time-stepping methods applied to different wave propagation problems.

Our chapter is organized as follows. In Section~\ref{sec:review} we review some classical integrators, e.g., the Newmark and the Runge-Kutta schemes, and briefly recall the classical GLM framework. Next, we analyze the time DG method for the second-order equation in Section~\ref{sec:dG:second}. We discuss its property of accuracy and consistency and show the analogy with the Newmark or the GLM schemes. 
In Section~\ref{sec:dG:first}, we consider the DG schemes as time integrators of first-order systems and show their equivalence with special implicit Runge-Kutta schemes. Finally, in Section~\ref{sec:comparison} we compare the proposed schemes from the point of view of accuracy, consistency, and computational cost, and in Section~\ref{sec:conclusions} we draw some conclusions. 

Throughout the chapter, we denote by $\|\bm a\|$ the Euclidean norm of a vector $\bm a \in \mathbb{R}^d$, $d\ge 1$ and by $\|A\|$ the $\ell^{2}$-norm of a matrix $A\in\mathbb{R}^{m\times n}$, $m,n\ge1$. For a given $I\subset\mathbb{R}$ and $v: I\rightarrow\mathbb{R}$ we denote by $L^p(I)$ and $H^p(I)$, $p\in\mathbb{N}_0$, the classical Lebesgue and Hilbert spaces, and endow them with the usual norms, see \cite{AdamsFournier2003}. Finally, we indicate the Lebesgue and Hilbert spaces for vector-valued functions as $\bm L^p(I) = [L^p(I)]^d$ and $\bm H^p(I) = [H^p(I)]^d$, $d\ge1$.

\section{A brief review of some integrators}\label{sec:review}

We now briefly review three different classes of numerical time-stepping methods: the Newmark methods in section \ref{sec:review:Newmark}, the implicit Runge-Kutta methods in section \ref{sec:review:RK}, and the general linear methods in section \ref{sec:review:GLM}. For this purpose, we consider the second-order model problem
\begin{equation}\label{eq:review:second}
	\begin{cases}
		\ddot{u}(t) = f(u(t)) \qquad \forall\, t \in (0,T],\\
		u(0) = \widehat{u}_0, \\
		\dot{u}(0) = \widehat{u}_1,
	\end{cases}
\end{equation}
which can be written as a first-order system,
\begin{equation}\label{eq:review:first}
	\begin{cases}
		\dot{v}(t) = f(u(t))\qquad \forall\, t \in (0,T],\\
        \dot{u}(t) = v(t)\qquad \forall\, t \in (0,T],\\
		u(0) = \widehat{u}_0, \\
		v(0) = \widehat{u}_1.
	\end{cases}
\end{equation}
We consider a discrete time grid of $N+1$ points $t_n=n \Delta t$, $n=0,\dots,N$, $\Delta t = \frac{T}{N}$, and denote by $u_n$ and $v_n$ the discrete approximations to $u(t_n)$ and $v(t_n)$.
Finally, to keep the discussion simple, we assume that $f$ is sufficiently regular.

\subsection{The Newmark method}\label{sec:review:Newmark}
The derivation of the Newmark scheme is based on Taylor's theorem,
\begin{equation}\label{eq:Taylor}
\begin{split}
u(t_n+\Delta t) &= u(t_n) + \Delta t \ \dot{u}(t_n) + \int_{t_n}^{t_{n+1}} \ddot{u}(\tau) (t_{n+1}-\tau) \ d\tau, \\ 
\dot{u}(t_n+\Delta t) &= \dot{u}(t_n) + \int_{t_n}^{t_{n+1}} \ddot{u}(\tau) \ d\tau.
\end{split}
\end{equation}
The Newmark scheme is obtained by a discretization of the integrals (remainder terms) in \eqref{eq:Taylor}. This is achieved by the approximation
$\ddot{u}(\tau) \approx (1-\gamma) \ddot{u}(t_n) + \gamma \ddot{u}(t_{n+1})$,
which inserted in the integrals above leads to
\begin{equation}\label{eq:Taylor_remind}
\begin{split}
&\int_{t_n}^{t_{n+1}} \ddot{u}(\tau) (t_{n+1}-\tau) \ d\tau 
\approx 
\Delta t^2 \ \Bigl[ \Bigl(\frac{1}{2}-\frac{\gamma}{2} \Bigr) \ddot{u}(t_n) + \frac{\gamma}{2} \ddot{u}(t_{n+1}) \Bigr], \\ 
&\int_{t_n}^{t_{n+1}} \ddot{u}(\tau) \ d\tau 
\approx 
\Delta t \ \bigl[(1-\gamma) \ddot{u}(t_n) + \gamma \ddot{u}(t_{n+1})\bigr] . \\ 
\end{split}
\end{equation}
Even though a natural choice for $\gamma$ would be $\gamma=\frac{1}{2}$ for both integrals (corresponding to the trapezoidal rule) in the Newmark framework, the term $\frac{\gamma}{2}$ is usually replaced by a second parameter $\beta$ that can be chosen independently of $\gamma$. 
Now, recalling that $\ddot{u}(t) = f(u(t))$, we define $f_n := f(u_n)$.
Thus, by inserting the approximation \eqref{eq:Taylor_remind} into \eqref{eq:Taylor}, we get
\begin{equation}\label{eq:Newmark}
\begin{split}
u_{n+1} &= u_{n} + \Delta t \ v_n + \Delta t^2 \ \Bigl[ \Bigl(\frac{1}{2}-\beta \Bigr) f_n + \beta f_{n+1} \Bigr], \\ 
v_{n+1} &= v_n + \Delta t \ [(1-\gamma) f_n + \gamma f_{n+1}].
\end{split}
\end{equation}
Equation \eqref{eq:Newmark} represents the Newmark family of numerical integrators. As we are going to see, the choice of $\gamma$ and $\beta$ affects the properties of the numerical integrator. Thus, each pair $(\gamma,\beta)$ gives rise to a different method.

The Newmark scheme is first-order accurate (an analysis is provided in Section \ref{sec:P1}) if $\gamma\neq \frac{1}{2}$. The only choice that makes it second-order accurate is $\gamma=\frac{1}{2}$, corresponding to a trapezoidal rule for the approximation of the integral in the second equation in \eqref{eq:Taylor}.
Notice that the choice of $\beta$ does not affect the order of consistency, but the stability features of the method. The choice $\beta=0$ leads to the leapfrog scheme, which is an explicit scheme and hence conditionally stable \cite{raviart1983introduction}. If $\beta>0$ is chosen, then the Newmark scheme \eqref{eq:Newmark} becomes implicit, and the particular choice $\beta=\frac{1}{4}$ makes the method conservative:
if $f(u) = \lambda$, with $\lambda<0$, then it is possible to show that $|\dot{u}(t)|^2+\lambda|u(t)|^2$ is constant for all $t\geq 0$, and the choice $\beta=\frac{1}{4}$ guarantees that $|v_n|^2+\lambda|u_n|^2$ remains also constant for all $n$.
Notice that the choice $\gamma=\frac{1}{2}$ and $\beta=\frac{1}{4}$ corresponds to the trapezoidal rule for the discretization of both integrals in \eqref{eq:Taylor_remind}. 
It is the only choice making the method conservative, and one can show that it leads exactly to the Crank-Nicolson scheme for \eqref{eq:review:first}. 
A general treatment for different values of $\gamma$ and $\beta$ can be found in \cite{raviart1983introduction}.

\subsection{Implicit Runge-Kutta methods}\label{sec:review:RK}
An $s-stage$ Runge-Kutta scheme applied to system \eqref{eq:review:first} is characterized by the Butcher tableau 
\begin{equation}\label{eq:Butcher_tableau}
\renewcommand\arraystretch{1.2}
\begin{array}{c|c}
\bm c & A\\
\hline
& \bm b
\end{array}
\end{equation}
with matrix $A = \{a_{ij} \} \in \mathbb{R}^{s\times s}$, weight vector $\bm b = (b_1, \dots , b_s)$, and
quadrature nodes $\bm c = (c_1, \dots , c_s)^\top$.
Runge-Kutta methods update the solution using a sum over stage vectors as follows:
\begin{equation}\label{eq:RK_extended_0}
    \begin{cases}
        \bm z_{n+1} = \bm z_n + \Delta t \sum_{i=1}^s b_i \bm k_{i,n},\\
        \bm k_{i,n}  = f \left( \bm z_n + \Delta t \sum_{j=1}^s a_{ij} \bm k_{j,n}\right).
    \end{cases}
\end{equation}
If the coefficients $\{a_{ij}\}$ in $A$ are nonzero for $j \geq i$, for $i = 1, 2, \dots , s$, then 
every $\bm k_{i,n}$ can be obtained explicitly as a function of the $i-1$ stages $\bm k_{1,n},...,\bm k_{i-1,n}$ computed previously.
In this case, the scheme is called \textit{explicit}. Otherwise, the Runge-Kutta scheme is called \textit{implcit} and a non-linear system has to be solved  to compute 
$\bm k_{i,n}$. 
When $f$ is a linear operator represented by a matrix $L$, the stages $\bm k_{i,n}$, $i=1,\dots,s$, can be expressed as the solution of the linear system 
\begin{equation}\label{eq:RK_extended}
    \left(\begin{bmatrix}
        I & & \bm 0\\
          & \ddots & \\
          \bm 0 &  & I
    \end{bmatrix}
  -\Delta t \begin{bmatrix}
        a_{11} L& \ldots & a_{1s} L\\
         \vdots & \ddots & \vdots \\
          a_{s1}L  & \ldots & a_{ss}L
    \end{bmatrix}
   \right)
   \begin{bmatrix}
        \bm k_{1,n} \\
         \vdots \\
         \bm k_{s,n}
    \end{bmatrix}
    = 
    \begin{bmatrix}
        \bm f_1  \\
         \vdots \\
         \bm f_s
    \end{bmatrix},     
   \end{equation}
with $\bm f_i = L\bm z_n$,
for $i=1,..,s$.
System \eqref{eq:RK_extended}  
can be expressed in the equivalent compact Kronecker product form
\begin{equation}\label{eq:rk_kroneker}
    (I_s \otimes I_2 - \Delta t A \otimes L) \bm k_n = \bm f.
\end{equation}
%
Lobatto methods are fully implicit RK methods for which the coefficients $b_j$ and $c_j$ in \eqref{eq:Butcher_tableau} are determined based on the Lobatto quadrature formula 
 with $s$ node coefficients $c_1,\dots,c_s$, and $s$ weight coefficients $b_1, \dots, b_s$.
 
 The $s$ nodes $c_j$ are the roots of the polynomial of degree $s$
 \begin{equation*}
     \dfrac{d^{s-2}}{dt^{s-2}} (t^{s-1}(1-t)^{s-1}),
 \end{equation*}
and satisfy $c_1 < c_2 < \dots < c_s$.
The weights $b_j$ and nodes $c_j$ satisfy the order conditions $B(2s-2)$, which are for generic argument $p$ given by
\begin{align*}
    B(p) & : \quad\sum_{j=1}^s b_j c_j^{k-1} = \dfrac{1}{k}, \quad k=1,\dots,p,
\end{align*}
implying that the quadrature formula is of order $2s -2$.
The families of Lobatto RK methods differ only in the values of their coefficients $a_{ij}$. Here, we recall the following order conditions that can be used to determine their expressions:
\begin{align}
    C(q) & :\quad \sum_{j=1}^s a_{ij}c_j^{k-1} = \dfrac{c_i^k}{k}, \quad i=1,\dots,s \; {\rm and} \; k= 1,\dots,q,\\
    D(r) & :\quad \sum_{i=1}^s b_i c_i^{k-1} a_{ij} = \dfrac{b_j}{k}(1-c_i^k), \quad j=1,\dots,s \; {\rm and} \; k= 1,\dots,r.
\end{align}
The importance of these order conditions comes from a fundamental result due to Butcher, cf. also \cite[Therem 5.1]{wanner1996solving}: 
\textit{if the coefficients $b_i$, $c_i$, $a_{ij}$ of a Runge-Kutta method satisfy $B(p)$, $C(q)$, and $D(r)$, with $p\leq q+r+1$ and $p\leq 2q + 2$, then the method is of order $p$}.

Four main families belong to the set of Lobatto RK methods, namely Lobatto IIIA, Lobatto IIIB, Lobatto IIIC, and Lobatto IIIC$^*$ methods, cf. \cite{wanner1996solving}. We briefly recall some results for Lobatto IIIC: their coefficients $a_{ij}$ are defined by $a_{i1} = b_1$ for $i=1,\dots,s$ and the order conditions $C(s-1)$. They satisfy the order conditions $D(s-1)$ and $a_{sj} = b_j$, for $j=1,\dots,s$. Lobatto IIIC methods are of order $2s-2$. The order of convergence is evaluated for the last stage (not intermediate stages for which the order is $s$; see \cite{wanner1996solving}).  They are not symmetric. Their stability function $R(z)$ is given by the $(s-2, s)$-Padé approximation to
$e^z$, e.g.,
\begin{align*}
    R(z) & = \dfrac{1}{1-z+\frac{z^2}{2}}, \quad { s=2}, \\
    R(z) & = \dfrac{1+\frac{1}{4}z}{1-\frac{1}{4}z+ \frac{1}{2}\frac{z^2}{2\!} - \frac{1}{4}\frac{z^3}{3\!} }, \quad {s=3}.    
\end{align*}
They are L-stable, i.e., they are A-stable and $R(z) \rightarrow 0$ as $z \rightarrow \infty$. They are algebraically stable, i.e.,
\begin{equation*}
    B := {\rm diag}(b_1,\dots,b_s), \; {\rm and } \;  M := BA+A^\top B- \bm b \bm b^\top
\end{equation*}
are positive semi-definite, \cite{Jay2015}.
The above condition implies that the Lobatto IIIC methods are B-stable. Thus, they are excellent methods for stiff problems.

\subsection{General linear methods}\label{sec:review:GLM}
General linear methods (GLMs) were introduced by Butcher in \cite{Butcher1996} and further developed by several authors; see, e.g., \cite{Butcher2006} for a general survey. 
For our short review, we start from the point of view given by \cite{DAmbrosio2014}, which is a direct application of Butcher's original idea to second-order problems.

The essential idea of GLMs is to consider a vector ${\bf y}^{[n]} \in \mathbb{R}^r$, $r \in \mathbb{N}^+$, which does not only contain the approximation $u_n$ to $u(t_n)$, but also additional information in components needed to build the integration step, and a stage-like vector (in the spirit of RK methods), denoted by $Y\in \mathbb{R}^s$, $s \in \mathbb{N}^+$, accounting for implicit parts of the integration step. 
Thus, a single step of a GLM reads as
\begin{equation}\label{eq:GLM}
    \begin{bmatrix}
    Y \\
    {\bf y}^{[n+1]} \\
    \end{bmatrix}
    =
    \begin{bmatrix}
        A & U \\
        B & V \\
    \end{bmatrix}
    \begin{bmatrix}
    \Delta t^2 f(Y) \\
    {\bf y}^{[n]} \\
    \end{bmatrix},
\end{equation}
where the entries of the matrices $A \in \mathbb{R}^{s \times s}$, $U \in \mathbb{R}^{s \times r}$, $B \in \mathbb{R}^{r \times s}$, and $V \in \mathbb{R}^{r \times r}$ are used to define the GLM.
The system \eqref{eq:GLM} can be written explicitly as
\begin{align}
    Y_i = \Delta t^2 \sum_{j=1}^s a_{i,j}f(Y_j) + \sum_{j=1}^r u_{i,j} y_j^{[n]}, &&\text{for $i=1,\dots,s$,} \label{eq:GLM2_A} \\
    y_i^{[n+1]} = \Delta t^2 \sum_{j=1}^s b_{i,j}f(Y_j) + \sum_{j=1}^r v_{i,j} y_j^{[n]}, &&\text{for $i=1,\dots,r$.} \label{eq:GLM2_B}
\end{align}
Notice that \eqref{eq:GLM} can be implicit in the stage vector $Y$, while it is explicit in~${\bf y}$.
Let us give a simple example to better clarify the idea behind a GLM.

\begin{example}[Newmark as a GLM]\label{example:GLM:1}
    Consider the Newmark scheme \eqref{eq:Newmark} for $\gamma=\frac{1}{2}$ and $\beta=\frac{1}{4}$:
    \begin{equation}\label{eq:Newmark2}
    \begin{split}
    u_{n+1} &= u_{n} + \Delta t \ v_n + \frac{\Delta t^2}{4} \ \Bigl[  f(u_{n+1}) + f(u_{n}) \Bigr], \\ 
    v_{n+1} &= v_n + \frac{\Delta t}{2} \ \Bigl[  f(u_{n+1}) + f(u_{n}) \Bigr].
    \end{split}
    \end{equation}
    To write this scheme in the form \eqref{eq:GLM}, we introduce the variables
    $a_{n} := \Delta t \ v_{n}$, $b_{n} := \Delta t^2 \ f(u_{n})$, and
    rewrite the first equation in \eqref{eq:Newmark2} as 
    \begin{equation}\label{eq:A}
    Y = u_{n} + a_n + \frac{\Delta t^2}{4} f(Y) + \frac{1}{4} b_{n},
    \end{equation}
    and the second equation in \eqref{eq:Newmark2} as
    \begin{equation}\label{eq:B}
    a_{n+1} = a_n + \frac{1}{2}b_n + \frac{\Delta t^2}{2} f(Y).
    \end{equation}
    Notice that $Y=u_{n+1}$. This also implies that 
    \begin{equation}\label{eq:C}
    b_{n+1} = \Delta t^2 \ f(u_{n+1}) = \Delta t^2 \ f(Y).
    \end{equation}
    Collecting \eqref{eq:A}, \eqref{eq:B}, \eqref{eq:C} and the first equation of \eqref{eq:Newmark2}, we obtain
    \begin{equation*}
        \begin{bmatrix}
            Y \\
            a_{n+1} \\
            b_{n+1} \\
            u_{n+1}
        \end{bmatrix}
        =
        \begin{bmatrix}
            1/4 & 1 & 1/4 & 1 \\
            1/2 & 1 & 1/2 & 0 \\
            1 & 0 & 0 & 0 \\
            1/4 & 1 & 1/4 & 1 \\
        \end{bmatrix}
        \begin{bmatrix}
            \Delta t^2 f(Y) \\
            a_{n} \\
            b_{n} \\
            u_{n}
        \end{bmatrix}.
    \end{equation*}
    This is exactly in the form \eqref{eq:GLM} with ${\bf y}^{[n]} = [a_{n},b_{n},u_{n}]^\top$.
    Notice that this system is only implicit in the variable $Y$, and $a_{n}$ and $b_{n}$ contain auxiliary information, and form the vector ${\bf y}^{[n]}$ together with the approximation $u_n$.
\end{example}
\textit{A GLM of the form \eqref{eq:GLM} is convergent if and only if it is consistent and zero stable, (see, e.g., \cite{Butcher2006,DAmbrosio2012}).}
Thus, we need to clarify the notions of convergence, consistency and zero stability for GLMs.

Neglecting the issue of a starting procedure (see \cite{Butcher2006,DAmbrosio2012}), we say that a GLM converges if ${\bf y}^{[n]} \rightarrow \widehat{{\bf y}}^{[n]} = {\bf q}_0 u(t_n)$, for all $n$ and some vector ${\bf q}_0$, as $\Delta t \rightarrow 0$.

The zero stability is the property of the method to well represent the solution of $\ddot{u}(t) =0$. For the case $f=0$, the GLM \eqref{eq:GLM} becomes ${\bf y}^{[n+1]} = V {\bf y}^{[n]}$.
Therefore, the zero stability is related to the spectral properties of the matrix $V$ 
\cite[Theorem~4.1]{DAmbrosio2012}: \textit{a GLM of the form \eqref{eq:GLM} is zero stable if the eigenvalues of $V$ lie within or on the unit circle 
and the multiplicity of those lying on the unit circle is at most two.}\footnote{Note that this definition of zero stability is the one given in \cite{DAmbrosio2012} (see also \cite[Definition 10.1]{wanner1996solving}) for second-order equations.
The meaning is that, when applied to \eqref{eq:review:second} with $f=0$, and $\hat{u}_1=0$, a zero-stable method preserves the second-order derivative.
A different definition is provided in \cite[Definition 3.2]{wanner1996solving} for first-order systems.}

The consistency of GLMs is usually accomplished by the so-called \textit{order conditions} (see, e.g., \cite{Butcher2006,DAmbrosio2012,DAmbrosio2012,Li2016}).
To obtain them, consider the solution $u(t)$ to \eqref{eq:review:second} and an input vector ${\bf y}^{[n]}$, whose components are written in a generalized Taylor form:
\begin{equation}\label{eq:GLM:input}
    y_i^{[n]} := \sum_{k=0}^p q_{i,k} \Delta t^k u^{(k)}(t_n) + O(\Delta t^{p+1}),
    \quad \text{for $i=1,\dots,r$},
\end{equation}
where $q_{i,k}$ are given coefficients, and
\begin{equation}\label{eq:GLM:output2}
    Y_i = u(t_n+c_i \Delta t) + O(\Delta t^{p+1}) \quad \text{for $i=1,\dots,s$},
\end{equation}
where $c_i$ are stage coefficients. 
Notice that this definition is consistent with the notion of convergence we stated above.
Now, the driving question is: under which conditions on the matrices $A$, $B$, $U$, and $V$ and on the coefficients $q_{i,k}$ and $c_i$ does one step of a GLM \eqref{eq:GLM} produce exactly the output 
\begin{equation}\label{eq:GLM:output}
    y_i^{[n+1]} := \sum_{k=0}^p q_{i,k} \Delta t^k u^{(k)}(t_n+\Delta t) + O(\Delta t^{p+1}),
    \quad \text{for $i=1,\dots,r$},
\end{equation}
if the input \eqref{eq:GLM:input} is considered? 
More precisely: which conditions on $A$, $B$, $U$, $V$, $q_{i,k}$, and $c_i$ guarantee that the output is exact in all the terms till order $p$?

The answer is provided by the \textit{order conditions} that are algebraic conditions on $A$, $B$, $U$, $V$, and $q_{i,k}$. 
Notice that we consider here the same order $p$ in \eqref{eq:GLM:output} and \eqref{eq:GLM:output2}.
This choice is made only for the sake of simplicity, and more general results can be found in the literature (see, e.g., \cite{DAmbrosio2012}).
The derivation proceeds by Taylor expansions, and we obtain the order conditions as necessary conditions for \eqref{eq:GLM:input}, \eqref{eq:GLM:output}, and \eqref{eq:GLM:output2}.
Recalling that $\ddot{u}(t) = f(u(t))$ and using \eqref{eq:GLM:output2} we get
\begin{equation}\label{eq:GLM:exp1}
    \Delta t^2 f(Y_i) 
    = \Delta t^2 \ddot{u}(t_n+c_i \Delta t) + O(\Delta t^{p+3}) 
    = \sum_{k=2}^p \Delta t^k u^{(k)}(t_n) \frac{c_i^{k-2}}{(k-2)!} + O(\Delta t^{p+1}).
\end{equation}
Now, using \eqref{eq:GLM2_A}, \eqref{eq:GLM:input}, \eqref{eq:GLM:output2}, and \eqref{eq:GLM:exp1}, we get
\begin{equation*}
\begin{split}
   u(t_n+c_i \Delta t)
   &\stackrel{\eqref{eq:GLM:output2}}{=} Y_i + O(\Delta t^{p+1})
   \stackrel{\eqref{eq:GLM2_A}}{=} \Delta t^2 \sum_{j=1}^s a_{i,j}f(Y_j) + \sum_{j=1}^r u_{i,j} y_j^{[n]} + O(\Delta t^{p+1}) \\
   &\stackrel{\eqref{eq:GLM:exp1}}{=}
   \sum_{j=1}^s a_{i,j} \sum_{k=2}^p \Delta t^k u^{(k)}(t_n) \frac{c_i^{k-2}}{(k-2)!}
   + \sum_{j=1}^r u_{i,j} y_j^{[n]} + O(\Delta t^{p+1})\\
   &\stackrel{\eqref{eq:GLM:input}}{=}
   \sum_{j=1}^s a_{i,j} \sum_{k=2}^p \Delta t^k u^{(k)}(t_n) \frac{c_i^{k-2}}{(k-2)!}
   + \sum_{j=1}^r u_{i,j} \sum_{k=0}^p q_{j,k} \Delta t^k u^{(k)}(t_n) \\ & \qquad \quad + O(\Delta t^{p+1}).\\
\end{split}
\end{equation*}
Rearranging the sums in this expression, we get
\begin{equation}\label{eq:GLM:output3}
\begin{split}
   u(t_n+c_i \Delta t)
   &= \sum_{j=1}^r u_{i,j} q_{j,0} u(t_n)
   + \sum_{j=1}^r u_{i,j} q_{j,1} \Delta t \ u^{(1)}
   (t_n) \\
   &+ \sum_{k=2}^p \Delta t^k \Biggl[ 
   \sum_{j=1}^r u_{i,j} q_{j,k} u^{(k)}(t_k)
   + \sum_{j=1}^s a_{i,j} \frac{c_j^{(k-2)}}{(k-2)!}
   \Biggr] + O(\Delta t^{p+1}).
\end{split}
\end{equation}
Now, we expand $u(t_n+c_i \Delta t)$ and write
\begin{equation}\label{eq:GLM:output4}
    u(t_n+c_i \Delta t) 
    = u(t_n) + c_i \Delta t \ u^{(1)}(t_n)
    + \sum_{k=2}^p \Delta t^k \frac{c_i^k}{k!} u^{(k)}(t_n) + O(\Delta t^{p+1}).
\end{equation}
Now, by a direct comparison of \eqref{eq:GLM:output3} and \eqref{eq:GLM:output4}, matching term by term for each $i$, we obtain the conditions
\begin{subequations}
\begin{align}
    U {\bf q}_0 &= {\bf 1} &&k=0 \text{ (preconsistency)}, \label{eq:OrdC:1}\\
    U {\bf q}_1 &= {\bf c} &&k=1 \text{ (preconsistency)},\label{eq:OrdC:2}\\
    U {\bf q}_2 + A{\bf 1} &= \frac{1}{2}{\bf c}^2 &&k=2 \text{ (stage consistency)},\label{eq:OrdC:3}\\
    U {\bf q}_k+ \frac{1}{(k-2)!}A{\bf c}^{k-2} &= \frac{1}{k!}{\bf c}^k &&k>2,\label{eq:OrdC:4}
\end{align}
\end{subequations}
where ${\bf 1} \in \mathbb{R}^s$ denotes a vector whose components are all equal to $1$, and ${\bf c}^k = [c_1^k,\dots,c_s^k]^\top$.
Now, using \eqref{eq:GLM2_B}, \eqref{eq:GLM:input}, \eqref{eq:GLM:output2}, \eqref{eq:GLM:output}, and \eqref{eq:GLM:exp1}, and proceeding as before, we get
\begin{subequations}
\begin{align}
    V {\bf q}_0 &= {\bf q}_0 &&k=0 \text{ (preconsistency)},\label{eq:OrdC:5}\\
    V {\bf q}_1 &= {\bf q}_0+{\bf q}_1 &&k=1 \text{ (preconsistency)},\label{eq:OrdC:6}\\
    V {\bf q}_2 + B{\bf 1} &= \frac{1}{2}{\bf q}_0+{\bf q}_1+{\bf q}_2 &&k=2,\label{eq:OrdC:7}\\
    V {\bf q}_k+ \frac{1}{(k-2)!}B{\bf c}^{k-2} &= \sum_{\ell=0}^k \frac{1}{\ell!}{\bf q}_{k-\ell} &&k>2.\label{eq:OrdC:8}
\end{align}
\end{subequations}
Equations \eqref{eq:OrdC:1}-\eqref{eq:OrdC:8} are the order conditions. Even though we derived them as necessary conditions for \eqref{eq:GLM:input}, \eqref{eq:GLM:output}, and \eqref{eq:GLM:output2}, they can be shown to be also sufficient conditions; see \cite{Butcher2006,DAmbrosio2012}.
The equations \eqref{eq:OrdC:1}, \eqref{eq:OrdC:2}, \eqref{eq:OrdC:5}, and \eqref{eq:OrdC:6} are also known as \textit{preconsistency conditions}. 
A GLM is said to be \textit{consistent} if it is preconsistent and \eqref{eq:OrdC:7} holds. Moreover, a GLM is \textit{stage consistent} if it satisfies equation \eqref{eq:OrdC:3}.
The conditions \eqref{eq:OrdC:5} and \eqref{eq:OrdC:6} can be interpreted as the property of the GLM to properly represent solutions to $\ddot{y}(t)=0$. If this equation is complemented by initial conditions $y(0) \neq 0$ and $\dot{y}(0)=0$, then $y(t)$ is constant. 
This is clearly represented by the condition ${\bf q}_0 = V{\bf q}_0$.
Moreover, if $\ddot{y}(t)=0$ is complemented by initial conditions $y(0) \neq 0$ and $\dot{y}(0)\neq 0$, then $y(t)$ is linear-affine.
This structure is represented by the condition ${\bf q}_0 + {\bf q}_1 = V{\bf q}_1$.

Now a GLM is convergent if and only if it is zero stable and it satisfies \eqref{eq:OrdC:1}, \eqref{eq:OrdC:2}, \eqref{eq:OrdC:5}, \eqref{eq:OrdC:6}, and \eqref{eq:OrdC:7}. Clearly, in this case the order of convergence is $1$.
If in addition, \eqref{eq:OrdC:3}, \eqref{eq:OrdC:4} and \eqref{eq:OrdC:8} are satisfied for $k=2,\dots,p$, then the GLM is convergent of order $p$.

\begin{example}[Order conditions of Newmark as a GLM]\label{example:GLM:2}
Recalling the Newmark scheme written as a GLM in Example \ref{example:GLM:1}, we have that $V = \begin{bmatrix}
            1 & 1/2 & 0 \\
            0 & 0 & 0 \\
            1 & 1/4 & 1 \\
\end{bmatrix}$, which has eigenvalues equal to 0 and 1.
Thus, the Newmark scheme as a GLM is zero stable.
A direct calculation shows that the vectors
\begin{equation*}
    {\bf q}_0 = \begin{bmatrix}
        0 \\ 0 \\ 1
    \end{bmatrix}, \quad
    {\bf q}_1 = \begin{bmatrix}
        1 \\ 0 \\ 0
    \end{bmatrix}, \quad
    {\bf q}_2 = \begin{bmatrix}
        0 \\ 1  \\ 0
    \end{bmatrix}, \quad
    {\bf q}_3 = \begin{bmatrix}
        -\frac{1}{12} \\ 0  \\ 0
    \end{bmatrix}
\end{equation*}
satisfy the order conditions for $k=0,1,2,3$. However, it is also possible to show that there does not exist a vector ${\bf q}_4$ such that the order conditions hold for $k=4$. Thus, $p=3$ and we can conclude that the Newmark scheme as a GLM is second-order convergent as expected.
\end{example}

\section{Time DG methods for second-order equations: DG2}\label{sec:dG:second}
Let us consider $T>0$ and the model problem 
\begin{equation}
	\label{eq:modelProblem}
	\begin{cases}
		\ddot{u}(t) + \lambda u(t) = 0 \qquad \forall\, t \in (0,T],\\
		u(0) = \widehat{u}_0, \\
		\dot{u}(0) = \widehat{u}_1,
	\end{cases}
\end{equation}
where $\lambda \in \mathbb{R}^+$ is a positive coefficient and $\widehat{u}_0, \widehat{u}_1 \in \mathbb{R}$. 
We consider a partition for the interval $ I = (0, T] $ into $N $ sub-intervals (time slabs) $ I_n = (t_{n-1}, t_n] $ such that $\Delta t_n = t_n - t_{n-1}$, for $ n = 1,..,N $,  with $t_0 = 0$ and $t_N=T$, as shown in Figure~\ref{fig:time_domain}. We suppose in what follows that all time slabs have the same size, that is $\Delta t_n = \Delta t$ for all $n$.
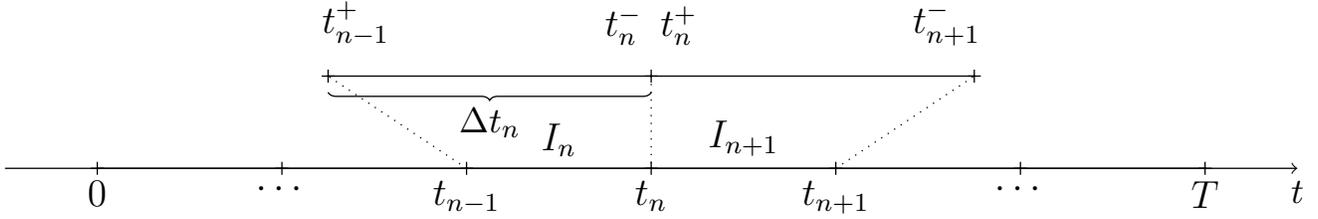
\begin{figure}[t]
		\centering
		{\resizebox{1\textwidth}{!}{\definecolor{mycolor1}{rgb}{0.00000,0.49804,0.00000}%
\begin{tikzpicture}

\draw [->]  (-1,0) -- (13,0) node [below] {$t$};
\draw plot [mark=+, smooth, line width=7pt] coordinates
{(0,0) (2,0) (4,0) (6,0) (8,0) (10,0) (12,0)  };

\node [font=\large] at (0,0) [below] {0};
\node [font=\large] at (2,0) [below] {$\cdots$};
\node [font=\large] at (4,0) [below] {$t_{n-1}$};
\node [font=\large] at (5,0) [above] {$I_{n}$};
\node [font=\large] at (6,0) [below] {$t_{n}$};
\node [font=\large] at (7,0) [above] {$I_{n+1}$};
\node [font=\large] at (8,0) [below] {$t_{n+1}$};
\node [font=\large] at (10,0) [below] {$\cdots$};
\node [font=\large] at (12,0) [below] {$T$};

\draw plot [mark=+, smooth, line width=9pt,mark options={solid,black}] coordinates
{(2.5,1) (6,1) (9.5,1)  };

\draw [dotted] (6,1) -- (6,0);
\draw [dotted] (4,0) -- (2.5,1);
\draw [dotted] (8,0) -- (9.5,1);

\node [font=\large] at (6.3,1.2) [above] {$t_{n}^{+}$};
\node [font=\large] at (5.7,1.2) [above] {$t_n^-$};

\node [font=\large] at (2.8,1.2) [above] {$t_{n-1}^+$};
\node [font=\large] at (9.2,1.2) [above] {$t_{n+1}^-$};

\draw[decoration={brace,mirror,raise=5pt},decorate]
  (2.5,1) -- node[below=6pt] {$\Delta t_n$} (6,1);

\end{tikzpicture}}}
		\caption{Example of a time domain partition and a zoom  with the values $t_n^+$ and $t_n^-$.}\label{fig:time_domain}
	\end{figure}
To build an approximation to the solution $u$ on each $I_n$, we multiply \eqref{eq:modelProblem} by $\dot{v}(t)$ and integrate over $I_n$,
\begin{equation}\label{eq:intPart}
(\ddot{u},\dot{v})_{I_n} + \lambda ( u, \dot{v} )_{I_n} = 0.
\end{equation}
Next, assuming sufficient regularity of $u$ (namely $u \in H^2(0,T) \Subset C^1([0,T])$), and observing that $ [u]_n = [\dot{u}]_n = 0$, for  $n=1, \dots, N$, we can rewrite \eqref{eq:intPart} by adding suitable consistent terms for any $s \geq 0$ as follows:
\begin{equation}\label{eq:intStrong}
(\ddot{u},\dot{v})_{I_n} + \lambda ( u, \dot{v} )_{I_n} +  [\dot{u}]_{n-1} \dot{v}_{n-1}^+ +  \lambda [u]_{n-1} v_{n-1}^+ - s \lambda\{ \dot{u} \}_{n-1} \dot{v}_{n-1}^+ = 0.
\end{equation}
 For $s=0$, this formulation is equivalent to the one shown in \cite{DalSanto18}. The term $-s \lambda\{ \dot{u} \}_{n-1} \dot{v}_{n-1}^+$ is introduced as a correction term to improve the accuracy of the scheme. Moreover, to guarantee that this is a consistent correction, the parameter $s$ will be chosen proportional to $\Delta t^p$, with $p$ to be determined.
Next, we introduce the local finite-dimensional space
$\mathbb{V}_n^r = \{v: I_n \rightarrow \mathbb{R}: v \in \mathbb{P}^r(I_n) \},
$
where $ \mathbb{P}^r(I_n) $ is the space of polynomials of degree $ r \geq 1 $ on $ I_n $ and define the DG space as
\begin{align*}
\mathbb{V}_{DG} = \{v \in L^2(0,T): v|_{I_n} \in \mathbb{V}_n^r \hspace{3mm} \forall \, n = 1, \dots, N \},
\end{align*}
having finite dimension $(r+1)N$.  
Summing over all time slabs in \eqref{eq:intStrong}, we obtain the problem:
find $u_{DG} \in \mathbb{V}_{DG}$ such that 
\begin{align}\label{eq:discreteFormulation}
\mathcal{A}(u_{DG}, v) = F(v) \qquad \forall \, v\in \mathbb{V}_{DG}, 
\end{align}
where $\mathcal{A} : \mathbb{V}_{DG} \times  \mathbb{V}_{DG} \rightarrow \mathbb{R}$ is defined by
\begin{align}
\mathcal{A}(u, v) & := \sum_{n=1}^N \Big( (\ddot{u},\dot{v})_{I_n} + \lambda (u, \dot{v} )_{I_n}\Big) +   \sum_{n=1}^{N-1} \left( \dot{[u]}_{n} \dot{v}_{n}^+  +  \lambda [u]_{n}  v_{n}^+  
- s \lambda \{ \dot{u} \}_{n} \dot{v}_{n}^+ \right) \nonumber \\  & \quad   
 + \dot{u}_{0}^+ \dot{v}_{0}^+
+  \lambda u_{0}^+  v_{0}^+   - \frac{s}{2} \lambda \dot{u}_0^+ \dot{v}_0^+ ,  \label{eq:bilinearAn}
\end{align}
and the linear functional $F : \mathbb{V}_{DG} \rightarrow \mathbb{R} $ is defined as
\begin{align}\label{eq:linearF}
F (v) & := \widehat{u}_{1} \dot{v}_{0}^+ +  \lambda \widehat{u}_{0} v_{0}^+  + \frac{s}{2} \lambda \widehat{u}_{1} \dot{v}_{0}^+ .
\end{align}
For $ n = 1 $, we adopted the convention $ {u}_0^- = \widehat{u}_0 $ and $ \dot{u}_0^- = \widehat{u}_1 $. Moreover, we point out that the definition of the bilinear form $\mathcal{A}(\cdot, \cdot)$ makes sense 
whenever its arguments are, at least, $H^2(I_n)$ functions for any $n=1,\ldots, N$. The existence and uniqueness of the discrete solution as well as stability bounds in a suitable mesh-dependent norm can be found in \cite{DalSanto18} for the case $s=0$.

\subsection{Algebraic formulation for the DG2 discretization}\label{sec:algebraic}
We focus on the algebraic formulation of problem \eqref{eq:intStrong} by considering a generic time slab $I_{n}$, where a local polynomial degree $r$ is used. With the DG technique \eqref{eq:intStrong}, it is possible to compute the solution of the problem separately for one time slab at a time, by using the solution computed at the previous time slab. 
To do so, we introduce a  basis $\{\psi_{n,j}(t) \}_{j=1, \dots, r+1}$ for the polynomial space $\mathbb{P}^r(I_{n})$, $r+1$ being the dimension of the local finite-dimensional space $\mathbb{V}_{n}^r$. Next, we write the trial function $u_{DG}$ as a linear combination of the basis functions, i.e.,
\begin{align*}
u(t) = 
\sum \limits_{j=1}^{r+1} u_{n,j} \psi_{n,j}(t), \text{for $t \in I_{n}$} , 
\end{align*}
where $ u_{n,j} \in \mathbb{R}$, for $j = 1, \dots, r+1$. By writing equation \eqref{eq:intStrong} for any test function $\psi_{n,i}, i = 1, \dots, r+1$ and $n=0,\dots,N$, we obtain the system  
\begin{align}\label{eq:simple_local_system}
A \bm u = \bm b,
\end{align}
where
\begin{equation*}
    A = \begin{bmatrix}
        I \\
        A_- & A_+ \\
        & A_- & A_+ \\
        & & \ddots & \ddots \\
        & & & A_- & A_+ \\
    \end{bmatrix},
    \quad
    \bm u = \begin{bmatrix}
        \bm u_0 \\
        \bm u_1 \\
        \bm u_2 \\
        \vdots \\
        \bm u_N \\
    \end{bmatrix},
    \quad
    \bm b = \begin{bmatrix}
        \bm b_0 \\
        \bm 0 \\
        \bm 0 \\
        \vdots \\
        \bm 0 \\
    \end{bmatrix}.
\end{equation*}
Here, $\bm u \in \mathbb{R}^{(r+1)(N+1)}$ is the vector containing the slab coefficients $\bm u_n = (u_{n,1},\dots,u_{n,r+1})^\top\in \mathbb{R}^{(r+1)}$, $n=1,\dots,N$, $\bm u_0$ and $\bm b_0$ contain the initial conditions, and the local matrices $A_-$ and $A_+$ are given by
 \begin{align}
     A_{-,ij} & = -\langle \dot{\psi}_{n,j}, \dot{\psi}_{n+1,i} \rangle_{t_n}- \lambda \langle \psi_{n,j}, \psi_{n+1,i} \rangle_{t_n} - \frac{s}{2}\lambda \langle \dot{\psi}_{n,j}, \dot{\psi}_{n+1,i} \rangle_{t_n}, \label{eq:a_m}\\
     A_{+,ij} & = (\ddot{\psi}_{n+1,j},\dot{\psi}_{n+1,i})_{I_n} + \lambda ( \psi_{n+1,j}, \dot{\psi}_{n+1,i} )_{I_n} +\langle \dot{\psi}_{n+1,j}, \dot{\psi}_{n+1,i} \rangle_{t_n} \label{eq:a_p} \\ & \quad + \lambda \langle \psi_{n+1,j}, \psi_{n+1,i} \rangle_{t_n}  - \frac{s}{2}\lambda \langle \dot{\psi}_{n+1,j}, \dot{\psi}_{n+1,i} \rangle_{t_n} \nonumber
 \end{align} 
for $i,j=1,\dots,r+1$. 
Clearly, defining $G:=-A_+^{-1}A_-$, \eqref{eq:simple_local_system} can be written as
\begin{equation}\label{eq:simple_local_system2}
    A_+ \bm u_{n+1} + A_- \bm u_{n} = \bm 0
    \; \Longleftrightarrow \;
    \bm u_{n+1} = G \bm u_{n} \, \text{ for $n=0,1,\dots,N-1$}.
\end{equation}
To study the accuracy and consistency of a scheme of the form \eqref{eq:simple_local_system}-\eqref{eq:simple_local_system2}, we define error and truncation error as $\bm e_n := \bm u_n^{ex} - \bm u_n$ and $\bm \theta_n := \bm u_{n+1}^{ex} - G \bm u_{n}^{ex}$, for all $n$, where $\bm u_n^{ex}$ denotes
the vector containing the exact solution evaluated at the nodes of the slab $I_n$.
The definitions of $\bm e_n$ and $\bm \theta_n$ yield immediately
\begin{equation}\label{eq:error}
    \bm e_{n+1} = G \bm e_{n} + \bm \theta_n .
\end{equation}
We now have the following result.
\begin{lemma}[accuracy and consistency]\label{lemma:general}
Consider the scheme \eqref{eq:error} for $n=0,\dots,N-1$. 
Let $G=WDW^{-1}$ be the eigen-decomposition of $G$, with $D$ denoting the diagonal matrix whose entries are the eigenvalues of $G$. Assume that the spectral radius of $G$ satisfies $\rho(G)\leq 1$. 
If $\bm e_0 = \bm 0$, $W=O(\Delta t^q)$, and $W^{-1} \bm \theta_n = O(\Delta t^p)$, for some $q,p \in \mathbb{N}$, then $\bm e_n = O(\Delta t^{q+p-1})$.
\end{lemma}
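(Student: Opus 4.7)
The plan is to unroll the error recursion, exploit the eigendecomposition of $G$ to control powers of $G$ by a constant, and then convert the linear-in-$n$ factor that appears from summing into one power of $\Delta t$ via the relation $n\Delta t \le T$.

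\textbf{Step 1: Unrolling.} Starting from $\bm e_0 = \bm 0$ and iterating $\bm e_{n+1}=G\bm e_n+\bm\theta_n$, a direct induction gives
\begin{equation*}
\bm e_n \;=\; \sum_{k=0}^{n-1} G^{n-1-k}\bm\theta_k.
\end{equation*}
\textbf{Step 2: Diagonalizing.} Using $G=WDW^{-1}$, every power factorizes as $G^{m}=WD^{m}W^{-1}$, so
\begin{equation*}
\bm e_n \;=\; W\sum_{k=0}^{n-1} D^{n-1-k}\,W^{-1}\bm\theta_k.
\end{equation*}
The point of this rewriting is that the hypotheses of the lemma give us direct control on $W$ and on the combination $W^{-1}\bm\theta_k$, rather than on $\bm\theta_k$ in isolation, which is precisely what is needed to pull bounds through the similarity.

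\textbf{Step 3: Bounding powers of $D$.} Because $D$ is diagonal with diagonal entries equal to the eigenvalues of $G$, and $\rho(G)\le 1$, one has $\|D^{m}\|=\max_i|\lambda_i(G)|^{m}\le 1$ for every $m\ge 0$. Taking the Euclidean norm in Step~2 and applying submultiplicativity together with the triangle inequality yields
\begin{equation*}
\|\bm e_n\| \;\le\; \|W\|\sum_{k=0}^{n-1}\|D^{n-1-k}\|\,\|W^{-1}\bm\theta_k\| \;\le\; n\,\|W\|\,\max_{0\le k< n}\|W^{-1}\bm\theta_k\|.
\end{equation*}
\textbf{Step 4: Inserting the hypotheses.} The assumptions $\|W\|=O(\Delta t^{q})$ and $\|W^{-1}\bm\theta_k\|=O(\Delta t^{p})$, together with $n\le N=T/\Delta t$, combine to give
\begin{equation*}
\|\bm e_n\| \;\le\; \frac{T}{\Delta t}\cdot O(\Delta t^{q})\cdot O(\Delta t^{p}) \;=\; O(\Delta t^{q+p-1}),
\end{equation*}
which is the claimed rate.

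The delicate point is Step~3: the diagonalizability built into the assumption $G=WDW^{-1}$ is precisely what prevents the appearance of polynomial-in-$n$ factors that would arise from nontrivial Jordan blocks attached to eigenvalues on the unit circle; without it, $\rho(G)\le 1$ alone would not suffice to bound $\|G^{m}\|$ uniformly in $m$. The ``loss'' of one power of $\Delta t$ relative to the local truncation rate $p$ is the usual price of summing $n=O(\Delta t^{-1})$ time steps, while the factor $\Delta t^{q}$ reflects how much the similarity transform $W$ contracts with mesh refinement, which will be quantified in the subsequent analysis of the specific DG2 scheme.
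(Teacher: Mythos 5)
Your proof is correct and follows essentially the same route as the paper's: unroll the recursion, insert the eigen-decomposition $G=WDW^{-1}$, bound $\|D^{m}\|\le\rho(G)^{m}\le 1$, and trade the factor $n\le N=T/\Delta t$ for one power of $\Delta t$. Your closing remark on why diagonalizability (rather than $\rho(G)\le 1$ alone) is the essential hypothesis is a nice addition, but the argument itself is identical.
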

\begin{proof}
Using \eqref{eq:error} recursively, we obtain
$\bm e_{n+1} = G^n \bm e_{0} + \sum_{\ell=0}^{n} G^{\ell} \bm \theta_{n-\ell}$.
Now, using that $\bm e_0 = \bm 0$ and the decomposition $G=WDW^{-1}$, we get
\begin{equation}\label{eq:relation}
    \bm e_{n+1} = W \sum_{\ell=0}^{n} D^{\ell} W^{-1}\bm \theta_{n-\ell}.
\end{equation}
Equation \eqref{eq:relation} allows us to estimate 
\begin{equation*}
    \begin{split}
        \| \bm e_{n+1} \| \leq \| W \| \sum_{\ell=0}^{n} \rho(G)^{\ell} \| W^{-1}\bm \theta_{n-\ell} \| \leq \| W \| N \max_{\ell=0,\dots,N-1} \| W^{-1}\bm \theta_{\ell} \|,
    \end{split}
\end{equation*}
where we used that $\rho(G) \leq 1$. The result follows by noticing that $N=O(\Delta t^{-1})$.
\end{proof}

According to Lemma \ref{lemma:general}, the order of the method can be obtained directly from the orders of $W$ and $W^{-1}\bm \theta_{\ell}$. However, one could think that the order of $\bm \theta_{\ell}$ alone is enough to determine the order of the method. Indeed, this is the case for, e.g., the explicit Euler method which can be written in the form $\bm u_{n+1} = G \bm u_{n}$ with $G=I+\Delta t G_1$.
This particular structure of $G$ allows one to follow a standard proof where, from
$\bm e_{n+1} = \sum_{\ell=0}^{n} G^{\ell} \bm \theta_{n-\ell}$, it is possible to obtain $\| \bm e_{n+1} \| \leq N (1+ \Delta t \|G_1\|)^N \max_{\ell=0,\dots,N-1} \| \bm \theta_{\ell} \|$. 
Then, one can estimate $(1+\Delta t\|G_1\|)^N \leq \exp(\Delta tN\|G_1\|) \leq \exp(T\|G_1\|)=: C$ to get $\| \bm e_{n+1} \| \leq C N \max_{\ell=0,\dots,N-1} \| \bm \theta_{\ell} \|$, and hence
$$
\bm \theta_{n} =O(\Delta t^p) \; \Rightarrow \; \bm e_{n} =O(\Delta t^{p-1}).
$$
Thus, the order of the method is obtained directly from the order of $\bm \theta_{\ell}$.
However, if $G$ has a more general structure, like e.g., $G=G_0+\Delta t G_1$ with $G_0 \neq I$, the above argument cannot be applied. In this case, Lemma \ref{lemma:general} says that the orders of $W^{-1}\bm \theta_{\ell}$ and $W$ are sufficient to obtain the order of the method. 
Moreover, as we are going to see in the next sections, the order of the method cannot be determined from the order of $\bm \theta_{\ell}$ only, and the effect of $W$ and $W^{-1}$ will become apparent.
Finally, we wish to remark that the orders $\Delta t^q$ and $\Delta t^p$ of $W$ and $W^{-1}$ are generally related. In fact, a direct calculation leads to $\| W^{-1} \| = \sup_{\|{\bf x}\|=1}\frac{1}{\| W {\bf x} \|}$. Thus, if $W = O(\Delta t^p)$ then $W^{-1} = O(\Delta t^q)$ with $q$ at least equal to $-p$. However, assuming that
$W = \Delta t^p \sum_j \Delta t^j W_j$ with 
$\ker W_0 \cap \ker W_1 \cap \cdots \cap \ker W_{\ell} \neq \emptyset$, for some $\ell \in \mathbb{N}$, then one can find a vector ${\bf y}$ in this intersection and obtain for $\Delta t$ small that
$\| W^{-1} \| = \sup_{\|{\bf x}\|=1}\frac{1}{\| W {\bf x} \|} = \frac{1}{\| W {\bf y} \|} = O(\Delta t^{-(p+\ell)})$.
Then, it is crucial to study the dependence of $W$ and $W^{-1}$ on $\Delta t$.

In the following subsections, we investigate the structure of the matrices $A_+$ and $A_-$ for different polynomial degrees and study the corresponding accuracy and consistency.

\subsubsection{Polynomial degree \texorpdfstring{$r = 1$}{r=1}}\label{sec:P1}
For $\mathbb{V}_n^1$ we define the basis functions 
\begin{equation}\label{basis_r1_minus}
\psi_{n,1}(t) = \frac{t_{n} - t}{\Delta t}, \quad \psi_{n,2}(t) = \frac{t - t_{n-1}}{\Delta t}, \quad t \in [t_{n-1},t_n].
\end{equation}
By substituting the expressions \eqref{basis_r1_minus} into \eqref{eq:a_m}-\eqref{eq:a_p} we obtain the matrices
\begin{align*}
    A_- = \frac{1}{4\Delta t^2}\begin{bmatrix}
                  - 2s\lambda - 4 &
         -4\lambda\Delta t^2  + 2s\lambda + 4 \\
          2s\lambda + 4 &
         - 2s\lambda - 4 
    \end{bmatrix},
\end{align*}
\begin{align*}
    A_+ = \frac{1}{4\Delta t^2}\begin{bmatrix}
         2\lambda \Delta t^2 - 2s\lambda + 4 & 
         -2\lambda\Delta t^2 + 2s\lambda -4 \\
         2\lambda \Delta t^2  + 2s\lambda - 4 & 
         2\lambda\Delta t^2 - 2s\lambda + 4
         \end{bmatrix}.
\end{align*}
Thus, the matrix $G=-A_+^{-1}A_-$ is
\begin{equation}\label{eq:matrixG_P1}
    G = \begin{bmatrix}
         0 & 1 \\
         \frac{2+\lambda s}{\lambda s - \Delta t^2 \lambda -2} & 
         \frac{\Delta t^2 \lambda -4}{\lambda s - \Delta t^2 \lambda -2} \\
         \end{bmatrix}.
\end{equation}
Hence, \eqref{eq:simple_local_system2} implies that
\begin{equation}\label{eq:general_stepP1}
\begin{split}
    u_{n,2}&=u_{n+1,1}, \\
    (2+s\lambda) u_{n,1} + (\lambda \Delta t^2 - 4) u_{n,2} &= (s\lambda - \lambda \Delta t^2 -2) u_{n+1,2} .
\end{split}
\end{equation}
Notice that the equation $u_{n,2}=u_{n+1,1}$ represents the continuity of the DG solution between the adjacent slabs $I_n$ and $I_{n+1}$. 
The meaning of the second equation in \eqref{eq:general_stepP1} will be clarified in what follows (see Theorem \ref{thm:Newmark}).

Let us now, study consistency and accuracy.
A direct calculation using a Taylor expansion allows us to obtain
\begin{equation}\label{eq:thetaNewmarkdG}
    \bm \theta_{n} =
    \begin{bmatrix}
        0 \\
        \frac{48 \Delta t^2 (\ddot{u}(t_n)+\lambda u(t_n)) 
        -8 \Delta t^3 \lambda (\dddot{u}(t_n)s-3\dot{u}(t_n))
        -48 \dot{u}(t_n) \Delta t \lambda s + 4 \Delta t^4 (3\ddot{u}(t_n) \lambda+\dddot{u}(t_n))
        + O(\Delta t^5) }{24 \lambda \Delta t^2  - 24 (\lambda s -2)}
    \end{bmatrix}.
\end{equation}
Now, recalling \eqref{eq:modelProblem}, we have that $\ddot{u}(t_n)+\lambda u(t_n)=0$.
Thus, it follows that $\bm \theta_{n} = O(\Delta t)$ for $s = O(1)$,
$\bm \theta_{n} = O(\Delta t^2)$ for $s = O(\Delta t)$, 
and $\bm \theta_{n} = O(\Delta t^3)$ for $s = O(\Delta t^2)$.
Notice also that choosing $s$ with higher order does not increase the consistency order.
Thus, we focus on the case $s = O(\Delta t^2)$ and set $s=a \Delta t^2$.
Using $s=a \Delta t^2$ in \eqref{eq:general_stepP1}, we get
\begin{equation}\label{eq:general_stepP1_2}
\begin{split}
    u_{n,2}&=u_{n+1,1}, \\
    (2+a \Delta t^2\lambda) u_{n,1} + (\lambda \Delta t^2 - 4) u_{n,2} &= (a \Delta t^2\lambda - \lambda \Delta t^2 -2) u_{n+1,2} .
\end{split}
\end{equation}
Moreover, we notice the following two particular cases:
\begin{equation}\label{eq:cases}
    \| \bm \theta_{n} \| = 
    \begin{cases}
        \frac{|6 \Delta t^3 \lambda \dot{u}(t_n)
        + \Delta t^4 (3\ddot{u}(t_n) \lambda+\dddot{u}(t_n)) + O(\Delta t^5)|}{12+6 \lambda \Delta t^2} &\text{if $s=0$ ($a=0$)},\\
        \frac{|\Delta t^4 (3\ddot{u}(t_n) \lambda+\dddot{u}(t_n)) + O(\Delta t^6)|}{12+3 \lambda \Delta t^2} &\text{if $s=\frac{\Delta t^2}{2}$  ($a=\frac{1}{2}$)}.\\
    \end{cases}
\end{equation}
This implies that the method is first order if $a=0$,
and second order if $a=\frac{1}{2}$.

Now, we show an equivalence result between $\mathbb{P}^1$-DG and the Newmark methods reviewed in section~\ref{sec:review:Newmark}.
For this purpose, we notice that choosing $f(u)=\lambda u$ the Newmark formula \eqref{eq:Newmark} can be written as
\begin{small}
$\begin{bmatrix}
    u_{n+1} \\ v_{n+1} \\
\end{bmatrix} 
= G_{\rm New}
\begin{bmatrix}
    u_{n} \\ v_{n} \\
\end{bmatrix}$
\end{small}, 
where $$G_{\rm New} = 
    \begin{bmatrix}
   \frac{-(1-2\beta) \Delta t^2 \lambda +2}{2\beta \Delta t^2 \lambda+2}
   &\frac{2\Delta t}{2\beta \Delta t^2 \lambda+2} \\
  \frac{(\gamma-2\beta)\Delta t^3 \lambda^2-2\Delta t \lambda}{2\beta \Delta t^2 \lambda+2}
  &\frac{((\gamma-2\beta) \Delta t^3 \lambda+(2\beta-2\gamma) \Delta t^2 \lambda+2}{2\beta \Delta t^2 \lambda+2} \\
  \end{bmatrix}.$$
We can now prove our first result.

\begin{theorem}[$\mathbb{P}^1$-DG is a Newmark scheme]\label{thm:Newmark}
Let $s=a \Delta t^2$. The scheme \eqref{eq:general_stepP1}, equivalently 
\eqref{eq:general_stepP1_2}, is a Newmark scheme with $\beta = (1-a)/2$
and $\gamma=1-a$.
In particular, the DG vector $(u_{n,1},u_{n,2})^\top$ can be mapped into the Newmark vector $(u_n,v_n)^\top$ by the matrix
\begin{equation*}
    T = \begin{bmatrix}
        0 & 1 \\
        \frac{-(a+(1-a)/2-1/2)\Delta t^2 \lambda-1}{\Delta t}
        &
        \frac{(a+(1-a)/2-1) \Delta t^2 \lambda+1}{\Delta t} \\
    \end{bmatrix},
\end{equation*}
and the similarity relation $G = T^{-1} G_{\rm New} T$ holds.
\end{theorem}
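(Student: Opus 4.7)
The plan is to verify the matrix identity $GT = T G_{\rm New}$, which is equivalent to the stated similarity once $T$ is invertible. A direct computation of $\det T$ with the given entries yields $\det T = \tfrac{1}{\Delta t} + \tfrac{a \Delta t \lambda}{2}$, which is nonzero for $\Delta t$ small enough, so invertibility is not an obstacle. Working with $GT = T G_{\rm New}$ avoids explicitly inverting $T$ and reduces the claim to four scalar rational identities in $(\Delta t, \lambda, a)$.

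Before plunging into the algebra, I would motivate the map $T$ by the physical identification of DG coefficients with nodal values. Since $\psi_{n,1}$ and $\psi_{n,2}$ are the standard left/right hat functions on $I_n$, the coefficients $u_{n,1}$ and $u_{n,2}$ coincide with the one-sided DG limits $u_{DG}(t_{n-1}^+)$ and $u_{DG}(t_n^-)$. The natural identification with the Newmark displacement is $u_n = u_{n,2}$, which is exactly the first row of $T$. The second row of $T$ should then define a Newmark velocity $v_n$ as a finite-difference-like combination of $u_{n,1},u_{n,2}$ plus a $\Delta t\lambda$-scale correction depending on $a$; its specific form is uniquely determined by insisting that the Newmark update for $u_{n+1}$ agree with the second equation of \eqref{eq:general_stepP1_2} after using the continuity relation $u_{n,2}=u_{n+1,1}$ and the identification $u_{n+1}=u_{n+1,2}$.

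With $T$ in hand, I would specialize both sides and compare. Substituting $s = a\Delta t^2$ into \eqref{eq:matrixG_P1} gives the explicit DG $G$; substituting $\beta = (1-a)/2$ and $\gamma = 1-a$ into $G_{\rm New}$ gives the corresponding Newmark matrix. Then I would compute $GT$ and $TG_{\rm New}$ as $2\times 2$ matrices and match entries, or equivalently substitute the $T$-defined $v_n$ into both Newmark recursions in \eqref{eq:Newmark} and check that the resulting two equations reduce exactly to the two equations in \eqref{eq:general_stepP1_2}.

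The main obstacle is purely algebraic bookkeeping, with one pitfall worth flagging: the model \eqref{eq:modelProblem} is $\ddot u + \lambda u = 0$, so when applying the Newmark scheme \eqref{eq:Newmark} we must set $f(u) = -\lambda u$, and the resulting signs must be tracked consistently through $G_{\rm New}$. Once the sign convention is fixed, no analytic idea is required beyond polynomial manipulation, and the identity $GT = TG_{\rm New}$ follows by a clearing-of-denominators computation in $(\Delta t, \lambda, a)$.
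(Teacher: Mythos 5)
Your proposal is correct in substance, but it proves the theorem by a different route than the paper. You take the stated $T$, $\beta=(1-a)/2$, $\gamma=1-a$ as given, check $\det T = \tfrac{1}{\Delta t}+\tfrac{a\Delta t\lambda}{2}\neq 0$, and verify the intertwining identity $GT = TG_{\rm New}$ by direct polynomial computation; together with matching initial data this yields the trajectory equivalence by induction. The paper instead \emph{derives} the parameter correspondence: it uses the continuity relation $u_{n,2}=u_{n+1,1}$ to rewrite the DG step as a three-term recurrence in the single nodal variable $w_n$ (equation \eqref{eq:general_stepP1_3}), eliminates the velocity from the Newmark scheme to obtain the analogous three-term recurrence \eqref{eq:Newmark_order2}, and reads off $\beta$ and $\gamma$ by matching coefficients; the second row of $T$ is then recovered by solving the first Newmark equation for $v_n$, and the similarity is checked last. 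The paper's elimination argument explains where $\beta=(1-a)/2$ and $\gamma=1-a$ come from, whereas your verification presupposes them; conversely, your route is more self-contained as a proof of the stated similarity and correctly flags the one real pitfall, namely that $f(u)=-\lambda u$ for the model problem \eqref{eq:modelProblem}, so the signs in $G_{\rm New}$ must be tracked accordingly. Your informal remark that the second row of $T$ is ``uniquely determined by insisting that the Newmark update for $u_{n+1}$ agree with the second DG equation'' is essentially the paper's derivation in disguise. One indexing point you should make explicit when writing this up: the DG vector $(u_{n,1},u_{n,2})^\top$ on the slab $I_n=(t_{n-1},t_n]$ carries the nodal values $(u_{n-1},u_n)$ in Newmark labelling, so the relation $TG=G_{\rm New}T$ is indeed the correct one to verify, but the first component of $T(u_{n,1},u_{n,2})^\top$ equals $u_{n,2}=u_n$, not $u_{n,1}$.
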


\begin{proof}
Because of the continuity condition $u_{n,2}=u_{n+1,1}$, we can introduce the notation $w_n=u_{n,2}=u_{n+1,1}$, $w_{n-1}=u_{n,1}$, and $w_{n+1}=u_{n+1,2}$. 
Therefore, by a simple manipulation, the second equation in \eqref{eq:general_stepP1_2} is equivalent to
\begin{equation}\label{eq:general_stepP1_3}
    w_{n+1}-2w_{n}+w_{n-1}+ \lambda \Delta t^2 \Bigl( \frac{1-a}{2}w_{n+1} +\frac{1}{2} w_{n} +\frac{a}{2} w_{n-1}\Bigr) =0.
\end{equation}
Now, we consider the Newmark scheme \eqref{eq:Newmark} for \eqref{eq:modelProblem}, and rewrite it as a single equation in only the $u_n$ variables. To do so, we first write the first equation in \eqref{eq:Newmark}  for two consecutive time steps as
\begin{equation*}
\begin{split}
u_{n+1} - u_{n} &= \Delta t \ v_n + \Delta t^2 \ \Bigl[ - \Bigl(\frac{1}{2}-\beta \Bigr) \lambda u_n - \beta \lambda u_{n+1} \Bigr], \\ 
u_{n} - u_{n-1} &= \Delta t \ v_{n-1} + \Delta t^2 \ \Bigl[ - \Bigl(\frac{1}{2}-\beta \Bigr) \lambda u_{n-1} - \beta \lambda u_n \Bigr]. \\ 
\end{split}
\end{equation*}
By subtracting these two equations we get
\begin{equation*}
    u_{n+1} - 2 u_{n} + u_{n-1} = \Delta t \ (v_n-v_{n-1})
    + \Delta t^2 \ 
    \Bigl[ -\beta \lambda u_{n+1} - \Bigl( \frac{1}{2} - 2\beta \Bigr) \lambda u_n + \Bigl( \frac{1}{2} - \beta \Bigr) \lambda u_{n-1} \Bigr].
\end{equation*}
Now, using the second equation in \eqref{eq:Newmark}, written as
$$v_{n} - v_{n-1} = \Delta t \ [-(1-\gamma) \lambda u_{n-1} - \gamma \lambda u_n],$$ 
we obtain
\begin{equation}\label{eq:Newmark_order2}
    u_{n+1} - 2 u_{n} + u_{n-1} + \lambda \Delta t^2 \ 
    \Bigl[ \beta u_{n+1} + \Bigl( \frac{1}{2} - 2\beta + \gamma \Bigr) u_n + \Bigl( \frac{1}{2} + \beta - \gamma \Bigr) u_{n-1} \Bigr] = 0.
\end{equation}
By comparing \eqref{eq:Newmark_order2} with \eqref{eq:general_stepP1_3}, we see that \eqref{eq:general_stepP1_3} is a Newmark scheme with $\beta = (1-a)/2$ and $\gamma=1-a$.
In particular, we proved that $u_{n} = w_{n} = u_{n,2}$ for all $n$. 
This corresponds to the first equation in $T (u_{n,1},u_{n,2})^\top=(u_n,v_n)^\top$.
To obtain the second equation of $T (u_{n,1},u_{n,2})^\top=(u_n,v_n)^\top$, one can solve the first equation in \eqref{eq:Newmark} for $v_n$, replace the result into the second equation and use the relations $u_{n} = u_{n,2}$ and $u_{n+1} = u_{n,1}$.
Finally, the similarity relation $G = T^{-1} G_{\rm New} T$ can be obtained by a direct calculation.
\end{proof}

According to the discussion provided in Section \ref{sec:review:Newmark}, the Newmark scheme is 
second-order accurate if and only if $\gamma=\frac{1}{2}$, it is stable if $\gamma \geq \frac{1}{2}$, while the choice of $\beta=\frac{1}{4}$ provides conservation properties.
Using Theorem \ref{thm:Newmark}, we can immediately transfer this properties to our DG scheme. 
Since $\gamma=\frac{1}{2}$ is obtained only for $a=\frac{1}{2}$ ($s=\frac{\Delta t^2}{2}$), which also corresponds to $\beta=\frac{1}{4}$, we obtain 
that our DG scheme is first-order accurate
for all values of $a$ different from $\frac{1}{2}$. 
This is also in agreement with \eqref{eq:cases}.
Thus, the choice $a=0$ (that is $s=0$) corresponds to a first-order convergent method.

Even though one can benefit from the analogy with the class of Newmark methods, we prove convergence of our DG scheme using the general results obtained in Section \ref{sec:algebraic}.
This will be useful to get a detailed analysis and compare the $\mathbb{P}^1$ and $\mathbb{P}^2$ cases.

\begin{lemma}[Eigen-decomposition of $G$ for $\mathbb{P}^1$]\label{lemma:stability}
    Consider the matrix $G$ defined in \eqref{eq:matrixG_P1}
    and let $s:=a \Delta t^2$.
    The corresponding eigen-decomposition is $G=WDW^{-1}$ with
    \begin{equation}
        W = \begin{bmatrix}
            1 & 1 \\
            \lambda_1  & \lambda_2 \\
        \end{bmatrix}
        \quad
        \text{and}
        \quad
        D = \begin{bmatrix}
            \lambda_1 & 0\\
            0 & \lambda_2 
        \end{bmatrix},
    \end{equation}
    with $\lambda_{1,2} = \frac{\pm\Delta t \sqrt{(4a^2-4a+1) \Delta t^2 \lambda^2-16\lambda}+\Delta t^2 \lambda-4}{2(a-1) \Delta t^2 \lambda-4}$.
    Moreover, it holds that
    \begin{itemize}\itemsep0em
        \item[\rm (a)] If $a=\frac{1}{2}$, then $\rho(G) = 1$ for all $\Delta t$ and $\lambda$.
        \item[\rm (b)] If $a=0$, then $\rho(G) < 1$ for all $\Delta t$ and $\lambda$.
        \item[\rm (c)] If $a\in \bigl(\frac{1}{2}-\frac{2}{\Delta t \sqrt{\lambda}},\frac{1}{2}\bigr)$, then $\rho(G) < 1$. If $a\in \bigl(\frac{1}{2},\frac{1}{2}+\frac{2}{\Delta t \sqrt{\lambda}}\bigr)$, then $\rho(G) > 1$.
        \item[\rm (d)] $W = W_0 + O(\Delta t) = O(1)$, with $W_0 = 
            \begin{bmatrix}
                1 & 1 \\
                1 & 1 \\
            \end{bmatrix}
       $.
    \end{itemize}
\end{lemma}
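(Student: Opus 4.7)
The plan is to proceed in three stages: (i) derive the eigen-decomposition $G = W D W^{-1}$ by inspection of the companion-like structure, (ii) analyze the spectral radius in cases (a)--(c) by computing $\det G$ explicitly, and (iii) Taylor-expand the eigenvalues around $\Delta t = 0$ to prove (d).

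First I would observe that the matrix $G$ in \eqref{eq:matrixG_P1} has first row $(0,1)$, hence its characteristic polynomial is the quadratic
\begin{equation*}
\mu^2 - \tfrac{\Delta t^2\lambda - 4}{(a-1)\Delta t^2 \lambda - 2}\,\mu - \tfrac{2+a\Delta t^2\lambda}{(a-1)\Delta t^2 \lambda - 2} = 0,
\end{equation*}
after substituting $s = a \Delta t^2$. A direct application of the quadratic formula gives the two roots $\lambda_{1,2}$ stated in the lemma, and for each root the vector $(1,\lambda_i)^\top$ is an eigenvector because $G (1,\lambda_i)^\top = (\lambda_i, \lambda_i^2)^\top$ and $\lambda_i$ satisfies the characteristic equation. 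Assembling the two eigenvectors as columns yields $W$ and $D$. Note that $W$ is invertible whenever $\lambda_1 \neq \lambda_2$, which holds in each of the regimes treated below.

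For (a)--(c) I would compute $\det G$ directly from \eqref{eq:matrixG_P1}, obtaining
\begin{equation*}
\lambda_1\lambda_2 = \det G = \frac{2+a\Delta t^2 \lambda}{2+(1-a)\Delta t^2 \lambda}.
\end{equation*}
The discriminant under the square root in $\lambda_{1,2}$ equals $\lambda\bigl[(2a-1)^2\Delta t^2 \lambda - 16\bigr]$, which is negative precisely when $|a-\tfrac{1}{2}| < 2/(\Delta t\sqrt{\lambda})$. In that regime $\lambda_1, \lambda_2$ are complex conjugates, so $\rho(G)^2 = \lambda_1\overline{\lambda_1} = \det G$. From the closed form of $\det G$ one reads off immediately: $\det G = 1$ when $a=\tfrac{1}{2}$ (giving (a) for \emph{all} $\Delta t,\lambda$, since $a=\tfrac{1}{2}$ lies in the complex regime), $\det G < 1$ when $a < \tfrac{1}{2}$ and $\det G > 1$ when $a > \tfrac{1}{2}$, which yields (c). For (b), the case $a=0$ requires an additional argument because for $\Delta t^2 \lambda \geq 16$ the discriminant becomes nonnegative and the eigenvalues are real. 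Setting $x := \Delta t^2\lambda$, the two real roots are $(\mp\sqrt{x^2-16x} + 4 - x)/(2x+4)$, and an elementary estimate $\sqrt{x^2-16x} < x+8$ (equivalent to $-32x < 64$) together with $\sqrt{x^2-16x} < x-4$ for $x\geq 16$ shows both absolute values are strictly less than $1$. The main subtlety here is to remember to check this real-eigenvalue subcase; the complex-eigenvalue subcase follows as above from $\det G < 1$.

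Finally for (d) I would Taylor-expand $\lambda_{1,2}$ at $\Delta t = 0$. The radicand equals $\lambda(-16) + O(\Delta t^2)$, so its square root expands as $4i\sqrt{\lambda} + O(\Delta t^2)$, and a short computation gives
\begin{equation*}
\lambda_{1,2} = \frac{\pm 4i\,\Delta t\sqrt{\lambda} + \Delta t^2 \lambda - 4 + O(\Delta t^3)}{-4 + O(\Delta t^2)} = 1 \mp i \Delta t \sqrt{\lambda} + O(\Delta t^2).
\end{equation*}
Therefore both columns of $W$ converge to $(1,1)^\top$ as $\Delta t \to 0$, proving the expansion $W = W_0 + O(\Delta t)$ with $W_0$ as stated. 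The technical obstacle is the same throughout the argument: the matrix $W_0$ is singular, so although $W = O(1)$, $W^{-1}$ is \emph{not} $O(1)$, a fact that is not claimed here but will be essential when combining this lemma with Lemma~\ref{lemma:general} in the following sections.
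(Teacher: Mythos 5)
Your proposal is correct, and while it follows the same overall skeleton as the paper (characteristic polynomial of the companion-type matrix, case split on the sign of the discriminant, Taylor expansion for (d)), the two central spectral-radius estimates are carried out by genuinely different and arguably cleaner means. For the complex-conjugate regime you use the identity $\rho(G)^2=\lambda_1\overline{\lambda_1}=\lambda_1\lambda_2=\det G=\frac{2+a\Delta t^2\lambda}{2+(1-a)\Delta t^2\lambda}$, from which (a) and (c) are read off immediately by comparing $a$ with $1-a$; the paper instead substitutes each value of $a$ into the explicit formula for $\lambda_{1,2}$ and verifies a modulus inequality case by case (e.g.\ reducing $|\lambda_{1,2}|<1$ to $-4x^4-8x^2<0$ for $a=0$). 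For the real-eigenvalue subcase of (b) you bound both roots via the elementary surd inequalities $\sqrt{x^2-16x}<x+8$ and $\sqrt{x^2-16x}<x-4$, whereas the paper evaluates $\rho(G)^2$ at the endpoint, computes its limit at infinity, and shows $\frac{d(\rho(G)^2)}{dx}>0$ to conclude by monotonicity. Your determinant argument buys a uniform treatment of (a) and (c) without messy algebra, and your surd estimates avoid differentiating a complicated rational-radical expression; the paper's monotonicity argument gives slightly more information (how $\rho(G)$ varies with $x$). Two minor points worth noting: both you and the paper leave aside the degenerate boundary $\Delta t^2\lambda=16$ (for $a=0$), where $\lambda_1=\lambda_2$ and the stated eigen-decomposition fails even though $\rho(G)=1/3<1$ there, and your closing remark that $W_0$ is singular (so $W^{-1}\neq O(1)$) is not part of the lemma but correctly anticipates why Lemma~\ref{lemma:general} must track $W^{-1}\bm\theta_n$ rather than $\bm\theta_n$ alone.
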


\begin{proof}
    The matrices $V$ and $D$ can be obtained by a direct calculation.
    Let us now prove point {\rm (a)}.
    If we set $a=\frac{1}{2}$, then $\lambda_{1,2} = \frac{4 - \lambda \Delta t^2 \pm i 4 \Delta t \sqrt{\lambda}}{4 + \lambda \Delta t^2}$, which have both unit modulus.
    Let us now focus on point {\rm (b)}. By setting $a=0$ and using the change of variables $x= \Delta t \sqrt{\lambda}$, the eigenvalues of $G$ become $\lambda_{1,2} = \frac{\pm x \sqrt{ x^2-16}+x-4}{-2 x-4}$.
    We distinguish two cases: $x\leq 4$ and $x>4$.
    In the first case, the two eigenvalues are complex conjugate and thus have the same modulus. A direct calculation reveals that
    $|\lambda_{1,2}| < 1$ if and only if $-4 x^4-8 x^2 < 0$, which holds for all $x>0$ (and thus for all $\Delta t$ and $\lambda$).
    In the second case, one has $\rho(G) = \frac{|x \sqrt{ x^2-16}+x^2-4|}{|2 x^2+4|}$.
    Taking the square, one can compute that $\rho(G)^2=1/9<1$ for $x=4$ and that $\lim_{x \rightarrow \infty} \rho(G)^2=1$. Moreover, a direct calculation allows us to get
    \begin{equation*}
        \frac{d (\rho(G)^2)}{dx} = \frac{(x\sqrt{x^2-16}+x^2-4)^2}{(-2x^2-4)^2}
        \frac{\frac{2 x^4+4 x^2}{\sqrt{x^2-16}}+4x^3+\sqrt{x^2-16}(2x^2+4)+4x}{(x^2+2)},
    \end{equation*}
    which is clearly positive for all $x>4$.
    Thus, the spectral radius $\rho(G)$ is monotonically increasing and bounded by $1$.
    Let us now focus on point {\rm (c)}.
    We consider only the case $(4a^2-4a+1)x \leq 16$, which is equivalent to $a \in \bigl[\frac{1}{2}-\frac{2}{x},\frac{1}{2}+\frac{2}{x} \bigr]$. 
    In this case, the two eigenvalues are complex conjugate and have the same modulus, and a direct calculation reveals that $|\lambda_{1,2}| < 1$ if and only if
    $(-8 a^2+12 a-4) x^2+16 a-8 < 1$, which holds for all $a \in \bigl(\frac{1}{2}-\frac{2}{x},\frac{1}{2}\bigr)$. 
    Thus, $\rho(G)< 1$ for $a \in \bigl(\frac{1}{2}-\frac{2}{x},\frac{1}{2}\bigr)$
    and $\rho(G)>1$ for $a \in \bigl(\frac{1}{2},\frac{1}{2}+\frac{2}{x} \bigr)$.
    Finally, point {\rm (d)} follows by a direct Taylor expansion of $W$.
\end{proof}


Now, we can finally come back to the consistency and accuracy of our DG method.

\begin{theorem}[Consistency and accuracy for $\mathbb{P}^1$]
    Consider the DG scheme \eqref{eq:error}-\eqref{eq:simple_local_system2} with $s=a \Delta t^2$. Then $\bm \theta_{n} =O(\Delta t^{p+1})$, $W^{-1}\bm \theta_{n}=O(\Delta t^p)$, and $\bm e_{n} =O(\Delta t^{p-1})$ with $p=2$ for $a\in \{0\} \cup \bigl(\frac{1}{2}-\frac{2}{\Delta t \sqrt{\lambda}},\frac{1}{2}\bigr)$ and $p=3$ for $a=\frac{1}{2}$.
\end{theorem}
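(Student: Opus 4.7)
The plan is to combine Lemma \ref{lemma:general} with the explicit eigen-decomposition from Lemma \ref{lemma:stability}. Since Lemma \ref{lemma:stability}(d) gives $W = O(1)$, i.e., $q = 0$ in the notation of Lemma \ref{lemma:general}, the error bound reduces to $\bm e_n = O(\Delta t^{p-1})$ as soon as $W^{-1}\bm\theta_n = O(\Delta t^{p})$ and the zero-stability hypothesis $\rho(G)\leq 1$ is verified. The stability part is already available: by Lemma \ref{lemma:stability}(a)–(c), we have $\rho(G)\leq 1$ exactly on the range of $a$ listed in the statement, namely $a = 0$, $a = \tfrac{1}{2}$, and $a \in \bigl(\tfrac{1}{2}-\tfrac{2}{\Delta t \sqrt{\lambda}},\tfrac{1}{2}\bigr)$.

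Next, I would read off the order of $\bm\theta_n$ from the Taylor expression \eqref{eq:thetaNewmarkdG}. Since $\bm\theta_n$ has the structure $\bm\theta_n = (0,\theta_{n,2})^\top$, only the scalar $\theta_{n,2}$ matters. Using the ODE constraint $\ddot{u}(t_n) + \lambda u(t_n) = 0$ to kill the $\Delta t^2$-term in the numerator and specializing $s = a\Delta t^2$, the case $a=0$ retains the surviving term $6\Delta t^3 \lambda \dot{u}(t_n)$, so $\bm\theta_n = O(\Delta t^3)$, i.e., $p+1 = 3$. The regime $a \in \bigl(\tfrac{1}{2}-\tfrac{2}{\Delta t \sqrt{\lambda}},\tfrac{1}{2}\bigr)$ yields the same leading order. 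For $a = \tfrac{1}{2}$, the additional cancellation recorded in \eqref{eq:cases} removes this $\Delta t^3$ contribution and gives $\bm\theta_n = O(\Delta t^4)$, so $p+1 = 4$.

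The key step is the computation of $W^{-1}\bm\theta_n$. Since $W$ is a $2\times 2$ Vandermonde matrix in the eigenvalues $\lambda_{1,2}$ from Lemma \ref{lemma:stability}, the sparsity of $\bm\theta_n$ gives
\begin{equation*}
W^{-1}\bm\theta_n \;=\; \frac{\theta_{n,2}}{\lambda_2-\lambda_1}\begin{bmatrix} -1 \\ \phantom{-}1 \end{bmatrix}.
\end{equation*}
A direct Taylor expansion of the closed-form eigenvalues produces $\lambda_2-\lambda_1 = -2 i \Delta t \sqrt{\lambda} + O(\Delta t^2)$, hence the denominator is exactly of order $\Delta t$. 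Combining with the orders of $\theta_{n,2}$ found above yields $W^{-1}\bm\theta_n = O(\Delta t^{p})$ with $p = 2$ in the first case and $p = 3$ in the case $a = \tfrac{1}{2}$. Plugging these into Lemma \ref{lemma:general} closes the argument.

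The main obstacle, and the reason the refined framework of Lemma \ref{lemma:general} is needed rather than a standard truncation argument, is exactly the one flagged in the discussion following Lemma \ref{lemma:general}: the matrix $W$ degenerates to the rank-one $W_0$ as $\Delta t \to 0$, so $W^{-1}$ blows up like $\Delta t^{-1}$, eating one order of consistency. This loss must be compensated by the ODE-induced cancellation in $\bm\theta_n$, and the bookkeeping is only tractable because $\bm\theta_n$ has a vanishing first component — which is why the division by $\lambda_2-\lambda_1$ (rather than a potentially worse $\Delta t^{-2}$ factor from $W^{-1}$) is what appears.
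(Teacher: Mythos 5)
Your proposal is correct and follows essentially the same route as the paper: combine Lemma \ref{lemma:general} with the spectral information of Lemma \ref{lemma:stability}, read the order of $\bm\theta_n$ from \eqref{eq:thetaNewmarkdG} after using $\ddot{u}(t_n)+\lambda u(t_n)=0$, and then track the extra $\Delta t^{-1}$ coming from $W^{-1}$. The only (harmless) difference is presentational: the paper Taylor-expands $W^{-1}\bm\theta_n$ directly to exhibit its leading terms, whereas you exploit the vanishing first component of $\bm\theta_n$ and the Vandermonde form of $W$ to reduce everything to $\theta_{n,2}/(\lambda_2-\lambda_1)$ with $\lambda_2-\lambda_1=O(\Delta t)$ exactly — the same computation, organized slightly more transparently.
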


\begin{proof}
    The result follows from Lemma \ref{lemma:general} and Lemma \ref{lemma:stability} once the estimates for the consistency errors are obtained.
    Using \eqref{eq:thetaNewmarkdG} with $s=a \Delta t^2$ (and recalling that $\ddot{u}(t_n)+\lambda u(t_n)=0$),
    one obtains
    \begin{equation*}
    \bm \theta_{n} =
    \begin{bmatrix}
        0 \\
        \frac{ 
        8 \Delta t^3 \lambda (3-6a)\dot{u}(t_n)
        + 4 \Delta t^4 (3\ddot{u}(t_n) \lambda+\dddot{u}(t_n))
        + O(\Delta t^5) }{24 \lambda \Delta t^2 (1-a) + 48}
    \end{bmatrix}.
    \end{equation*}
    Therefore, $\bm \theta_{n} = O(\Delta t^3)$ for all considered values of $a$, and $\bm \theta_{n} = O(\Delta t^4)$ for $a=\frac{1}{2}$.
    Similarly, a direct calculation using a Taylor expansion leads to
    \begin{equation*}
    W^{-1}\bm \theta_{n} =
    \begin{bmatrix}
        \frac{(2a-1)\dot{u}(t_n) \Delta t^2\sqrt{\lambda}}{-4 i} 
        + \frac{\Delta t^3}{192} 
        \bigl( \frac{24 \ddot{x}(t_n) \lambda + 8 \dddot{x}(t_n)}{i \sqrt{\lambda}} + C_1(a)\lambda^3 + C_2(a)\lambda^2 \bigr)
        + O(\Delta t^4) \\
        \frac{(2a-1)\dot{u}(t_n) \Delta t^2\sqrt{\lambda}}{4 i} 
        - \frac{\Delta t^3}{192}
        \bigl( \frac{24 \ddot{x}(t_n) \lambda + 8 \dddot{x}(t_n)}{i \sqrt{\lambda}} + C_1(a)\lambda^3 + C_2(a)\lambda^2 \bigr)
        + O(\Delta t^4) \\
    \end{bmatrix},
    \end{equation*}
    where $i$ is the imaginary unit, and
    $$
    C_1(a) = \frac{12u(t_n)(a- a^2) -3 u(t_n)}{i \lambda^{3/2}}
    \quad \text{and} \quad 
    C_2(a) = \frac{12\ddot{u}(t_n)(a- a^2) -3 \ddot{u}(t_n)}{i \lambda^{3/2}}.
    $$
    Thus, $W^{-1} \bm \theta_{n} = O(\Delta t^2)$ and $W^{-1} \bm \theta_{n} = O(\Delta t^3)$ for $a=\frac{1}{2}$.
\end{proof}

\subsubsection{Polynomial degree \texorpdfstring{$r=2$}{r=2}}\label{sec:P2}
For $\mathbb{V}_{n}^2$ we consider the basis functions 
\begin{align}\label{basis_r2_minus}
\psi_{n,1}(t) & = -\frac{2( t -  t_{n-\frac12})(t_n-t)}{\Delta t^2}, \quad \psi_{n,2}(t) = \frac{4(t - t_{n-1})(t_n-t)}{\Delta t^2}, \nonumber \\ \psi_{n,3}(t) & = \frac{2(t_{n-1}-t)( t_{n-\frac12} - t )}{\Delta t^2},   \quad t \in [t_{n-1},t_n].
\end{align}
Replacing \eqref{basis_r1_minus} into \eqref{eq:a_m}-\eqref{eq:a_p}, we get
\begin{align*}
    A_- = \frac{1}{12 \Delta t^2}
    \begin{bmatrix}
         3(6s\lambda+12)&  -12(6s\lambda +12) & -3(4\lambda\Delta t^2 - 18 s\lambda -36) \\
        -12(2s\lambda +4) & 48(2s\lambda+4)  & -36(2s\lambda+4) \\
        3(2s\lambda+4) & -12(2s\lambda +4) & 9(2s\lambda+4)
    \end{bmatrix},
\end{align*}
\begin{align*}
    A_+ = \frac{1}{12 \Delta t^2}
    \begin{bmatrix}
         3(2\lambda\Delta t^2 -18s\lambda+20)&  -4(2\lambda \Delta t^2-18s\lambda+12) & (2\lambda\Delta t^2 - 18 s\lambda -12) \\
        4(2\lambda\Delta t^2 +18 s\lambda -36) & -48(2s\lambda-4)  & -4(2\lambda\Delta t^2-6s\lambda+12) \\
        - (2\lambda\Delta t^2 + 18 s\lambda -84) & 4(2\lambda\Delta t^2 + 6 s\lambda -36) & 
        3(2\lambda\Delta t^2 -2 s\lambda +20)
    \end{bmatrix}.
\end{align*}
Thus, the matrix $G=-A_+^{-1}A_-$ is given by
\begin{equation*}
    G = 
    \begin{bmatrix}
        0 & 0 & 1 \\
        -\frac{(3\Delta t^2 \lambda^2+36 \lambda)s+6\Delta t^2 \lambda+72}{2d} 
        & \frac{s+12 \Delta t^2 \lambda+144}{d} 
        & -\frac{36\lambda s-\Delta t^4 \lambda^2+12 \Delta t^2 \lambda+360}{2d}  \\
        \frac{(3 \Delta t^2 \lambda^2-36\lambda)s+6\Delta t^2\lambda-72}{d} 
        & -\frac{(12 \Delta t^2 \lambda^2-144 \lambda)s+24 \Delta t^2 \lambda-288}{d}
        & -\frac{72 \lambda s+\Delta t^4\lambda^2-48 \Delta t^2 \lambda+288}{d}\\
    \end{bmatrix},
\end{equation*}
where $d = (9 \Delta t^2 \lambda^2+36 \lambda)s-\Delta t^4 \lambda^2-6 \Delta t^2 \lambda-72$.
As for the $\mathbb{P}^1$ case, we notice that the first row of $G$ corresponds to the continuity condition ($u_{n,3}=u_{n+1,1}$).

Now, we proceed as in Section~\ref{sec:P1} and study $\bm \theta_{n}$ by a Taylor expansion of $u(t)$:
\begin{equation*}
\bm \theta_{n} =
\begin{bmatrix}
    0 \\
    \frac{ \dot{u}(t_n) s \lambda \Delta t }{s \lambda - 2} + O(\Delta t^3) \\
    \frac{2 \dot{u}(t_n) s \lambda \Delta t }{s \lambda - 2} + O(\Delta t^3) \\
\end{bmatrix}.
\end{equation*}
Thus, to get that $\bm \theta_{n}$ is at least of order $\Delta t^3$, one needs that $s=O(\Delta t^2)$. Hence, as for the $\mathbb{P}^1$ case, we set $s=a\Delta t^2$ and study the method with respect to the parameter $a$. In this case, the consistency error $\bm \theta_{n}$ becomes
\begin{equation}
\label{eq:theta_P2}
\bm \theta_{n} =
\begin{bmatrix}
    0 \\
    -\frac{ (8\dot{u}(t_n) a \lambda - \dddot{u}(t_n) )\Delta t^3 }{16} 
    + \frac{(4 u(t_n) \lambda^2-5 \dddot{u}(t_n) )\Delta t^4}{384}+ O(\Delta t^5) \\
    -\frac{ ((12a-2)\dot{u}(t_n)  \lambda - 3\dddot{u}(t_n) )\Delta t^3 }{16} 
    + \frac{(4 u(t_n) \lambda^2- \dddot{u}(t_n) )\Delta t^4}{96}+ O(\Delta t^5) \\
\end{bmatrix},
\end{equation}
which shows that there is no value of $a$ eliminating simultaneously the third-order terms of $\bm \theta_{n}$.
In particular, in view of the relation $\frac{d}{dt}(u(t_n)  \lambda + 3\ddot{u}(t_n)) =0$, the choice $a=-1/8$ cancels the third-order term in the second component of $\bm \theta_{n}$, but not the one in the third component. 
Similarly, the choice $a=-1/12$ cancels the third-order term in the third component of $\bm \theta_{n}$, but not the one in the second component. 
Therefore, the analysis of $\bm \theta_{n}$ does not provide any information on the choice of $a$: for all values of $a$ one gets that $\bm \theta_{n} = O(\Delta t^3)$.

To better understand the role of the parameter $a$, we recall the framework of Lemma~\ref{lemma:general} (used already in Section~\ref{sec:P1}) and study $W^{-1}\bm \theta_{n}$.
To do so, we first need to compute the eigen-decomposition $G=WDW^{-1}$ obtained by the matrices
\begin{equation}\label{eq:eigendecompositionP2}
    D = \begin{bmatrix}
        0 & 0 & 0 \\
        0 & \lambda_1 & 0 \\
        0 & 0 & \lambda_2 \\
    \end{bmatrix}
    \quad \text{and} \quad
    W = \begin{bmatrix}
        1 & 1 & 1 \\
        \frac{1}{4} & \widehat{\lambda}_1 & \widehat{\lambda}_2 \\
        0 & \lambda_1 & \lambda_2 \\
    \end{bmatrix}
\end{equation}
with
\begin{equation}\label{eq:eigenvaluesP2}
\begin{split}
    \lambda_{1,2} &= 
    \frac{(6a-1)\Delta t^4\lambda^2+60\Delta t^2\lambda-144
    }{(18a-2)\Delta t^4\lambda^2+(72a-12)\Delta t^2\lambda-144} \\
    & \qquad \qquad \pm
    \frac{\Delta t \sqrt{a_4\Delta t^6\lambda^4+a_3\Delta t^4\lambda^3+a_2\Delta t^2\lambda^2+a_1\lambda}
    }{(18a-2)\Delta t^4\lambda^2+(72a-12)\Delta t^2\lambda-144}, \\
    \widehat{\lambda}_{1,2} &= 
    \frac{(1-6a)\Delta t^6\lambda^3-(288a+24)\Delta t^4 \lambda^2-(864a+432)\Delta t^2\lambda-3456
    }{(36a-4)\Delta t^6 \lambda^3+(24-288a)\Delta t^4\lambda^2-1728a\Delta t^2\lambda+3456} \\
    &\qquad \qquad \mp
    \frac{(\Delta t^3 \lambda+12\Delta t)\sqrt{{a}_4\Delta t^6\lambda^4+{a}_3\Delta t^4\lambda^3+{a}_2\Delta t^2 \lambda^2 + {a}_1\lambda}
    }{(36a-4)\Delta t^6 \lambda^3+(24-288a)\Delta t^4\lambda^2-1728a\Delta t^2\lambda+3456}, \\
\end{split}
\end{equation}
where 
\begin{equation*}
    a_4 = 144a^2-24a+1, \, a_3 = 1728a^2+720a-144, \,a_2 = 5184a^2+3456, \, a_1 = -20736.
\end{equation*}
Now, using again a Taylor expansion, we obtain by a direct calculation that
\begin{equation}\label{eq:Vtheta_P2}
    W^{-1}\bm \theta_{n} =
    \begin{bmatrix}
        \frac{(4 \dot{u}(t_n)+3\dddot{u}(t_n))\Delta t^3}{12} + O(\Delta t^4) \\
        -\frac{i\sqrt{\lambda}((2a-1)\lambda\dot{u}(t_n)-\dddot{u}(t_n) )\Delta t^2}{4 \lambda} 
        - \frac{C_1(a)\Delta t^3}{48\lambda} 
        + O(\Delta t^4) \\
        -\frac{i\sqrt{\lambda}((-2a+1)\lambda\dot{u}(t_n)+\dddot{u}(t_n) )\Delta t^2}{4 \lambda}
        - \frac{C_2(a)\Delta t^3}{48\lambda} 
        + O(\Delta t^4) \\
    \end{bmatrix},
    \end{equation}
where
\begin{small}
\begin{equation*}
\begin{split}
    C_1(a) &= [8\dot{u}(t_n)+(-3\sqrt{\lambda}ia^2+6\sqrt{\lambda} i a+4\sqrt{\lambda}i)u(t_n)]\lambda^2 \\
    &+[(-3\sqrt{\lambda}ia^2+6\sqrt{\lambda}ia)\ddot{u}(t_n)+6\dddot{u}(t_n)]\lambda-2\sqrt{\lambda}i\ddddot{u}(t_n), \\
    C_2(a) &= [8\dot{u}(t_n)-(-3\sqrt{\lambda}ia^2+6\sqrt{\lambda} i a+4\sqrt{\lambda}i)u(t_n)]\lambda^2 \\
    &+[(3\sqrt{\lambda}ia^2-6\sqrt{\lambda}ia)\ddot{u}(t_n)+6\dddot{u}(t_n)]\lambda+2\sqrt{\lambda}i\ddddot{u}(t_n). \\
\end{split}
\end{equation*}
\end{small}

\noindent
In view of the relation $\frac{d}{dt}(u(t_n)  \lambda + 3\ddot{u}(t_n)) =0$, we observe from \eqref{eq:Vtheta_P2} that $a=0$ is the only value that can make $W^{-1}\bm \theta_{n}$ of order $\Delta t^3$.
This is in contrast with the fact that some nonzero values of $a$ can improve the order of some components of $\bm \theta_{n}$.
This observation shows very clearly that, to study our DG methods the key quantity to look at is $W^{-1}\bm \theta_{n}$, according to the framework of Lemma~\ref{lemma:general}.
A study of $\bm \theta_{n}$ only can not be sufficient for fully understanding the behavior of the method.
We next summarize our findings, where we focus on three different choices of $a$:
$a=0$, making $W^{-1}\bm \theta_{n}$ of order $\Delta t^3$, and $a=-1/8$ and $a=-1/12$, making some components of $\bm \theta_{n}$ of order $\Delta t^4$.

\begin{lemma}[Eigen-decomposition of $G$ for $\mathbb{P}^2$]\label{lemma:stability:P2}
The eigen-decomposition of $G$ is $G=WDW^{-1}$ obtained by the matrices given in \eqref{eq:eigendecompositionP2}.
For all values $a=0,-1/8,-1/12$ it holds that $\rho(G)\leq 1$ for all small enough values of $\sqrt{\lambda}\Delta t$, and $W=O(1)$.
\end{lemma}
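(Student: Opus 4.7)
The plan is to dissect the lemma into three claims and handle them in turn: the algebraic identity $G=WDW^{-1}$, the boundedness $W=O(1)$, and the spectral-radius bound $\rho(G)\le 1$. The first two are essentially verifications; the bulk of the work and the only real subtlety lie in the third.

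For the eigen-decomposition, I would start from the observation that the first row of $G$ is $(0,0,1)$, encoding the continuity relation $u_{n,3}=u_{n+1,1}$ already highlighted in the text. This immediately implies that $(1,1/4,0)^\top$, the first column of $W$, lies in $\ker G$, accounting for the $0$ in $D$. The nontrivial two-dimensional invariant complement has a characteristic polynomial that is quadratic in $\lambda$ with coefficients rational in $x:=\Delta t\sqrt\lambda$; the two roots $\lambda_{1,2}$ in \eqref{eq:eigenvaluesP2} come from the quadratic formula, and the corresponding eigenvectors (whose middle components are $\widehat\lambda_{1,2}$) are obtained by solving $(G-\lambda_i I){\bf w}_i={\bf 0}$. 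In practice I would simply verify $GW=WD$ symbolically, column by column, rather than redo the derivation.

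For $W=O(1)$, Taylor-expanding \eqref{eq:eigenvaluesP2} around $x=0$ does the job. The crucial observation is that the radicand equals $\lambda(a_1+a_2 x^2+a_3 x^4+a_4 x^6)$ with $a_1=-20736=-144^2<0$, so $\sqrt{\,\cdot\,}=144\,i\sqrt\lambda\,(1+O(x^2))$; multiplied by the prefactor $\Delta t$ this contributes $\pm 144\,ix\,(1+O(x^2))$ to the numerator. Since both numerator and denominator evaluate to $-144$ at $x=0$, one reads off $\lambda_{1,2}=1\mp ix+O(x^2)$, and analogously that $\widehat\lambda_{1,2}$ admits a finite limit. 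Every entry of $W$ is therefore uniformly bounded as $x\to 0$.

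The spectral-radius bound is the main step. Since $0$ is always an eigenvalue, it suffices to prove $|\lambda_{1,2}|\le 1$. Because $\lambda_{1,2}$ are complex conjugates whose real part is even and imaginary part is odd in $x$, $|\lambda_{1,2}|^2$ is analytic in $x^2$, and the Taylor expansion takes the shape
\begin{equation*}
  |\lambda_{1,2}|^2 = 1 + a\, x^2 + O(x^4).
\end{equation*}
For $a=-1/8$ and $a=-1/12$ the coefficient of $x^2$ is strictly negative, so $|\lambda_{1,2}|^2<1$ for all sufficiently small $x$ and the bound is immediate. The genuine obstacle is the borderline case $a=0$, where the $x^2$ term vanishes and one must go to order $x^4$. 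Here I would substitute $a_3=-144$, $a_4=1$ (their values at $a=0$) into \eqref{eq:eigenvaluesP2} and expand carefully; the outcome is $\lambda_{1,2}=e^{\mp i x}-\tfrac{1}{144}x^4+O(x^5)$, whence $|\lambda_{1,2}|^2=1-\tfrac{1}{72}x^4+O(x^5)\le 1$ for small $x$. This careful bookkeeping of the $x^4$ coefficient at $a=0$ is the only step that requires real care; everything else reduces to inspecting closed-form formulas already given in \eqref{eq:eigenvaluesP2}.
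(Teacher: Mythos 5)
Your proposal is correct and takes essentially the same route as the paper: both arguments observe that for small $x=\sqrt{\lambda}\,\Delta t$ the radicand is negative, so $\lambda_{1,2}$ are complex conjugates with equal modulus, and then compare $|\lambda_{1,2}|^2$ with $1$. The only difference is one of execution: where you Taylor-expand $|\lambda_{1,2}|^2-1$ to leading order ($a\,x^2$ for $a=-1/8,-1/12$ and $-x^4/72$ for $a=0$), the paper computes the sign of the exact polynomial difference of numerator and denominator (for $a=0$, $-4x^8-24x^6-288x^4\le 0$), which gives the same conclusion in the regime claimed by the lemma.
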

\begin{proof}
    The eigen-decomposition $G=WDW^{-1}$ and the fact that $W=O(1)$ can be obtained by direct calculations (and a Taylor expansion).
    Let us now study the eigenvalues of $a$.
    For $a=0$, and introducing the (positive) variable $x=\Delta t \sqrt{\lambda}$, we have
    \begin{equation*}
\begin{split}
    \lambda_{1,2} &= 
    \frac{-x^4+60x^2-144
    \pm x \sqrt{x^6-144x^4 + 3456x^2-20736}
    }{-2x^4-12x^2-144}.
\end{split}
\end{equation*}
Now, for $x$ small enough, it holds that
$x^6-144x^4 + 3456x^2-20736 \leq 0$.
In this case, the eigenvalues $\lambda_1$ and $\lambda_2$ are complex conjugates and have the same modulus. A direct calculation shows that $|\lambda_{1,2}| \leq 1$ if and only if
$-4 x^8-24 x^6-288 x^4\leq 0$, which holds for all $x \geq 0$.
The same arguments can be used to obtain the result for $a=-1/12$ and $a=-1/8$.
\end{proof}

\begin{remark}[Spectral radius of $\rho(G)$]
    The result of Lemma~\ref{lemma:stability:P2} on the spectral radius $\rho(G)$ could be improved.
    In particular, one has that $\rho(G) \leq 1$ for $a\leq0$ and any values of $\Delta t$ and $\lambda$,
    as shown in Figure~\ref{fig:rho_P2} (left).
    \begin{figure}
        \centering
        \includegraphics[scale=0.4]{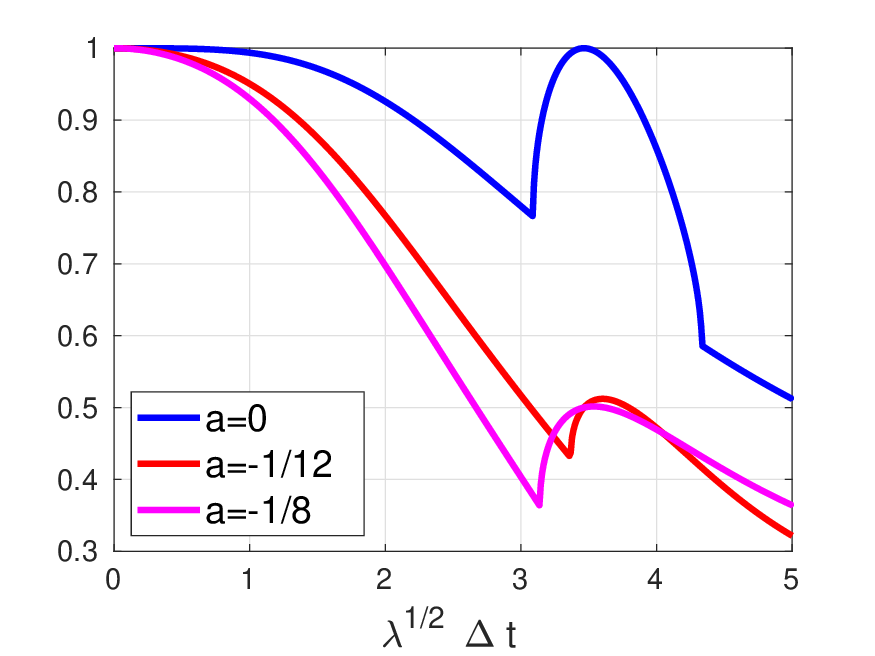}
        \includegraphics[scale=0.4]{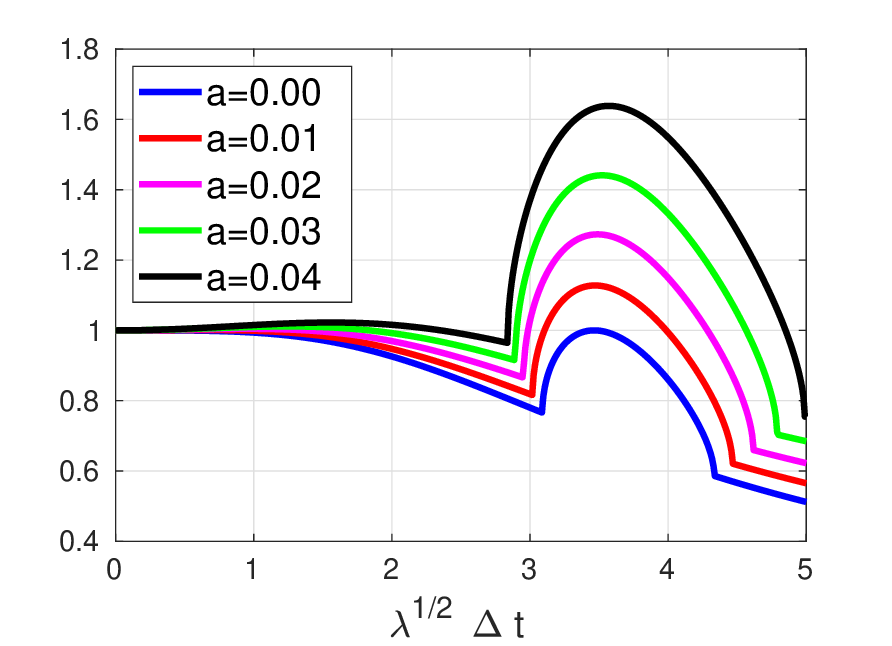}
        \caption{Spectral radius of $G$ as a function of $\sqrt{\lambda}\Delta t$ for different values of $a$.}
        \label{fig:rho_P2}
    \end{figure}
    Instead, if $a >0$ the spectral radius $\rho(G)$ becomes positive, meaning that the method is unstable.
    However, proving these results requires the study of third-order polynomials, like $x^6-144x^4 + 3456x^2-20736$ shown in the proof of Lemma~\ref{lemma:stability:P2}. In this case, for example, the corresponding roots can be obtained by using Cardano's formula. However, this approach leads to complicated formulas and their study is beyond the scope of this work.
\end{remark}

\begin{theorem}[Consistency and accuracy for $\mathbb{P}^2$]
Consider the DG scheme \eqref{eq:error}-\eqref{eq:simple_local_system2} with $s=a \Delta t^2$. Then, under the assumptions of Lemma~\ref{lemma:stability:P2}, it holds that
$\bm \theta_{n} =O(\Delta t^{3})$ for any value of $a \in \mathbb{R}$, $W^{-1}\bm \theta_{n}=O(\Delta t^2)$ for any $a \in \mathbb{R}$ and $W^{-1}\bm \theta_{n}=O(\Delta t^3)$ for $a=0$. Correspondingly, $\bm e_{n} =O(\Delta t^{p})$ with $p=1$ for $a = -1/12$ and $a=-1/8$
and $p=2$ for $a=0$.
\end{theorem}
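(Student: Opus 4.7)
The plan is to read off the global error $\bm e_n$ from Lemma~\ref{lemma:general}, which requires only three ingredients: the spectral bound $\rho(G)\leq 1$, the order $q$ with $W=O(\Delta t^q)$, and the order $p$ with $W^{-1}\bm\theta_n = O(\Delta t^p)$. The first two are provided directly by Lemma~\ref{lemma:stability:P2}: $\rho(G)\leq 1$ for $\sqrt{\lambda}\Delta t$ small enough, and $W=O(1)$, so $q=0$. The fact that $\bm\theta_n = O(\Delta t^3)$ for every $a\in\mathbb{R}$ is immediate from \eqref{eq:theta_P2}, since the third-order terms in the second and third components cannot be simultaneously cancelled. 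Therefore, the entire proof reduces to pinning down the precise order of $W^{-1}\bm\theta_n$ as a function of $a$.

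The key calculation is the Taylor expansion of $W^{-1}\bm\theta_n$ already displayed in \eqref{eq:Vtheta_P2}. I would obtain it in two steps: first invert the $3\times 3$ eigenvector matrix $W$ in \eqref{eq:eigendecompositionP2} symbolically and expand each entry in powers of $\Delta t$ using \eqref{eq:eigenvaluesP2}; then multiply by $\bm\theta_n$ from \eqref{eq:theta_P2}. The leading $\Delta t^2$ contribution in the second and third components takes the form $\pm i\sqrt{\lambda}\,((2a-1)\lambda\dot u(t_n)-\dddot u(t_n))\Delta t^2/(4\lambda)$. At this point I invoke the model ODE \eqref{eq:modelProblem}: differentiating $\ddot u(t)+\lambda u(t)=0$ once gives $\dddot u(t_n)=-\lambda\dot u(t_n)$, so the parenthesis collapses to $2a\lambda\dot u(t_n)$. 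Hence the $\Delta t^2$ term vanishes if and only if $a=0$. For $a=0$ the order of $W^{-1}\bm\theta_n$ is pushed up to $\Delta t^3$ by the next term in \eqref{eq:Vtheta_P2}, while for $a\neq 0$ (in particular $a=-1/8$ and $a=-1/12$) it stays at $\Delta t^2$.

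Finally, applying Lemma~\ref{lemma:general} with $q=0$ yields $\bm e_n = O(\Delta t^{q+p-1}) = O(\Delta t^{p-1})$, giving the target order $p=1$ for $a\in\{-1/8,-1/12\}$ and $p=2$ for $a=0$. The main obstacle in this program is the bookkeeping of the symbolic expansion of $W^{-1}$: the eigenvalues in \eqref{eq:eigenvaluesP2} hide behind a square root of a degree-six polynomial in $\sqrt{\lambda}\Delta t$, so each entry of $W^{-1}$ must be expanded cleanly up to the order needed, and one must verify that the surviving $\Delta t^3$ coefficient for $a=0$ does not accidentally vanish (which would push the order of $\bm e_n$ up further than claimed). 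The contrast between the analysis of $\bm\theta_n$ alone, which would tempt one into choosing $a=-1/8$ or $a=-1/12$ to cancel a single third-order component, and the correct analysis via $W^{-1}\bm\theta_n$, which singles out $a=0$, is precisely the phenomenon that motivates the framework of Lemma~\ref{lemma:general}.
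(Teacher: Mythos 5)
Your proposal is correct and follows essentially the same route as the paper: the paper's proof is precisely the combination of Lemma~\ref{lemma:stability:P2} (for $\rho(G)\leq 1$ and $W=O(1)$), the expansions \eqref{eq:theta_P2} and \eqref{eq:Vtheta_P2} simplified via $\dddot{u}(t_n)=-\lambda\dot{u}(t_n)$ to single out $a=0$, and Lemma~\ref{lemma:general} with $q=0$. Your observation that the leading parenthesis in \eqref{eq:Vtheta_P2} collapses to $2a\lambda\dot{u}(t_n)$ is exactly the argument the paper uses in the discussion preceding the theorem.
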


\begin{proof}
    The proof follows from Lemma~\ref{lemma:stability:P2}, Lemma~\ref{lemma:general} and the discussion above.
\end{proof}

From the results obtained in this section, we can clearly state that, if one considers a second-order problem of the form \eqref{eq:modelProblem}, it is more appropriate to use a DG method with $\mathbb{P}^1$ elements, rather than $\mathbb{P}^2$.
In fact, an appropriate choice of the parameter $a$ leads the $\mathbb{P}^1$-DG to second-order accuracy, which is the best accuracy that one can obtain with $\mathbb{P}^2$.

\subsubsection{Polynomial degree \texorpdfstring{$r=3$}{r=3}}\label{sec:P3}
For $\mathbb{V}_{n}^3$ we consider the set of basis functions 
\begin{align}\label{basis_r3_minus}
\psi_{n,1}(t) & = \frac{(t_{n}-3t+2t_{n-1})(2t_{n}-3t+t_{n-1})(t_{n}-t)}{2\Delta t^3}, \nonumber \\ 
\psi_{n,2}(t) & = 9\frac{(t - t_{n-1})(2t_{n}-3t+t_{n-1})(t_{n}-t)}{2\Delta t^3}, \nonumber \\ 
\psi_{n,3}(t) & = - 9\frac{(t - t_{n-1})(t_{n}-3t+2t_{n-1})(t_{n}-t)}{2\Delta t^3}, \nonumber \\
\psi_{n,4}(t) & = \frac{(t-t_{n-1})(t_{n}-3t+2t_{n-1})(2t_{n}-3t+t_{n-1})}{2\Delta t^3}, \quad t \in [t_{n-1},t_{n}]. \nonumber
\end{align}
In this case, one can proceed as in the previous sections to estimate $\bm \theta_n$, $W^{-1} \bm \theta_n$, and $\bm e_n$. 
This leads to an estimate $\bm \theta_n = O(\Delta t^4)$, $W^{-1} \bm \theta_n = O(\Delta t^4$) and thus by Lemma~\ref{lemma:general}, to $\bm e_n=O(\Delta t^3)$. If one chooses $s=a\Delta t^2$ or $s=a\Delta t^3$, these orders cannot be improved by an appropriate choice of $a$, exactly as in the $\mathbb{P}^2$ case.

\subsection{DG2 as GLMs}\label{sec:dG:second:GLM}
In this section, we discuss the relation between the DG method introduced at the beginning of section~\ref{sec:dG:second} and the GLMs of section~\ref{sec:review:GLM}.
In particular, we have seen in section~\ref{sec:algebraic} that DG methods for the second-order equation \eqref{eq:modelProblem} can be written in the form ${\bf u}_{n+1}=G{\bf u}_{n}$, where $G=-A_+^{-1}A_-$. Now, the matrix $G$ generally has a special structure (see sections \ref{sec:P1}, \ref{sec:P2} and \ref{sec:P3}), which is used to prove that the main result of this section is given by the following theorem.
\begin{theorem}[DG and GLM]
    Assume that the determinant of $A_+$ has the form $d=\alpha_0 + \sum_{k=1}^N \lambda^k \Delta t^{2k} \alpha_k$, for some coefficients $\alpha_k \in \mathbb{R}$ independent of $\Delta t$ and $\lambda$. Moreover, assume that $G \in \mathbb{R}^{(r+1) \times (r+1)}$ has the form 
    \begin{equation}\label{eq:GLM:G}
        G=\frac{1}{d} \Bigl( G_0 - \sum_{k=1}^N \lambda^k \Delta t^{2k} G_k \Bigr),
    \end{equation}
    where the $G_k$ are independent of $\Delta t$ and $\lambda$.
    Then the iteration ${\bf u}_{n+1}=G{\bf u}_{n}$ is equivalent to a GLM \eqref{eq:GLM} with ${\bf y}^{[n]} \in \mathbb{R}^{2(r+1)}$ such that $({\bf y}^{[n]})_{1:r+1} = {\bf u}_{n}$, and 
    \begin{equation}\label{eq:dG:GLM:matrices1}
        A= \begin{bmatrix}
            \frac{\alpha_1}{\alpha_0}I_{r+1} & \frac{\alpha_2}{\alpha_0}I_{r+1} & \cdots & \frac{\alpha_{N-1}}{\alpha_0}I_{r+1} & \frac{\alpha_N}{\alpha_0}I_{r+1} \\
            -I_{r+1} & 0 & \cdots & 0 & 0 \\
            0 & -I_{r+1} & \cdots & 0 & 0 \\
            \vdots &  & \ddots & & \vdots \\
            0 & 0 & \cdots & -I_{r+1} & 0 \\
        \end{bmatrix},
        \quad
        U = \begin{bmatrix}
            \frac{1}{\alpha_0}G_0 & I_{r+1} \\
             0 & 0 \\
             \vdots & \vdots \\
             0 & 0 \\
        \end{bmatrix}
    \end{equation}
    \begin{equation}\label{eq:dG:GLM:matrices2}
        B= \begin{bmatrix}
            \frac{\alpha_1}{\alpha_0}I & \frac{\alpha_2}{\alpha_0}I_{r+1} & \cdots & \frac{\alpha_{N-1}}{\alpha_0}I_{r+1} & \frac{\alpha_N}{\alpha_0}I \\
            \frac{1}{\alpha_0}G_1 & \frac{1}{\alpha_0}G_2 & \cdots & \frac{1}{\alpha_0}G_{N-1} & \frac{1}{\alpha_0}G_N \\
        \end{bmatrix},
        \quad
        V = \begin{bmatrix}
            \frac{1}{\alpha_0}G_0 & I_{r+1} \\
             0 & 0 \\
        \end{bmatrix},
    \end{equation}
    where $I_{r+1} \in  \mathbb{R}^{(r+1) \times (r+1)}$ is the identity matrix.
\end{theorem}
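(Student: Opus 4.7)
The plan is to identify the first block of ${\bf y}^{[n]} = ({\bf y}^{[n]}_1, {\bf y}^{[n]}_2)^\top \in \mathbb{R}^{2(r+1)}$ with ${\bf u}_n$ and to show that ${\bf y}^{[n]}_2$ is not a free variable, but rather an auxiliary block forced by the GLM dynamics to satisfy an invariant that, combined with the stage equations, reproduces the DG iteration ${\bf u}_{n+1} = G{\bf u}_n$. Partition the stage vector as $Y = (Y_1,\dots,Y_N)^\top$ with $Y_i \in \mathbb{R}^{r+1}$. Since the model problem \eqref{eq:modelProblem} is linear with $f(u) = -\lambda u$, we have $\Delta t^2 f(Y_k) = -\lambda \Delta t^2 Y_k$, and this linearity is precisely what will couple the successive stages through powers of $\lambda \Delta t^2$.

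First, I exploit the subdiagonal $-I_{r+1}$ blocks of $A$ together with the zero blocks in rows $2,\dots,N$ of $U$. These rows of \eqref{eq:GLM2_A} give the cascade $Y_i = -\Delta t^2 f(Y_{i-1}) = \lambda \Delta t^2 Y_{i-1}$ for $i = 2,\dots,N$, hence by induction
\begin{equation*}
Y_k = (\lambda \Delta t^2)^{k-1} Y_1,\qquad \Delta t^2 f(Y_k) = -\lambda^k \Delta t^{2k} Y_1,\qquad k=1,\dots,N.
\end{equation*}
Substituting these into the first block row of \eqref{eq:GLM2_A} and using the assumed form of $d$, the coefficient of $Y_1$ on the left collapses to $d/\alpha_0$, yielding
\begin{equation*}
Y_1 \;=\; \frac{1}{d}\bigl(G_0\,{\bf y}^{[n]}_1 + \alpha_0\,{\bf y}^{[n]}_2\bigr).
\end{equation*}

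Next, I read off the update. The first block row of $(B,V)$ coincides with the first block row of $(A,U)$, so \eqref{eq:GLM2_B} gives directly ${\bf y}^{[n+1]}_1 = Y_1$. The second block row of $(B,V)$, combined with the cascade, yields
\begin{equation*}
{\bf y}^{[n+1]}_2 \;=\; \frac{1}{\alpha_0}\sum_{k=1}^N G_k\,\Delta t^2 f(Y_k) \;=\; -\frac{1}{\alpha_0}\sum_{k=1}^N \lambda^k \Delta t^{2k}\, G_k\, {\bf y}^{[n+1]}_1,
\end{equation*}
which is exactly the invariant $\alpha_0\,{\bf y}^{[n+1]}_2 = -\sum_{k=1}^N \lambda^k \Delta t^{2k} G_k\,{\bf y}^{[n+1]}_1$.

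Finally, initializing ${\bf y}^{[0]} = \bigl({\bf u}_0,\,-\tfrac{1}{\alpha_0}\sum_{k=1}^N \lambda^k \Delta t^{2k} G_k\,{\bf u}_0\bigr)^\top$ (a standard starting procedure for GLMs, cf.\ the discussion before \eqref{eq:GLM:input}), the invariant holds at $n=0$ and is propagated by the scheme. Plugging the invariant into the closed-form expression for $Y_1$ at step $n$ gives
\begin{equation*}
Y_1 \;=\; \frac{1}{d}\Bigl(G_0 - \sum_{k=1}^N \lambda^k \Delta t^{2k}\, G_k\Bigr){\bf y}^{[n]}_1 \;=\; G\,{\bf y}^{[n]}_1,
\end{equation*}
hence ${\bf u}_{n+1} = {\bf y}^{[n+1]}_1 = G\,{\bf u}_n$, matching the DG iteration \eqref{eq:simple_local_system2}. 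The main obstacle, in my view, is conceptual rather than computational: one has to recognize that ${\bf y}^{[n]}_2$ is not independent data but a memory block encoding, through the invariant above, all the higher-order $\lambda^k \Delta t^{2k}$ contributions of the rational matrix $G$. Once this invariant is identified and used to fix the starting value, the remaining block-algebra is routine.
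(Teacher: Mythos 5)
Your proposal is correct and follows essentially the same route as the paper's proof: you use the same stage cascade $Y_k=\lambda\Delta t^2 Y_{k-1}$, the same identification of the second block of ${\bf y}^{[n]}$ with $-\frac{1}{\alpha_0}\sum_k\lambda^k\Delta t^{2k}G_k{\bf u}_n$, and the same collapse of the stage coefficient to $d/\alpha_0$; the only difference is that you verify that the stated GLM reproduces ${\bf u}_{n+1}=G{\bf u}_n$, whereas the paper constructs the GLM from that iteration. Your explicit remark about initializing ${\bf y}^{[0]}$ so that the invariant propagates is a welcome clarification of a point the paper leaves implicit.
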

\begin{proof}
    Using the expression of $d$ and \eqref{eq:GLM:G}, the formula ${\bf u}_{n+1}=G{\bf u}_{n}$ becomes
    \begin{equation}\label{eq:GLM:dG:1}
        {\bf u}_{n+1} 
        + \sum_{k=1}^N \lambda^k \Delta t^{2k} \frac{\alpha_k}{\alpha_0} {\bf u}_{n+1} 
        = \frac{1}{\alpha_0} G_0 {\bf u}_{n}
        - \sum_{k=1}^N \lambda^k \Delta t^{2k} \frac{1}{\alpha_0} G_k{\bf u}_{n}.
    \end{equation}
    Now to write \eqref{eq:GLM:dG:1} in the form \eqref{eq:GLM}, we introduce the (explicit) variables ${\bf y}^{[n]} \in \mathbb{R}^{2(r+1)}$,
    \begin{equation}\label{eq:GLM:dG:2}
        \begin{split}
        ({\bf y}^{[n]})_{1:r+1} &:= {\bf u}_{n}, \\
        ({\bf y}^{[n]})_{r+1:2(r+1)} &:= - \sum_{k=1}^N \frac{\lambda^k \Delta t^{2k}}{\alpha_0} G_k {\bf u}_{n},
        \end{split}
    \end{equation}
    and define the (implicit) variable $Y_1$ via the equation
    \begin{equation}\label{eq:GLM:dG:3}
        Y_1
        + \sum_{k=1}^N \lambda^k \Delta t^{2k} \frac{\alpha_k}{\alpha_0} Y_1 
        = \frac{1}{\alpha_0} G_0 ({\bf y}^{[n]})_{1:r+1}
        + ({\bf y}^{[n]})_{r+1:2(r+1)}.
    \end{equation}
    Next, we introduce the variables $Y_k$, $k=2,\dots,N$, as
    \begin{equation}\label{eq:GLM:dG:4}
            Y_k := \lambda \Delta t^2 Y_{k-1}, \text{ for $k=2,\dots,N$},
    \end{equation}
    which allows us to rewrite \eqref{eq:GLM:dG:3} as
    \begin{equation}\label{eq:GLM:dG:5}
        Y_1
        + \lambda \Delta t^{2} \sum_{k=1}^N \frac{\alpha_k}{\alpha_0} Y_k 
        = \frac{1}{\alpha_0} G_0 ({\bf y}^{[n]})_{1:r+1}
        + ({\bf y}^{[n]})_{r+1:2(r+1)}.
    \end{equation}
    Now, we notice that \eqref{eq:GLM:dG:1}, \eqref{eq:GLM:dG:3}, and \eqref{eq:GLM:dG:5} are equivalent and hence $Y_1 = {\bf u}_{n+1} = ({\bf y}^{[n+1]})_{1:r+1}$.
    This allows us to formulate \eqref{eq:GLM:dG:5} as
    \begin{equation}\label{eq:GLM:dG:6}
        ({\bf y}^{[n+1]})_{1:r+1}
        + \lambda \Delta t^{2} \sum_{k=1}^N \frac{\alpha_k}{\alpha_0} Y_k 
        = \frac{1}{\alpha_0} G_0 ({\bf y}^{[n]})_{1:r+1}
        + ({\bf y}^{[n]})_{r+1:2(r+1)}.
    \end{equation}
    Now, using the second line of \eqref{eq:GLM:dG:2} and \eqref{eq:GLM:dG:4}, we can write
    \begin{equation}\label{eq:GLM:dG:7}
        ({\bf y}^{[n+1]})_{r+1:2(r+1)} 
        = - \sum_{k=1}^N \frac{\lambda^k \Delta t^{2k}}{\alpha_0} G_k {\bf u}_{n+1}
        = - \sum_{k=1}^N \frac{\lambda \Delta t^{2}}{\alpha_0} G_k Y_{k}.
    \end{equation}
    The result follows by grouping together 
    \eqref{eq:GLM:dG:5}, \eqref{eq:GLM:dG:4}, \eqref{eq:GLM:dG:6}, and \eqref{eq:GLM:dG:7}.
\end{proof}

An important remark comes from the proof of the previous theorem. 
From \eqref{eq:GLM:dG:4} we get $Y_k = \lambda^k \Delta t^{2k} Y_1$. Thus, recalling that $Y_1 = {\bf u}_{n+1}$ is an approximation to $u(t)$ at the interpolation nodes $t_n+c_i \Delta t$, we see that $$(Y_k)_i = \lambda^k \Delta t^{2k} (Y_1)_i \approx  \lambda^k \Delta t^{2k} u(t_n+c_i \Delta t) = \Delta t^{2k} u^{(2k)}(t_n+c_i \Delta t).$$
Therefore, one cannot consider the analysis approach reviewed in Section \ref{sec:review:GLM} (where $Y \approx u(t_{n+1})$; see \eqref{eq:GLM:output2}).
For this reason, we present an approach that is a little different from the one considered in the literature (see, e.g., \cite{DAmbrosio2012}), but that can be applied to our DG framework. The first step is to rewrite \eqref{eq:GLM} in the equivalent form
\begin{equation}\label{eq:GLM2}
\begin{split}
    Y^{n+1} + \Delta t^2 \lambda A Y^{n+1} &= U {\bf y}^{[n]},\\
{\bf y}^{[n+1]} &= G_{\rm GLM} {\bf y}^{[n]},
\end{split}
\end{equation}
where $G_{\rm GLM} := V - \Delta t^2 \lambda B [I+\Delta t^2 \lambda A]^{-1}U$ and we introduced the superscript $n+1$ in $Y^{n+1}$ to make the dependence of $Y$ on the time step explicit.
We consider that $Y^n$ and ${\bf y}^{[n]}$ are approximations to the solution vectors $\widehat{Y}^n$ and $\widehat{{\bf y}}^{[n]}$. Now, it is natural to define the truncation errors $T^n$ and ${\bm \theta}^{[n]}$ as the residuals of the two (block) equations in \eqref{eq:GLM2} when $Y^n$ and ${\bf y}^{[n]}$ are replaced by $\widehat{Y}^n$ and $\widehat{{\bf y}}^{[n]}$,
\begin{equation}\label{eq:GLM3}
\begin{split}
    \widehat{Y}^{n+1} + \Delta t^2 \lambda A \widehat{Y}^{n+1} &= U \widehat{{\bf y}}^{[n]} + T^n,\\
\widehat{{\bf y}}^{[n+1]} &= G_{\rm GLM} \widehat{{\bf y}}^{[n]} + {\bm \theta}^{[n]}.
\end{split}
\end{equation}
Now, neglecting the issue of a starting procedure (see \cite{Butcher2006,DAmbrosio2012}), namely $Y=\widehat{Y}^0$ and ${\bf y}^{[0]}=\widehat{{\bf y}}^{[0]}$, and introducing the errors
\begin{equation*}
    E^n := \widehat{Y}^{n} - Y^{n} \text{ and } {\bf e}^{[n]} := \widehat{{\bf y}}^{[n]} - {\bf y}^{[n]},
\end{equation*}
we obtain
\begin{equation}\label{eq:GLM4}
\begin{split}
    E^{n+1} + \Delta t^2 \lambda A E^{n+1} &= U {\bf e}^{[n]} + T^n,\\
{\bf e}^{[n+1]} &= G_{\rm GLM} {\bf e}^{[n]} + {\bm \theta}^{[n]},
\end{split}
\end{equation}
and the following result.
\begin{theorem}[convergence of GLM]\label{thm:GLM_DG}
Consider \eqref{eq:GLM4} for $n=0,\dots,N-1$. 
Let $G_{\rm GLM}=WDW^{-1}$ be the eigen-decomposition of $G_{\rm GLM}$, with $D$ denoting the diagonal matrix whose entries are the eigenvalues of $G_{\rm GLM}$, and assume that $\rho(G_{\rm GLM})\leq 1$. 
If ${\bf e}^{[0]}={\bf 0}$, $E^0 = {\bf 0}$, $W=O(\Delta t^q)$, and $W^{-1} \bm \theta_n = O(\Delta t^{p+1})$ and $T^n = O(\Delta t^{a+1})$, for some $a,p \in \mathbb{N}^+$, then ${\bf e}^{[n]} = O(\Delta t^{q+p})$ and $E^n = O(\Delta t^{\min(a+1,q+p)})$.
\end{theorem}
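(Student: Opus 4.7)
The plan is to handle the two recurrences in \eqref{eq:GLM4} separately and in order. The second recurrence is decoupled from $E^n$, so I would first estimate $\mathbf{e}^{[n]}$ by exactly mimicking the argument of Lemma~\ref{lemma:general}, and then feed the resulting bound into the first recurrence to estimate $E^n$.

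For the first step, iterating the second line of \eqref{eq:GLM4} together with the assumption $\mathbf{e}^{[0]}=\mathbf{0}$ yields
\begin{equation*}
    \mathbf{e}^{[n+1]} \;=\; \sum_{\ell=0}^{n} G_{\rm GLM}^{\ell}\, \bm{\theta}^{[n-\ell]}
    \;=\; W \sum_{\ell=0}^{n} D^{\ell}\, W^{-1} \bm{\theta}^{[n-\ell]},
\end{equation*}
where the second equality uses the eigen-decomposition $G_{\rm GLM}=WDW^{-1}$. Since $\|D^{\ell}\|=\rho(G_{\rm GLM})^{\ell}\le 1$ by hypothesis and $N=O(\Delta t^{-1})$, I get
\begin{equation*}
    \|\mathbf{e}^{[n+1]}\| \;\le\; \|W\|\, N\, \max_{0\le \ell \le N-1}\|W^{-1}\bm{\theta}^{[\ell]}\|
    \;=\; O(\Delta t^{q})\cdot O(\Delta t^{-1})\cdot O(\Delta t^{p+1}) \;=\; O(\Delta t^{q+p}),
\end{equation*}
which is the first claimed estimate.

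For the second step, I would solve the first line of \eqref{eq:GLM4} for $E^{n+1}$,
\begin{equation*}
    E^{n+1} \;=\; (I+\Delta t^{2}\lambda A)^{-1}\bigl(U\,\mathbf{e}^{[n]} + T^{n}\bigr),
\end{equation*}
and argue that $(I+\Delta t^{2}\lambda A)^{-1}$ is uniformly bounded for all sufficiently small $\Delta t$ (it converges to the identity by a Neumann series argument, since $\lambda$ and $A$ are fixed). Together with $U=O(1)$ from the explicit structure in \eqref{eq:dG:GLM:matrices1} and the bound $E^{0}=\mathbf{0}$ to start the recurrence, this gives
\begin{equation*}
    \|E^{n+1}\| \;\le\; C\bigl(\|\mathbf{e}^{[n]}\| + \|T^{n}\|\bigr)
    \;=\; O(\Delta t^{q+p}) + O(\Delta t^{a+1})
    \;=\; O\bigl(\Delta t^{\min(a+1,\, q+p)}\bigr),
\end{equation*}
finishing the proof.

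The main technical obstacle is really only a careful bookkeeping of which operator is $O(1)$ and which scales with a power of $\Delta t$: the key conceptual point is that, despite the two-block structure of a GLM, the error recursion for $\mathbf{e}^{[n]}$ decouples and is driven solely by $\bm{\theta}^{[n]}$, while $E^{n}$ inherits its order from whichever of $\mathbf{e}^{[n]}$ or $T^{n}$ is larger. The $W^{-1}$ trick from Lemma~\ref{lemma:general} is essential because, as in the DG analysis of Sections~\ref{sec:P1}--\ref{sec:P2}, the naive bound $\|\bm{\theta}^{[n]}\|$ alone would overestimate the actual error order due to cancellations captured by the change of basis $W^{-1}$.
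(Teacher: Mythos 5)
Your proposal is correct and follows essentially the same route as the paper: the second (decoupled) recurrence is handled exactly as in Lemma~\ref{lemma:general} to get ${\bf e}^{[n]} = O(\Delta t^{q+p})$, and the first equation is inverted via a Neumann series so that $E^{n}$ inherits the order $O(\Delta t^{\min(a+1,q+p)})$. The only difference is presentational: you spell out the iteration argument inline rather than simply citing Lemma~\ref{lemma:general}, as the paper does.
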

\begin{proof}
    By the Neumann series and a simple manipulation of the first equation in \eqref{eq:GLM4} one gets
\begin{equation*}
    E^{n+1} = [I + \Delta t^2 \lambda A ]^{-1} ( U {\bf e}^{[n]} + T^n)  = U {\bf e}^{[n]} + T^n + O(\Delta t^2).\\
\end{equation*}
Hence, if $T^n = O(\Delta t^{a+1})$ and ${\bf e}^{[n]} = O(\Delta t^{b+1})$, for some $a,b\in\mathbb{N}_+$, then $E^{n} = O(\Delta t^{\min(a+1,b+1)})$.
Now, the second equation in \eqref{eq:GLM4} has exactly the form of \eqref{eq:error}, and the result follows from Lemma \ref{lemma:general}. 
\end{proof}

Let us now turn our attention to the truncation errors $T^n$ and $\theta^n$. For this purpose, we proceed as in Section \ref{sec:review:GLM}, consider the exact solution $\widehat{Y}_{k}^n = \Delta t^{(2k-2)} y^{(2k-2)}(t_n+ {\bf c} \Delta t)$ (namely, $\widehat{Y}_{k,i}^n = \Delta t^{2k-2} y^{(2k-2)}(t_n+c_i \Delta t)$) and assume that the exact $\widehat{{\bf y}}^{[n]}$ has the form
$$\widehat{{\bf y}}^{[n]} = \sum_{j=0}^\infty y^{(j)}(t_n) \Delta t^j {\bf q}_j,$$
for some vectors ${\bf q}_j$, $j=0,1,2,\dots$
Replacing them into the first line of \eqref{eq:GLM3},
and denoting by $\widehat{U} := \Bigl[\frac{1}{\alpha_0}G_0 \; \: I_{r+1} \Bigr]$, we can compute from the first block
\begin{equation}\label{eq:GLM:trunc:T}
\begin{split}
T_1^n &= \widehat{Y}_1^{n+1} + \Delta t^2 \lambda \sum_{k=1}^N \frac{\alpha_k}{\alpha_0} \widehat{Y}_k^{n+1} - \widehat{U} \widehat{{\bf y}}^{[n]} \\
    &= y(t_n+{\bf c} \Delta t) + \Delta t^2 \lambda \sum_{k=1}^N \frac{\alpha_k}{\alpha_0} \Delta t^{2k-2} y^{(2k-2)}(t_n+{\bf c} \Delta t) - \sum_{j=0}^\infty y^{(j)}(t_n) \Delta t^j \widehat{U} {\bf q}_j \\
    &= y(t_n+{\bf c} \Delta t) - \sum_{k=1}^N \frac{\alpha_k}{\alpha_0} \Delta t^{2k} y^{(2k)}(t_n+{\bf c} \Delta t) - \sum_{j=0}^\infty y^{(j)}(t_n) \Delta t^j \widehat{U} {\bf q}_j \\
    &= \sum_{j=0}^\infty y^{(j)}(t_n) \frac{\Delta t^j}{j!} {\bf c}^j - \sum_{k=1}^N \frac{\alpha_k}{\alpha_0} \Delta t^{2k} \sum_{j=0}^\infty y^{(2k+j)}(t_n) \frac{\Delta t^j}{j!} {\bf c}^j - \sum_{j=0}^\infty y^{(j)}(t_n) \Delta t^j \widehat{U} {\bf q}_j \\
    &= \sum_{j=0}^\infty y^{(j)}(t_n) \Delta t^j \Bigl[ \frac{1}{j!} {\bf c}^j - \widehat{U} {\bf q}_j \Bigr] - \sum_{j=0}^\infty \sum_{k=1}^N \frac{\alpha_k}{\alpha_0} y^{(2k+j)}(t_n) \frac{\Delta t^{j+2k}}{j!} {\bf c}^j,
\end{split}
\end{equation}
where ${\bf c}^j = [c_1^j,\dots,c_N^j]^\top$. Now, notice that
\begin{equation*}
    \sum_{j=0}^\infty \sum_{k=1}^N \frac{\alpha_k}{\alpha_0} y^{(2k+j)}(t_n) \frac{\Delta t^{j+2k}}{j!} {\bf c}^j
    =
    \sum_{j=2}^\infty \sum_{k=1}^{\lfloor j/2 \rfloor} \frac{\alpha_k}{\alpha_0} y^{(j)}(t_n) \frac{\Delta t^{j}}{(j-2k)!} {\bf c}^{j-2k},
\end{equation*}
which replaced into \eqref{eq:GLM:trunc:T} leads to
\begin{equation}\label{eq:GLM:trunc:T2}
    T_1^n
    =
    \sum_{j=0}^\infty y^{(j)}(t_n) \Delta t^j
    \Biggl[\frac{1}{j!}{\bf c}^{j} - \widehat{U} {\bf q}_j -  \frac{1}{\alpha_0} \sum_{k=1}^{\lfloor j/2 \rfloor} \frac{\alpha_k}{(j-2k)!}{\bf c}^{j-2k} \Biggr].
\end{equation}
Thus, the truncation error $T_1^n$ is of order $O(\Delta t^{a+1})$, for $a \in \mathbb{N}$, if
\begin{equation}\label{eq:GLM:trunc:T3}
    \widehat{U} {\bf q}_j + \frac{1}{\alpha_0} \sum_{k=1}^{\lfloor j/2 \rfloor} \frac{\alpha_k}{(j-2k)!}{\bf c}^{j-2k} = \frac{1}{j!}{\bf c}^{j},
    \; \text{ for $j=0,1,\dots,a$}.
\end{equation}
Notice that, by means of \eqref{eq:GLM:dG:4}, $T_1^n = O(\Delta t^{a+1})$ implies that $T_k^n = O(\Delta t^{a+1+2(k-1)})$ for $k=2,\dots,N$.

Now, replacing the exact solution $\widehat{{\bf y}}^{[n]}$ into the second block of \eqref{eq:GLM3}, recalling the definition of $G_{\rm GLM}$, and using Neumann series and Taylor expansion, we get
\begin{equation}\label{eq:trunc:theta}
\begin{split}
{\bm \theta}^{[n]} &= \widehat{{\bf y}}^{[n+1]} - G_{\rm GLM} \widehat{{\bf y}}^{[n]} \\
&= \sum_{j=0}^\infty y^{(j)}(t_{n+1}) \Delta t^j {\bf q}_j 
- \Biggl[ V - \Delta t^2 \lambda B [I+\Delta t^2 \lambda A]^{-1}U \Biggr]
\sum_{j=0}^\infty y^{(j)}(t_n) \Delta t^j {\bf q}_j \\
&= \sum_{j=0}^\infty y^{(j)}(t_{n+1}) \Delta t^j {\bf q}_j 
- \Biggl[ V + B \biggl[ \sum_{\ell=0}^\infty (-1)^\ell \Delta t^{2 \ell +2} \lambda^{\ell+1} A^k \biggr] U \Biggr]
\sum_{j=0}^\infty y^{(j)}(t_n) \Delta t^j {\bf q}_j \\
&= \sum_{j=0}^\infty \sum_{\ell=0}^\infty y^{(j+\ell)}(t_n) \frac{1}{\ell !}\Delta t^{j+\ell} {\bf q}_j 
- \sum_{j=0}^\infty y^{(j)}(t_n) \Delta t^j V {\bf q}_j \\
&\quad - \sum_{j=0}^\infty \sum_{\ell=0}^\infty (-1)^\ell \Delta t^{j+2 \ell +2}
y^{(j+2\ell +2)}(t_n) B A^k U{\bf q}_j \\
&=\sum_{j=0}^\infty \sum_{\ell=0}^j y^{(j)}(t_n) \frac{1}{\ell !}\Delta t^{j} {\bf q}_{j-\ell} 
- \sum_{j=0}^\infty y^{(j)}(t_n) \Delta t^j V {\bf q}_j \\
&\quad - \sum_{j=2}^\infty \sum_{\ell=0}^{\lfloor \frac{j-2}{2} \rfloor} (-1)^\ell \Delta t^{j}
y^{(j)}(t_n) B A^\ell U{\bf q}_{j-2\ell-2} \\
&= y(t_n) \bigl[ {\bf q}_0 - V {\bf q}_0 \bigr] 
+ y^{(1)}(t_n) \Delta t \bigl[ {\bf q}_0 + {\bf q}_1 - V {\bf q}_1 \bigr] \\
&\quad +\sum_{j=2}^\infty y^{(j)}(t_n) \Delta t^j \biggl[ 
\sum_{\ell=0}^j \frac{1}{\ell !} {\bf q}_{j-\ell} 
- \sum_{\ell=0}^{\lfloor \frac{j-2}{2} \rfloor} (-1)^\ell B A^\ell U{\bf q}_{j-2\ell-2}
- V {\bf q}_j
\biggr].
\end{split}
\end{equation}
Thus, the truncation error ${\bm \theta}^{[n]}$ is of order $O(\Delta t^{p+1})$, for $p \in \mathbb{N}$, if
\begin{equation}\label{eq:GLM:trunc:theta2}
\begin{split}
   V {\bf q}_0 &= {\bf q}_0, \\
   V {\bf q}_1 &= {\bf q}_0 + {\bf q}_1, \\
   V {\bf q}_j &= \sum_{\ell=0}^j \frac{1}{\ell !} {\bf q}_{j-\ell} 
-\sum_{\ell=0}^{\lfloor \frac{j-2}{2} \rfloor} (-1)^\ell B A^\ell U{\bf q}_{j-2\ell-2}, \; \text{ for $j=2,\dots,p$}.
\end{split}
\end{equation}
We summarize our findings in the following theorem.
\begin{theorem}[Consistency of DG2 as GLM - order conditions]
    Let ${\bf q}_{j}$, $j=0,1,\dots$, be vectors such that the order conditions \eqref{eq:GLM:trunc:T3} and \eqref{eq:GLM:trunc:theta2} hold for some $p \in \mathbb{N}$ and $a \in \mathbb{N}$, respectively.
    Then, a DG method in the form \eqref{eq:GLM} satisfies $T^n = O(\Delta t^{a+1})$ and ${\bm \theta}^{[n]} = O(\Delta t^{p+1})$.
\end{theorem}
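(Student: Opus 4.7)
The proof is essentially a matching exercise against the series expansions already derived in the text leading up to the theorem, so my plan is to organize those expansions into a clean statement rather than recompute them.

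The first step is to recall the expansion of $T_1^n$ obtained in \eqref{eq:GLM:trunc:T2}: the residual of the first block of \eqref{eq:GLM3}, after a Taylor expansion of $y$ around $t_n$ and a reindexing of the double sum, takes the form
\begin{equation*}
    T_1^n = \sum_{j=0}^\infty y^{(j)}(t_n)\,\Delta t^{j}\Biggl[\frac{1}{j!}{\bf c}^j - \widehat U {\bf q}_j - \frac{1}{\alpha_0}\sum_{k=1}^{\lfloor j/2\rfloor}\frac{\alpha_k}{(j-2k)!}{\bf c}^{j-2k}\Biggr].
\end{equation*}
The order conditions \eqref{eq:GLM:trunc:T3} are precisely the statement that the bracketed vector vanishes for $j=0,1,\dots,a$; hence all contributions of orders $\Delta t^0,\dots,\Delta t^a$ cancel, leaving $T_1^n = O(\Delta t^{a+1})$. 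To propagate this to the remaining blocks, I invoke the auxiliary relations \eqref{eq:GLM:dG:4}, which read $Y_k = \lambda\Delta t^2\, Y_{k-1}$ and therefore give $\widehat Y_k^{n+1} = \lambda^{k-1}\Delta t^{2(k-1)}\widehat Y_1^{n+1}$ on exact solutions. Consequently $T_k^n$ inherits a factor $\Delta t^{2(k-1)}$ relative to $T_1^n$, and $T^n = O(\Delta t^{a+1})$ follows as an overall block bound.

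For the output truncation ${\bm \theta}^{[n]}$ the plan is identical. I would recall the expansion \eqref{eq:trunc:theta}, obtained by expanding $y(t_{n+1})$ around $t_n$, applying the Neumann series to $[I+\Delta t^2\lambda A]^{-1}$ inside $G_{\rm GLM}$, and collecting like powers of $\Delta t$. The resulting series organizes itself into three groups of coefficients: the $j=0$ coefficient ${\bf q}_0 - V{\bf q}_0$, the $j=1$ coefficient ${\bf q}_0+{\bf q}_1 - V{\bf q}_1$, and for $j\geq 2$ the bracket
\begin{equation*}
    \sum_{\ell=0}^{j}\frac{1}{\ell!}{\bf q}_{j-\ell} - \sum_{\ell=0}^{\lfloor (j-2)/2\rfloor}(-1)^\ell BA^\ell U{\bf q}_{j-2\ell-2} - V{\bf q}_j.
\end{equation*}
The order conditions \eqref{eq:GLM:trunc:theta2} are exactly the vanishing of these brackets for $j=0,1,\dots,p$, and therefore ${\bm \theta}^{[n]} = O(\Delta t^{p+1})$ is immediate.

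There is no real obstacle, since all the bookkeeping has been discharged in the derivations \eqref{eq:GLM:trunc:T}--\eqref{eq:trunc:theta}; the only thing to be careful about is making explicit that the order conditions are read off term by term against a Taylor expansion of a sufficiently smooth solution $y$ (so that higher derivatives at $t_n$ exist and the tail is $O(\Delta t^{a+1})$, respectively $O(\Delta t^{p+1})$), and that the propagation from $T_1^n$ to the other blocks $T_k^n$ uses only the algebraic definition \eqref{eq:GLM:dG:4}, independently of the order conditions themselves. With those two observations in place, the theorem follows directly from the identifications above.
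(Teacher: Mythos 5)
Your proposal is correct and follows essentially the same route as the paper, which states this theorem as a summary of the expansions \eqref{eq:GLM:trunc:T}--\eqref{eq:GLM:trunc:theta2} derived immediately before it: the order conditions are exactly the vanishing of the bracketed Taylor coefficients up to the stated orders, and the propagation $T_k^n = O(\Delta t^{a+1+2(k-1)})$ via \eqref{eq:GLM:dG:4} is noted in the text as well. No gaps.
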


In light of Theorem \ref{thm:GLM_DG}, the above result may not be sufficient to get an accurate estimate of the order of the method, since the role of $W^{-1}$ applied to ${\bm \theta}^{[n]}$ may have a non-negligible impact.
To estimate it, we still rely on the order conditions \eqref{eq:GLM:trunc:theta2}. To do so, we first consider an expansion $W^{-1} = \sum_{j=0}^\infty \Delta t^j W_j$, for some matrices $W_j$. Combining this expansion with \eqref{eq:GLM:trunc:theta2}, the relations 
\begin{equation}\label{eq:GLM:trunc:theta3}
\begin{split}
   W_{\ell} V {\bf q}_0 &= W_{\ell}{\bf q}_0, \\
   W_{\ell}V {\bf q}_1 &= W_{\ell}({\bf q}_0 + {\bf q}_1), \\
   W_{\ell}V {\bf q}_j &= W_{\ell}\biggl[\sum_{\ell=0}^j \frac{1}{\ell !} {\bf q}_{j-\ell} 
-\sum_{\ell=0}^{\lfloor \frac{j-2}{2} \rfloor} (-1)^\ell B A^\ell U{\bf q}_{j-2\ell-2} \biggr], \; \text{ for $j=2,\dots,p_\ell$},
\end{split}
\end{equation}
arise naturally and the following result can be proved by a direct estimate of the term $W^{-1}{\bm \theta}^{[n]}$.
\begin{theorem}[Consistency of DG2 as GLM - more order conditions]\label{thm:more}
    Assume that there exist vectors ${\bf q}_\ell$, $\ell=1,\dots,p$, such that \eqref{eq:GLM:trunc:theta2} holds.
    Further, assume that \eqref{eq:GLM:trunc:theta3} holds for some $p_\ell \in \mathbb{N}$, for all $\ell=0,1,\dots$
    Then, $W^{-1}{\bm \theta}^{[n]} = O(\Delta t^{p_{\widetilde{\ell}}+\widetilde{\ell}+1})$ with $\widetilde{\ell} = \arg \min_{\ell} p_{\ell}$. Clearly, $p_{\widetilde{\ell}}= \min_{\ell} p_{\ell} \geq p$.
\end{theorem}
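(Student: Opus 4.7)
The plan is to revisit the computation \eqref{eq:trunc:theta} expressing ${\bm \theta}^{[n]}$ as a formal Taylor series in $\Delta t$, and to track what remains after left-multiplication by $W^{-1}$. Introducing the shorthand
\begin{equation*}
\mathcal{R}_j := \sum_{\ell=0}^j \frac{1}{\ell!} {\bf q}_{j-\ell} - \sum_{\ell=0}^{\lfloor (j-2)/2 \rfloor} (-1)^\ell B A^\ell U {\bf q}_{j-2\ell-2} - V {\bf q}_j
\end{equation*}
(with the obvious abbreviations for the special cases $j=0,1$), the derivation \eqref{eq:trunc:theta} reads
\begin{equation*}
{\bm \theta}^{[n]} = \sum_{j=0}^\infty y^{(j)}(t_n)\, \Delta t^j\, \mathcal{R}_j.
\end{equation*}
The hypothesis that \eqref{eq:GLM:trunc:theta2} holds for $j=0,\dots,p$ is exactly $\mathcal{R}_j = {\bf 0}$ for $j\leq p$, which recovers ${\bm \theta}^{[n]} = O(\Delta t^{p+1})$.

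Next I would plug the expansion $W^{-1} = \sum_{\ell=0}^\infty \Delta t^\ell W_\ell$ into the product and reorganize:
\begin{equation*}
W^{-1} {\bm \theta}^{[n]} = \sum_{\ell=0}^\infty \sum_{j=0}^\infty y^{(j)}(t_n)\, \Delta t^{\ell + j}\, W_\ell \mathcal{R}_j.
\end{equation*}
The central observation is that, by linearity, \eqref{eq:GLM:trunc:theta3} is equivalent to the cleaner statement $W_\ell \mathcal{R}_j = {\bf 0}$ for $j = 0, 1, \dots, p_\ell$: the three displayed cases in \eqref{eq:GLM:trunc:theta3} are exactly $W_\ell$ applied to the three cases defining $\mathcal{R}_j$. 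Thus, for each fixed $\ell$, only the terms with $j \geq p_\ell + 1$ survive the multiplication by $W_\ell$, and every such surviving term carries a factor $\Delta t^{\ell+j}$ with $\ell+j \geq \ell + p_\ell + 1$. Collecting contributions over all $\ell$, the dominant order of $W^{-1} {\bm \theta}^{[n]}$ is $\min_\ell (\ell + p_\ell + 1) = \widetilde{\ell} + p_{\widetilde{\ell}} + 1$, which is the claimed estimate.

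The closing inequality $p_{\widetilde{\ell}} \geq p$ comes for free: since $\mathcal{R}_j = {\bf 0}$ already for $j \leq p$, one has $W_\ell \mathcal{R}_j = {\bf 0}$ trivially for such $j$ and every $\ell$, so the largest $p_\ell$ making \eqref{eq:GLM:trunc:theta3} hold is always at least $p$. The main obstacle I anticipate is the rigorous passage from the formal double power series for $W^{-1}{\bm \theta}^{[n]}$ to a genuine $O(\Delta t^{p_{\widetilde{\ell}}+\widetilde{\ell}+1})$ remainder: one needs polynomial-in-$\Delta t$ control on $W_\ell$ and $\mathcal{R}_j$, together with bounded derivatives $y^{(j)}(t_n)$, so that the tail of the double series is absorbed into the $O$-term. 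A minor subtlety is that the minimizing exponent is really $\min_\ell(\ell + p_\ell)$ rather than $\min_\ell p_\ell$, so the notation $\widetilde{\ell} = \arg\min_\ell p_\ell$ in the statement should be read as the index realizing the best trade-off between the order $\ell$ in the expansion of $W^{-1}$ and the cancellation order $p_\ell$ it produces.
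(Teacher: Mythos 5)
Your proof is correct and is precisely the ``direct estimate of the term $W^{-1}{\bm \theta}^{[n]}$'' that the paper invokes without writing out: recognizing that \eqref{eq:GLM:trunc:theta3} is exactly the statement $W_\ell \mathcal{R}_j = {\bf 0}$ for $j \le p_\ell$ (with your residuals $\mathcal{R}_j$ taken from \eqref{eq:trunc:theta}) and reading off the first surviving power $\Delta t^{\ell+j}$ in the double expansion is the intended argument, and your caveat about needing uniform control of the tails to turn the formal series into a genuine $O$-estimate is the right one to flag. Your closing observation is also well taken and is a genuine (if minor) correction to the statement rather than to the proof: the sharp exponent is $\min_\ell(\ell+p_\ell)+1$, which coincides with $p_{\widetilde{\ell}}+\widetilde{\ell}+1$ for $\widetilde{\ell}=\arg\min_\ell p_\ell$ in the paper's $\mathbb{P}^1$ application ($p_0=5$, $p_1=3$) but not for an arbitrary sequence $\{p_\ell\}$, so $\widetilde{\ell}$ should really be defined as $\arg\min_\ell(\ell+p_\ell)$.
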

We now apply the theory developed in this section to the case $\mathbb{P}^1$. The reader may appreciate how complicated the analysis can become in the GLM formalisms.
For this reason, here we do not report the cases $\mathbb{P}^2$ and $\mathbb{P}^3$.

Let us recall the matrix $G=-A_+^{-1}A_-$ (for $s=\Delta t^2/2$) computed in \eqref{eq:matrixG_P1} and notice that
$d = -2 - \lambda\Delta t^2/2$, with $\alpha_0=-2$ and $\alpha_1 = -1/2$, and 
$G_0 = \begin{small} \begin{bmatrix}
0 & -2 \\
2 & -4 \\
\end{bmatrix} \end{small}$
and
$G_1 = \begin{small} \begin{bmatrix}
0 & 1/2 \\
-1/2 & -1 \\
\end{bmatrix} \end{small}$.
Thus, the GLM matrices are
\begin{equation*}
\begin{split}
    A &= \frac{\alpha_1}{\alpha_0} I_2, \quad U=\begin{bmatrix}
        \frac{1}{\alpha_0 G_0} & I_2 \\
    \end{bmatrix},  \quad
    B = \begin{bmatrix}
        \frac{\alpha_1}{\alpha_0} I_2 \\
        \frac{1}{\alpha_0} G_1
    \end{bmatrix}, \quad V= \begin{bmatrix}
        \frac{1}{\alpha_0}G_0 & I_2 \\
        0 & 0 \\
    \end{bmatrix}.\\
\end{split}    
\end{equation*}
Now, direct calculations reveal that
$$
G_{\rm GLM} = \begin{bmatrix}
0&        1-\frac{\lambda \Delta t^2}{4(\lambda \Delta t^2/4 + 1)}&    1-\frac{\lambda \Delta t^2}{4(\lambda \Delta t^2/4 + 1)}&    0\\
\frac{\lambda \Delta t^2}{4(\lambda \Delta t^2/4 + 1)}-1&        2-\frac{\lambda \Delta t^2}{2(\lambda \Delta t^2/4 + 1)}&   0&   1-\frac{\lambda \Delta t^2}{4(\lambda \Delta t^2/4 + 1)}\\
-\frac{\lambda \Delta t^2}{4(\lambda \Delta t^2/4 + 1)}&        \frac{\lambda \Delta t^2}{2(\lambda \Delta t^2/4 + 1)}&    0&    -\frac{\lambda \Delta t^2}{4(\lambda \Delta t^2/4 + 1)}\\
\frac{\lambda \Delta t^2}{2(\lambda \Delta t^2/4 + 1)} & -\frac{5\lambda \Delta t^2}{4(\lambda \Delta t^2/4 + 1)} & -\frac{\lambda \Delta t^2}{4(\lambda \Delta t^2/4 + 1)} & -\frac{\lambda \Delta t^2}{2(\lambda \Delta t^2/4 + 1)}
\end{bmatrix},
$$
and that $G_{\rm GLM} =W^{-1} D W$ with
$$
D ={\rm diag}(0,0,\gamma_1,\gamma_2),
$$
where 
$\gamma_{1}=\frac{(2+i\Delta t \sqrt{\lambda})^2}{\Delta t^2 \lambda+4}$ and $\gamma_{2}=-\frac{(2i+\Delta t \sqrt{\lambda})^2}{\Delta t^2 \lambda+4}$,
and
$$
W = \begin{bmatrix}
-2& 1&  \frac{4(\Delta t \sqrt{\lambda} + 2i)}{\Delta t^2 \lambda(\Delta t \sqrt{\lambda} - 6i)}&  \frac{4(\Delta t \sqrt{\lambda} - 2i)}{\Delta t^2 \lambda(\Delta t \sqrt{\lambda} + 6i)} \\
-1& 0& -\frac{4(\Delta t \sqrt{\lambda} - 2i)}{\Delta t^2 \lambda(\Delta t \sqrt{\lambda} - 6i)}& -\frac{4(\Delta t \sqrt{\lambda} + 2i)}{\Delta t^2 \lambda(\Delta t \sqrt{\lambda} + 6i)} \\ 
 1& 0&        -\frac{\Delta t^2 \lambda + 12 + \Delta t \sqrt{\lambda}4i}{\Delta t^2 \lambda + 36}&         -\frac{\Delta t \sqrt{\lambda} + 2i}{\Delta t \sqrt{\lambda} + 6i} \\
 0& 1&                                      1&                                      1 \\
\end{bmatrix}.
$$
We see that $W=O(\Delta t^{-2})$ and $\rho(G_{\rm GLM}) = 1$.
Moreover, it is possible to compute that
$W^{-1} = W_0 + \Delta t W_1 + O(\Delta t^2)$, where
$$
W_0 = \begin{bmatrix}
    0 & 0 & 1 & 0 \\
    0 & 0 & 0 & 1 \\
    0 & 0 & 0 & 0 \\
    0 & 0 & 0 & 0 \\
\end{bmatrix}, \quad
W_1 = \frac{3i}{8}\begin{bmatrix}
    0 & 0 & 0 & 0 \\
    0 & 0 & 0 & 0 \\
    -1 & 1 & -1 & 1 \\
    1 & -1 & 1 & -1 \\
\end{bmatrix}. 
$$
Now, direct calculations reveal that the vectors
\begin{equation*}
    {\bf q}_0 = \begin{bmatrix}
        1 \\ 1 \\ 0 \\ 0 \\
    \end{bmatrix}, \quad
    {\bf q}_1 = \begin{bmatrix}
        -1 \\ 0 \\ 0 \\ 0 \\
    \end{bmatrix}, \quad
    {\bf q}_2 = \begin{bmatrix}
        1/2 \\ 0 \\ -1/4 \\ 3/4 \\
    \end{bmatrix}, 
\end{equation*}
satisfy the order conditions \eqref{eq:GLM:trunc:T3} and \eqref{eq:GLM:trunc:theta2} for $a=p=2$, thus ${\bm \theta}^{[n]} = O(\Delta t^3)$ and $T_n = O(\Delta t^3)$.
However, it is not possible to find a vector ${\bf q}_3$ satisfying \eqref{eq:GLM:trunc:T3} and \eqref{eq:GLM:trunc:theta2} for $a=p=3$. We thus consider the effect of $W^{-1}$.
Since ${\bf q}_0$, ${\bf q}_1$, and ${\bf q}_2$ satisfy \eqref{eq:GLM:trunc:theta2} for $a=p=2$, they will clearly satisfy \eqref{eq:GLM:trunc:theta3} for all $\ell=0,1,2,\dots$ and $p_\ell = 2$, implying that $W^{-1} {\bm \theta}^{[n]}$ has at least order $O(\Delta t^2)$.
Now, one can verify that the vectors 
\begin{equation*}
    {\bf q}_3 = \begin{bmatrix}
        x \\ y \\ 0 \\ -1/4 \\
    \end{bmatrix}, \quad
    {\bf q}_4 = \begin{bmatrix}
        x_0 \\ y_0 \\ 1/8 \\ -1/4 \\
    \end{bmatrix}, \quad
    {\bf q}_5 = \begin{bmatrix}
        x_1 \\ y_1 \\ \frac{1}{24} + \frac{x}{4}-\frac{y}{2}\\ \frac{5y}{4} - \frac{x}{2} \\
    \end{bmatrix}, 
\end{equation*}
together with ${\bf q}_0$, ${\bf q}_1$, and ${\bf q}_2$,
satisfy the conditions \eqref{eq:GLM:trunc:theta3} for 
$\ell = 0,1$ with $p_0 = 5$ and $p_1=3$ (for any values of $x,y,x_0,y_0,x_1,y_1$). Thus, we obtained that $\widetilde{\ell} = \arg \min_{\ell} p_{\ell} = 1$, with
$p_{\widetilde{\ell}}=p_1=3$, and hence
$W^{-1} {\bm \theta}^{[n]} = O(\Delta t^{5})$ by Theorem \ref{thm:more}.
Hence, recalling that $T_n = O(\Delta t^3)$ and $W = O(\Delta t^{-2})$, Theorem \ref{thm:GLM_DG} implies that ${\bf e}^{[n]} = O(\Delta t^{2})$ and $E^n = O(\Delta t^{2})$. 
Notice that by appropriate choices of $x$ and $y$,  ${\bf q}_3$ can satisfy \eqref{eq:GLM:trunc:T3} for $a=3$. However, by Theorem \ref{thm:GLM_DG}, this does not improve the order of $E^n$. 
We thus conclude that DG2 with $\mathbb{P}^1$ and $s=\Delta t^2/2$ is second order convergent. Moreover, the reader can verify also in this GLM framework that $s \neq \Delta t^2/2$ does not lead to a second-order convergent method, in agreement with the results of Section \ref{sec:P1}.

Finally, we remark that, while the equivalence with GLM is elegant and puts DG2 very clearly into the context of numerical methods for differential equations, it does not lead to a formalism that is simpler to analyze.

\section{Time DG methods for first-order systems: DG1}\label{sec:dG:first}
For $T>0$, we rewrite \eqref{eq:modelProblem} as a system of first-order differential equations,
\begin{equation}
	\label{eq:modelProblem_system}
	\begin{cases}
            \dot{u}(t) - w(t) = 0 & \forall\, t \in (0,T], \\
		\dot{w}(t) + \lambda u(t) = 0 & \forall\, t \in (0,T],\\
		u(0) = \widehat{u}_0, \\
		w(0) = \widehat{u}_1,
	\end{cases}
\end{equation}
or in compact form as 
\begin{equation}\label{eq:modelP_system_matrix}
\begin{cases}
     \dot{\bm z}(t) = L \bm z(t), & \forall\, t \in (0,T], \\
    \bm z(0) = \widehat{\bm z}_0,
\end{cases}    
\end{equation}
where $\bm z(t) := [ u(t), w(t)]^\top$,  $L := \begin{bmatrix}
        0 & 1 \\ -\lambda & 0 
    \end{bmatrix}$,
and  $\widehat{\bm z}_0 := [ \widehat{u}_0, \widehat{u}_1]^\top$. In order to construct a DG method for \eqref{eq:modelP_system_matrix},
we focus as for the second-order case in Section~\ref{sec:dG:second} on one time slab $I_n$. 
Multiplying  \eqref{eq:modelP_system_matrix} by a (regular enough) test function $\bm v(t)$ and integrating in time over $I_n$ we obtain
\begin{equation}
	\label{Eq:Weak1}
	(\dot{\bm z},\bm v)_{I_n} - (L\bm z,\bm v)_{I_n}  = 0.
\end{equation}
Noting that $\bm z \in \bm H^1(0,T)$, since $u \in H^2(0,T)$ and $w = \dot{u}$, we add to \eqref{Eq:Weak1} the vanishing term $[\bm z]_{n-1}\cdot\bm v(t_{n-1}^+)$ and get
\begin{equation}
	\label{Eq:Weak2}
	(\dot{\bm z},\bm v)_{I_n} - (L\bm z,\bm v)_{I_n} + [\bm z]_{n-1}\cdot\bm v_{n-1}^+ = 0.
\end{equation}
Summing over all time slabs we obtain the problem: find $\bm z_{DG} \in [\mathbb{V}_{DG}]^2$ such that 
\begin{equation}
	\label{Eq:BilinearForm}
	\mathcal{B}(\bm z_{DG},\bm v) = \mathcal{G}(\bm v) \quad \forall \, \bm v \in [\mathbb{V}_{DG}]^2,  
\end{equation}
where $\mathcal{B}(\cdot,\cdot): [\mathbb{V}_{DG}]^2 \times [\mathbb{V}_{DG}]^2 \rightarrow \mathbb{R}$ is defined by 
\begin{align}
 \mathcal{B}(\bm z,\bm v) := \sum_{n=1}^N \big( (\dot{\bm z},\bm v)_{I_n} - (L\bm z,\bm v)_{I_n} \big) + \sum_{n=1}^{N-1} [\bm z]_n\cdot\bm v(t_n^+) +  \bm z(0^+)\cdot\bm v_0^+,  
\end{align}
and the linear functional $\mathcal{G}(\cdot): [\mathbb{V}_{DG}]^2 \rightarrow \mathbb{R}$ is 
\begin{equation}\label{Eq:LinearFunctional}
	\mathcal{G}(\bm v) := \widehat{\bm z}_0\cdot\bm z_{0}^+.
\end{equation}
Note that the bilinear form $\mathcal{B}(\cdot, \cdot)$ is well defined whenever its arguments are, at least, $\bm H^1(I_n)$ functions for any $n = 1, \dots, N$. The existence and uniqueness of the discrete solution and stability bounds in a suitable mesh-dependent norm can be found in \cite{AntoniettiMiglioriniMazzieri2021}.

\subsection{Algebraic formulation for DG discretization in first-order form: DG1}\label{sec:algebraic_1st}

As in Section~\ref{sec:algebraic} we focus 
on a generic time slab $I_{n}$, where a local polynomial degree $r$ is used, and we fix a basis $\{[\psi_{n,j}(t),0]^T,[0,\psi_{n,j}(t)]^T\}_{j=1,\dots,r}$ for the polynomial space $[\mathbb{P}^r(I_{n})]^2$, $2(r+1)$ being the dimension of the local finite-dimensional space $[\mathbb{V}_{n}^r]^2$. We write the trial function $\bm z$ as a linear combination of the basis functions, i.e.,
\begin{align*}
\bm z(t) = 
\sum \limits_{j=1}^{r+1} u_{n,j} \begin{bmatrix}
    \psi_{n,j}(t) \\ 0
\end{bmatrix}
+ \sum \limits_{j=1}^{r+1} w_{n,j} \begin{bmatrix}
     0 \\ \psi_{n,j}(t)
\end{bmatrix}, \text{\qquad for $t \in I_{n}$}. 
\end{align*} 
By choosing $\bm v$ in 
\eqref{Eq:Weak2} as the $i-$th basis function we obtain the system

\begin{align}\label{eq:simple_local_system_1st}
B \widehat{\bm z} = \bm c,
\end{align}
where
\begin{equation*}
    B := \begin{bmatrix}
        I_{2(r+1)} \\
        B_- & B_+ \\
        & B_- & B_+ \\
        & & \ddots & \ddots \\
        & & & B_- & B_+ \\
    \end{bmatrix},
    \quad
    \widehat{\bm z} := \begin{bmatrix}
        \widehat{\bm z}_0 \\
        \widehat{\bm z}_1 \\
        \widehat{\bm z}_2 \\
        \vdots \\
        \widehat{\bm z}_N \\
    \end{bmatrix},
    \quad
    \bm c := \begin{bmatrix}
        \bm c_0 \\
        \bm 0 \\
        \bm 0 \\
        \vdots \\
        \bm 0 \\
    \end{bmatrix}.
\end{equation*}
Here, $\widehat{\bm z}_n \in \mathbb{R}^{2(r+1)(N+1)}$ is the vector containing the coefficients $$\widehat{\bm z}_n := [
u_{n,1},w_{n,1},\dots,u_{n,r+1},w_{n,r+1}]^\top,  \quad n=1,\dots,N,$$ 
and $\widehat{\bm z}_0$ and $\bm c_0$ contain the initial conditions, and the local matrices $B_-$ and $B_+$ are given by
 \begin{align}
     B_{-} & := N^0 \otimes I_2, \label{eq:b_m}\\
     B_{+} & := (N^1 + N^3) \otimes I_2 - N^2 \otimes L, \label{eq:b_p} 
 \end{align} 
 where  $I_2 \in \mathbb{R}^2$ is the identity matrix and
 \begin{align*}
     N^0_{ij}  & :=   - \langle \psi_{n,j}, {\psi}_{n+1,i} \rangle_{t_n}, \quad  N^1_{ij} := (\dot{ \psi}_{n+1,j},{\psi}_{n+1,i})_{I_n}, \\
      N^2_{ij} & := (\psi_{n+1,j}, {\psi}_{n+1,i} )_{I_n}, \quad N^3_{ij}  :=   \langle \psi_{n+1,j}, \psi_{n+1,i} \rangle_{t_n},
 \end{align*}
for $i,j=1,\dots,r+1$. 
Clearly, \eqref{eq:simple_local_system_1st} can be written as
\begin{equation}\label{eq:simple_local_system2_1st}
    B_+ \widehat{\bm z}_{n+1} + B_- \widehat{\bm z}_{n} = \bm 0 \, \text{ for $n=0,1,\dots,N-1$},
\end{equation}
however, to derive the analogy presented in the following section, we rewrite  \eqref{eq:simple_local_system2_1st} as
\begin{equation}\label{eq:simple_local_system2_1st_rw}
    \widehat{B}_+ \bm z_{n+1} = - \widehat{B}_- \bm z_{n} \, \text{ for $n=0,1,\dots,N-1$},
\end{equation}
or equivalently as 
\begin{equation}\label{eq:dg_kroneker}
(I_{r+1} \otimes I_2 - N^4 \otimes L ) \bm z_{n+1} = - (N^5 \otimes I_2) \bm z_n, 
\end{equation}
where  $N^4 = (N^1 +N^3)^{-1} N^2$, $N^5 = (N^1 +N^3)^{-1} N^0$ and $I_{r+1}$ is the $(r+1)\times (r+1)$ identity matrix. Note that the system matrix in \eqref{eq:dg_kroneker} is invertible because of the well-posedness of problem \eqref{Eq:BilinearForm}. 
To see this directly at the algebraic level one has to prove that ${\rm det}(I_{r+1} \otimes I_2 - N^4 \otimes L ) = {\rm det}(I_{r+1}+\lambda (N^4)^2) \neq 0$.
Thus, one needs to study the eigenvalues of $I_{r+1}+\lambda (N^4)^2$, which are equal to $1+\lambda (\sigma(N^4))^2$, where $\sigma(N^4)$ are the eigenvalues of $N^4$.
In general, since the eigenvalues of $N^4$ are complex, the invertibility depends on their specific values. In Sections \ref{sec:P1_1st}, \ref{sec:P2_1st} and \ref{sec:P3_1st}, we will show that for polynomial degree $r=1,2,3$ the system matrix \eqref{eq:dg_kroneker} is invertible and in particular that $N_4 = \Delta t A$, cf. \eqref{eq:rk_kroneker},  with $A$ defined by the specific Butcher tableau \eqref{eq:Butcher_tableau}.

\subsection{DG1 as Implicit Runge-Kutta scheme}\label{sec:implicit_RK}
In this section, we start presenting the general analogy between the DG discretization \eqref{Eq:Weak2} and Implicit Runge Kutta (IRK) schemes by adapting the original proof in \cite{lesaint1974finite} to problem 
\eqref{eq:modelProblem_system}.
Next, we give an equivalent result by considering the matrix formulation \eqref{eq:dg_kroneker}.

We consider the problem \eqref{Eq:Weak2}, i.e., for any $I_n$, $n=1,...,N$, find $\bm z \in [\mathbb{P}^r(I_n)]^2$ such that
\begin{equation}
	\label{eq:DG1}
	(\dot{\bm z} - L\bm z, \bm v)_{I_n} + [\bm z]_{n-1}\cdot {\bm v}_{n-1}^+ = 0, \quad \forall \bm v \in [\mathbb{P}^r(I_n)]^2,
\end{equation}
with ${\bm z}_0^- = \widehat{\bm z}_0$,
and replace the first integral in  \eqref{eq:DG1} by an interpolatory quadrature formula,
\begin{equation}\label{eq:quad_formula}
\int_{t_{n-1}}^{t_n} (\dot{\bm z} - L\bm z) \, ds = \Delta t \sum_{i=1}^{r+1} b_i (\dot{\bm z} - L\bm z)(t_{n-1}^i) + \mathcal{O}(\Delta t^{p+1}), \quad r+1 \leq p \leq 2r+1,
\end{equation}
where $t_{n-1}^i = t_{n-1} + c_i \Delta t$, for $1\leq i \leq r+1$, $c_1=0$, and where $b_i$ and $c_i$ are weigths and nodes in $[0,1]$.
Using \eqref{eq:quad_formula} we can rewrite \eqref{eq:DG1}
for all $I_n$, $n=1,...,N$: we look for $\bm z \in [\mathbb{P}^r(I_n)]^2$ that satisfies 
\begin{equation}\label{eq:dg_1st}
\Delta t \sum_{i=1}^{r+1} b_i  
(\dot{\bm z} - L\bm z)(t_{n-1}^i)\cdot \bm v(t_{n-1}^i) + [\bm z]_{n-1}\cdot {\bm v}_{n-1}^+ = 0 \quad \forall \bm v \in [\mathbb{P}^r(I_n)]^2.
\end{equation}
Next, we define
\begin{equation*}
\begin{cases}
\bm z_{n-1} := \bm z(t_{n-1}^-) & \\
\bm z_{n-1}^1  := \bm z(t_{n-1}^+) = \bm z(t_{n-1}^1), & \\ 
\bm z_{n-1}^i  := \bm z(t_{n-1}^i), & 2 \leq i \leq r+1,
\end{cases}
\end{equation*}
and introduce the Lagrange polynomials
\begin{equation*}
\ell_i(t) := \prod_{j=2,j\neq i}^{r+1} \dfrac{t-c_j}{c_i-c_j}, \quad 2 \leq i \leq r+1.
\end{equation*}
Following \cite{lesaint1974finite} one can prove that \textit{scheme \eqref{eq:dg_1st} is equivalent to the implicit Runge-Kutta method}
\begin{equation}\label{eq:general_RK_dgmethod}
\begin{cases}
\bm z_{n-1}^i & = \bm z_{n-1} + \Delta t \sum_{j=1}^{r+1} a_{ij} L \bm z_{n-1}^j \quad 1 \leq i \leq r+1, \\
\bm z_{n} & = \bm z_{n-1} + \Delta t \sum_{j=1}^{r+1} b_{j} L \bm z_{n-1}^j,
\end{cases}
\end{equation}
where $a_{i1} = b_1$  and $a_{ij} = \int_{0}^{c_i} \ell_j(s) ds - b_1 \ell_j(c_1)$, for  $1\leq i \leq r+1$, $2\leq j\leq r+1$.
To do so, we consider the basis functions $\{ \bm \psi_i \}_{i=1}^{r+1} = \{[\psi_{i}(t),0]^T,[0,\psi_{i}(t)]^T\}$ for $i=1,\dots,r+1$ for $[\mathbb{P}^r(I_n)]^2$ such that $\psi_{i}(t_{n-1}^j) = \delta_{ij}$ for $1\leq i,j \leq r+1$. 
Next, we replace $\bm v$ by $\bm \psi_i$ in \eqref{eq:dg_1st} to get
\begin{equation}\label{eq:dg_1st_1step}
\begin{cases}
 \bm z(t_{n-1}^+) - \bm z(t_{n-1}^-) + \Delta t b_1 \Big( \dot{\bm z}(t_{n-1}^1)-L\bm z(t_{n-1}^1)\Big) = \bm 0, & \\ \dot{\bm z}(t_{n-1}^i)-L\bm z(t_{n-1}^i) = \bm 0, & \quad 2 \leq i \leq r+1,
 \end{cases}  
\end{equation}
and use the second equation above to obtain
\begin{equation*}
\dot{\bm z}_h(t) = \sum_{j=2}^{r+1} \ell_j\left(\frac{t-t_{n-1}}{\Delta t}\right)\dot{\bm z}_h(t_{n-1}^i)  =  \sum_{j=2}^{r+1} \ell_j\left(\frac{t-t_{n-1}}{\Delta t}\right) L \bm z(t_{n-1}^j).
\end{equation*}
Taking $t = t_{n-1} = t_{n-1}^1$ and using the first equation in \eqref{eq:dg_1st_1step} yields 
\begin{equation}\label{eq:first _equationRK}
\bm  z_{n-1}^1 = \bm z_{n-1} + \Delta t b_1 \Big(
L \bm z_{n-1}^1 - \sum_{j=2}^{r+1} \ell_j(c_1) L \bm z_{n-1}^j \Big).
\end{equation}
On the other hand, for $2\leq i \leq r+1$, we have
\begin{equation*}
\bm z_{n-1}^i = \bm z_{n-1}^1 + \int_{t_{n-1}^1}^{t_{n-1}^i} \dot{\bm z}_h(s) \, ds,
\end{equation*}
and then 
\begin{equation*}
\bm z_{n-1}^i = \bm z_{n-1}^1 + \Delta t \left( b_1 L \bm z_{n-1}^1 + \sum_{j=2}^{r+1} \left( \int_0^{c_i} \ell_j(s) \, ds - b_1\ell_j(c_1)\right)L \bm z_{n-1}^j \right).
\end{equation*}
Similarly for $t_{n}^-$  we get 
\begin{equation*}
\bm z_{n} = \bm z_{n-1} + \Delta t \left( b_1 L \bm z_{n-1}^1 + \sum_{j=2}^{r+1} \left( \int_0^{1} \ell_j(s) \, ds - b_1\ell_j(c_1)\right)L \bm z_{n-1}^j \right),
\end{equation*}
and noticing that $\int_0^1 \ell_j(s) \, ds = b_1 \ell(\xi_1) + b_j$ we obtain
\begin{equation}\label{eq:second_equationRK}
\bm z_{n} = \bm z_{n-1} + \Delta t \sum_{j=1}^{r+1}  b_j L \bm z_{n-1}^j.
\end{equation}
Equations \eqref{eq:first _equationRK}-\eqref{eq:second_equationRK} are identical to the equations in \eqref{eq:general_RK_dgmethod}, which means that the discontinuous Galerkin method leads to the one-step method  \eqref{eq:general_RK_dgmethod}. On the other hand, system \eqref{eq:general_RK_dgmethod} can be viewed as a discontinuous Galerkin method.

Next, with a different point of view,  we prove that schemes of the form \eqref{eq:simple_local_system2_1st_rw} are IRK methods with $(r + 1)$-stages, cf. \cite[Remark 1]{Southworth2021}.
Moreover, for polynomial degree $r=1,2,3$, we show that the DG scheme is equivalent to an IRK Lobatto-IIIC method, by computing the entries of the time matrices $N_4$ and $N_5$ through a Gauss-Legendre-Lobatto (GLL) quadrature formula having $r+1$ points and weights, and consider the basis functions $\{\psi_\ell\}_{\ell=1}^{r+1}$ as the characteristic polynomials associated with these points.
We show, under specific assumptions, that the Runge-Kutta method \eqref{eq:RK_extended_0} (and \eqref{eq:rk_kroneker}) is equivalent to the DG scheme \eqref{eq:dg_kroneker}. 

\begin{theorem}[equivalence between DG and RK]\label{thm:equiv}
    Assume that $s=r+1$, $N^4=\Delta t A$, $b_i = a_{s,i}$ for $i=1,\dots,s$, that the matrix $N^5 \in \mathbb{R}^{(r+1)\times (r+1)}$ is zero up to the last column having all entries equal to $-1$, and that the matrix $(I_s \otimes I_2 - N^4 \otimes L)$ is invertible, cf. \eqref{eq:dg_kroneker}. 
    If $[\widehat{\bm z}_0]_{2r+1,2r+2} = \bm z_0$, then 
    \begin{equation}\label{eq:equivalence}
        [\widehat{\bm z}_{n}]_{2r+1,2r+2} = \bm z_{n}
        \quad \text{and} \quad
        \widehat{\bm z}_n = (I_{r+1} \otimes L^{-1}) \bm k_{n-1}
        \quad \text{for $n=1,\dots,N$}.
    \end{equation}
\end{theorem}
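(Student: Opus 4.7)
The plan is to prove both identities by induction on $n \in \{0,1,\ldots,N\}$, with inductive hypothesis that $[\widehat{\bm z}_n]_{2r+1,2r+2} = \bm z_n$. The base case $n=0$ is given. For the inductive step, I will first derive $\widehat{\bm z}_{n+1} = (I_s \otimes L^{-1})\bm k_n$, and then deduce $[\widehat{\bm z}_{n+1}]_{2r+1,2r+2} = \bm z_{n+1}$ from it.

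The first ingredient exploits the assumed sparsity of $N^5$: since every column of $N^5$ except the last is zero and the last column is $-\mathbf{1}$, the product $(N^5 \otimes I_2)\widehat{\bm z}_n$ is a block vector whose every $2 \times 1$ block equals $-[\widehat{\bm z}_n]_{2r+1,2r+2}$. By the inductive hypothesis this is $-\bm z_n$, so the DG iteration \eqref{eq:dg_kroneker} (with $N^4 = \Delta t A$ and $s = r+1$) reduces to
\begin{equation*}
(I_s \otimes I_2 - \Delta t A \otimes L)\,\widehat{\bm z}_{n+1} \;=\; \mathbf{1}_s \otimes \bm z_n .
\end{equation*}

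I next show that $(I_s \otimes L^{-1})\bm k_n$ satisfies this same linear system, which by the assumed invertibility of the coefficient matrix forces $\widehat{\bm z}_{n+1} = (I_s \otimes L^{-1})\bm k_n$. Rewriting the RK system \eqref{eq:rk_kroneker} as $(I_s \otimes I_2 - \Delta t A \otimes L)\bm k_n = (I_s \otimes L)(\mathbf{1}_s \otimes \bm z_n)$ and applying the mixed-product rule
\begin{equation*}
(I_s \otimes L^{-1})(\Delta t A \otimes L) \;=\; \Delta t A \otimes I_2 \;=\; (\Delta t A \otimes L)(I_s \otimes L^{-1}),
\end{equation*}
shows that $(I_s \otimes L^{-1})$ commutes with the coefficient matrix; multiplying the RK system on the left by this factor yields exactly the desired linear system for $(I_s \otimes L^{-1})\bm k_n$.

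Finally, reading off the last $2 \times 1$ block of the identity $\widehat{\bm z}_{n+1} = (I_s \otimes L^{-1})\bm k_n$, and using the stage definition $\bm k_{s,n} = L(\bm z_n + \Delta t \sum_j a_{s,j}\bm k_{j,n})$ together with the hypothesis $b_j = a_{s,j}$, I obtain
\begin{equation*}
[\widehat{\bm z}_{n+1}]_{2r+1,2r+2} \;=\; L^{-1}\bm k_{s,n} \;=\; \bm z_n + \Delta t \sum_{j=1}^s b_j \bm k_{j,n} \;=\; \bm z_{n+1},
\end{equation*}
which closes the induction. The main delicate point I anticipate is bookkeeping the block structure, namely verifying that the last $2\times 1$ block of the tensor-product vector $(I_s \otimes L^{-1})\bm k_n$ really equals $L^{-1}\bm k_{s,n}$ under the ordering $\widehat{\bm z}_n = [u_{n,1},w_{n,1},\dots,u_{n,r+1},w_{n,r+1}]^\top$; apart from this, the argument is a clean exercise in Kronecker-product algebra, and the hypothesis $b_j = a_{s,j}$ (the stiffly-accurate property) is precisely what makes the endpoint of the RK step coincide with the last DG nodal value.
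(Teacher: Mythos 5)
Your proof is correct and follows essentially the same route as the paper's: induction with the $N^5$ sparsity reducing the DG step to $(I_s\otimes I_2-\Delta tA\otimes L)\widehat{\bm z}_{n+1}=\mathbf{1}_s\otimes\bm z_n$, the Kronecker mixed-product commutation of $I_s\otimes L^{-1}$ with the coefficient matrix plus invertibility to identify $\widehat{\bm z}_{n+1}=(I_s\otimes L^{-1})\bm k_n$, and the stiffly-accurate property $b_j=a_{s,j}$ to read off the last block as $\bm z_{n+1}$. No gaps.
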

\begin{proof}
    The proof is by induction. 
    Since the relation $[\widehat{\bm z}_0]_{2r+1,2r+2} = \bm z_0$ holds by assumption, we assume that \eqref{eq:equivalence} holds for $n$ and we prove it for $n+1$.
    To prove the second equation in \eqref{eq:equivalence}, a direct calculation using the structure of $N^5$ and the induction hypothesis $[\widehat{\bm z}_{n}]_{2r+1,2r+2} = \bm z_{n}$ allow us to obtain that $\bm 1 \otimes  \bm z_n = - (N^5 \otimes I_2) \widehat{\bm z}_n$, where $\bm 1 \in \mathbb{R}^{s}$ is a vector with all components equal to $1$.
    Now, noticing that the matrices $I_{s} \otimes L^{-1}$ and $(I_s \otimes I_2 - N^4\otimes L)$ commute\footnote{This follows using the property of the Kronecker product $(A\otimes B)(C\otimes D) = (AC) \otimes (BD)$.}, and using \eqref{eq:rk_kroneker}, we can compute
    \begin{equation*}
    \begin{split}
        - (N^5 \otimes I_2) \widehat{\bm z}_n 
        &= \bm 1 \otimes \bm z_n 
        = (I_s \otimes L^{-1}) (\bm 1 \otimes L \bm z_n) 
        = (I_s \otimes L^{-1}) \bm f\\
        &= (I_s \otimes L^{-1}) (I_s \otimes I_2 - \Delta t A \otimes L) \bm k_n \\
        &= (I_s \otimes L^{-1}) (I_s \otimes I_2 - N^4 \otimes L) \bm k_n \\
        &= (I_s \otimes I_2 - N^4 \otimes L) (I_s \otimes L^{-1}) \bm k_n,
    \end{split}
    \end{equation*}
    which shows that the vector $(I_s \times L^{-1}) \bm k_n$ satisfies \eqref{eq:dg_kroneker}. Thus, by the invertibility of $(I_s \otimes I_2 - N^4 \otimes L)$  it follows that $\widehat{\bm z}_{n+1} = (I_s \otimes L^{-1}) \bm k_n$.
    To prove the first equation in \eqref{eq:equivalence},
    since $b_i=a_{s,i}$ for all $i$, we obtain from \eqref{eq:RK_extended_0} that
    \begin{equation*}
        \bm k_{s,n} 
        = L\Bigl(\bm z_n + \Delta t \sum_{j=1}^s a_{s,j} \bm k_{j,n}\Bigr) 
        = L\Bigl(\bm z_n + \Delta t \sum_{j=1}^s b_{j} \bm k_{j,n}\Bigr) 
        = L \bm z_{n+1}.
    \end{equation*}
    Using this relation, we can compute
    \begin{equation*}
        \widehat{\bm z}_{n+1} = (I_{s} \otimes L^{-1}) \bm k_n
        = \begin{bmatrix}
            L^{-1} \\
            & \ddots \\
            & & L^{-1} \\
            & & & L^{-1} \\
        \end{bmatrix}
        \begin{bmatrix}
            \bm k_{1,n} \\
            \vdots \\
            \bm k_{s-1,n} \\
            L \bm z_{n+1} \\
        \end{bmatrix}
        =
        \begin{bmatrix}
            L^{-1}\bm k_{1,n} \\
            \vdots \\
            L^{-1}\bm k_{s-1,n} \\
            \bm z_{n+1} \\
        \end{bmatrix},
    \end{equation*}
    which shows that $[\widehat{\bm z}_{n+1}]_{2r+1,2r+2} = \bm z_{n+1}$, and the claim follows.
\end{proof}

\subsection{Polynomial degree \texorpdfstring{$r=1$}{r=1}}\label{sec:P1_1st}
For $\mathbb{V}_{n}^1$ we consider the same basis functions as in Section~\ref{sec:P1}, and use the GLL nodes and weights, i.e., 
\begin{equation*}
    \{x_1, x_2\} = \{-1,1\}, \quad \{w_1, w_2 \} = \{1,1\}. 
\end{equation*}
The matrices $N_4$ and $N_5$ are then given by
\begin{equation}\label{eq:mat:p1}
    N_4 = \Delta t \begin{bmatrix}
     1/2 &  -1/2 \\ 1/2 & \phantom{-}1/2  
    \end{bmatrix}, \quad 
    N_5 =  \begin{bmatrix}
        0 & -1 \\ 0 & -1
    \end{bmatrix},
\end{equation}
which corresponds to the Butcher tableau
\begin{equation*}
\renewcommand\arraystretch{1.2}
\begin{array}{c|cc}
0 & 1/2 & -1/2\\
1 & 1/2 & \phantom{-}1/2 \\
\hline
& 1/2 & \phantom{-}1/2
\end{array}
\end{equation*}
for \eqref{eq:rk_kroneker}, i.e., the $2$-stage Lobatto IIIC method, see \cite{Jay2015}.

\begin{corollary}[DG is a RK Lobatto IIIC - $\mathbb{P}^1$]\label{coro:RKLIIIC_r1}
    Let $r=1$. Denote by $\{ (\bm k_n , \bm z_n) \}_n$, with $\bm k_n = [\bm k_{1,n},\bm k_{2,n}]^\top$, the sequence produced by the Lobatto IIIC scheme \eqref{eq:RK_extended_0}-\eqref{eq:RK_extended} starting from $\bm z_0$, and by $\{ \widehat{\bm z}_n \}_n$ the sequence generated by the DG scheme \eqref{eq:dg_kroneker} starting from $\widehat{\bm z}_0$.
    If $[\widehat{\bm z}_0]_{3,4} = \bm z_0$, then 
    \begin{equation}\label{eq:equivalenceP1}
        [\widehat{\bm z}_{n}]_{3,4} = \bm z_{n}
        \quad \text{and} \quad
        \widehat{\bm z}_n = (I_{2} \otimes L^{-1}) \bm k_{n-1}
        \quad \text{for $n=1,\dots,N$}.
    \end{equation}
\end{corollary}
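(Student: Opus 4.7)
The plan is to verify directly the hypotheses of Theorem \ref{thm:equiv} with $s = r+1 = 2$ and then read off the claim from \eqref{eq:equivalence}. First, I would note that the matrices $N^4$ and $N^5$ given in \eqref{eq:mat:p1} fit precisely the $2$-stage Lobatto IIIC Butcher tableau recalled just afterwards: indeed $N^4 = \Delta t A$ with $A = \bigl[\begin{smallmatrix}1/2 & -1/2 \\ 1/2 & 1/2\end{smallmatrix}\bigr]$, the weights $b_1 = b_2 = 1/2$ coincide with the entries $a_{2,1}, a_{2,2}$ forming the last row of $A$ (so $b_i = a_{s,i}$ for $i=1,2$), and $N^5$ has vanishing first column and second column equal to $(-1,-1)^\top$, exactly matching the structural assumption on $N^5$ in Theorem \ref{thm:equiv}.

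The only non-trivial hypothesis left is the invertibility of the system matrix $(I_2 \otimes I_2 - N^4 \otimes L)$. Following the observation made right after \eqref{eq:dg_kroneker}, this amounts to showing $\det(I_2 + \lambda (N^4)^2) \neq 0$. A direct calculation yields $A^2 = \bigl[\begin{smallmatrix}0 & -1/2 \\ 1/2 & 0\end{smallmatrix}\bigr]$, hence $I_2 + \lambda (N^4)^2 = \bigl[\begin{smallmatrix}1 & -\lambda \Delta t^2/2 \\ \lambda \Delta t^2/2 & 1\end{smallmatrix}\bigr]$, whose determinant equals $1 + \lambda^2 \Delta t^4/4 > 0$ for any $\lambda>0$ and $\Delta t > 0$. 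Invertibility is therefore automatic.

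With all hypotheses of Theorem \ref{thm:equiv} verified, its conclusion \eqref{eq:equivalence} specialised to $s=2$ (so that $2r+1 = 3$ and $2r+2 = 4$) is verbatim the identity \eqref{eq:equivalenceP1}, and the corollary follows. No substantive obstacle is anticipated here: the whole argument amounts to identifying the DG time-stepping matrices in \eqref{eq:mat:p1} with the Butcher coefficients of the $2$-stage Lobatto IIIC method, and discharging the invertibility hypothesis by a single $2\times 2$ determinant computation.
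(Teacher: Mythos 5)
Your proposal is correct and follows essentially the same route as the paper: verify the hypotheses of Theorem~\ref{thm:equiv} against the matrices in \eqref{eq:mat:p1} and discharge the invertibility condition by a determinant computation. The only cosmetic difference is that you exploit the reduction $\det(I_{2}\otimes I_2 - N^4\otimes L)=\det(I_{2}+\lambda (N^4)^2)$ to work with a $2\times 2$ determinant, whereas the paper writes out the full $4\times 4$ matrix $K$ and computes $\det K=(4+\lambda^2\Delta t^4)/4$ directly; both yield the same positive value.
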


\begin{proof}
    The result follows by Theorem~\ref{thm:equiv}. In fact, the coefficients $b_j$ and $a_{j,i}$ and the matrix $N^5$ (given in \eqref{eq:mat:p1}) clearly satisfy the hypotheses of Theorem~\ref{thm:equiv}. Moreover, a direct calculation reveals that $K:= (I_s \otimes I_2 - \Delta t A \otimes L)$ is
    \begin{equation*}
        K = \begin{bmatrix}
        1 & -\frac{\Delta t}{2} & 0 & \frac{\Delta t}{2}\\
        \frac{\Delta t \lambda}{2} & 1 & -\frac{\Delta t \lambda}{2} & 0\\
        0 & -\frac{\Delta t}{2} & 1 & -\frac{\Delta t}{2}\\
        \frac{\Delta t \lambda}{2} & 0 & \frac{\Delta t \lambda}{2} & 1
        \end{bmatrix},
    \end{equation*}
    and ${\rm det} \, K = \frac{4+\lambda^2 \Delta t^4}{4}>0$.
\end{proof}

\subsection{Polynomial degree \texorpdfstring{$r=2$}{r=2}}\label{sec:P2_1st}
For $\mathbb{V}_{n}^2$ we consider the same basis functions as in Section~\ref{sec:P2}, and use again the GLL nodes and weights, i.e., 
\begin{equation*}
    \{x_1, x_2, x_3\} = \{-1,0,1\}, \quad \{w_1, w_2, w_3\} = \{1/3,4/3,1/3\} 
\end{equation*}
The matrices $N_4$ and $N_5$ then become
\begin{equation}\label{eq:mat:p2}
    N_4 = \Delta t \begin{bmatrix}
    1/6 & -1/3  & \phantom{-}1/6 \\ 1/6 &  \phantom{-}5/12  & -1/12  \\   
       1/6 & \phantom{-}2/3  & \phantom{-}1/6    
    \end{bmatrix}, \quad 
    N_5 = \begin{bmatrix}
        0 & 0 & -1 \\ 0 & 0 & -1 \\ 0 & 0 & -1
    \end{bmatrix},
\end{equation}
which corresponds to the Butcher tableau
\begin{equation*}
\renewcommand\arraystretch{1.2}
\begin{array}{c|ccc}
0 & 1/6 & -1/3  & \phantom{-}1/6 \\
1/2 & 1/6 &  \phantom{-}5/12  & -1/12\\
1 & 1/6 & \phantom{-}2/3  & \phantom{-}1/6 \\
\hline
& 1/6 & \phantom{-}2/3 & \phantom{-}1/6 
\end{array}
\end{equation*}
for \eqref{eq:rk_kroneker}, i.e., the $3$-stage Lobatto IIIC method, see \cite{Jay2015}.

\begin{corollary}[DG is a RK Lobatto IIIC - $\mathbb{P}^2$]\label{coro:RKLIIIC_r2}
    Let $r=2$. Denote by $\{ (\bm k_n , \bm z_n) \}_n$, with $\bm k_n = [\bm k_{1,n},\bm k_{2,n}]^\top$, the sequence produced by the Lobatto IIIC scheme \eqref{eq:RK_extended_0}-\eqref{eq:RK_extended} starting from $\bm z_0$, and by $\{ \widehat{\bm z}_n \}_n$ the sequence generated by the DG scheme \eqref{eq:dg_kroneker} starting from $\widehat{\bm z}_0$.
    If $[\widehat{\bm z}_0]_{5,6} = \bm z_0$, then 
    \begin{equation}\label{eq:equivalenceP2}
        [\widehat{\bm z}_{n}]_{5,6} = \bm z_{n}
        \quad \text{and} \quad
        \widehat{\bm z}_n = (I_{3} \otimes L^{-1}) \bm k_{n-1}
        \quad \text{for $n=1,\dots,N$}.
    \end{equation}
\end{corollary}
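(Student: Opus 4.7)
The plan is to reduce this to a direct application of Theorem~\ref{thm:equiv}, exactly as was done for the $\mathbb{P}^1$ case in Corollary~\ref{coro:RKLIIIC_r1}. The theorem has four structural hypotheses to verify, plus an invertibility condition. For the structural part, I would first read off from \eqref{eq:mat:p2} that $s = r+1 = 3$, that $N^4 = \Delta t A$ with $A$ the Butcher matrix of the $3$-stage Lobatto IIIC scheme, and that $N^5$ is zero except in its last column, whose entries are all $-1$. The relation $b_i = a_{3,i}$ for $i=1,2,3$ is immediate from the Butcher tableau displayed just above the corollary, since the weight vector $(1/6,\,2/3,\,1/6)$ coincides with the third row of $A$.

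The only substantive step is proving invertibility of $K := I_3 \otimes I_2 - \Delta t A \otimes L$. I would use the standard Kronecker-product identity
\begin{equation*}
    \det K \;=\; \prod_{\mu \in \sigma(\Delta t A)} \det\bigl(I_2 - \mu L\bigr) \;=\; \prod_{\mu \in \sigma(\Delta t A)}\bigl(1 + \lambda \mu^2\bigr),
\end{equation*}
which exploits that $L$ has spectrum $\{\pm i\sqrt{\lambda}\}$. Equivalently, writing $Q(z) := \det(I_3 - zA)$ for the denominator of the stability function of Lobatto IIIC with $s=3$ (recalled earlier in the paper), invertibility of $K$ amounts to $Q(\pm i\sqrt{\lambda}\,\Delta t) \neq 0$. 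A short calculation with the explicit $Q(z) = 1 - z/4 + z^2/4 - z^3/24$ shows that the imaginary part vanishes only when $\lambda \Delta t^2 = 6$, in which case the real part equals $-1/2$, so $Q(i\sqrt{\lambda}\,\Delta t) \neq 0$ for all $\Delta t > 0$ and $\lambda > 0$. This is the only nontrivial check.

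With all hypotheses of Theorem~\ref{thm:equiv} verified, its conclusion immediately yields \eqref{eq:equivalenceP2}: both the stagewise identity $\widehat{\bm z}_n = (I_3 \otimes L^{-1})\bm k_{n-1}$ and the component-extraction property $[\widehat{\bm z}_n]_{5,6} = \bm z_n$ follow with no further work. The main obstacle, if any, is purely computational and lies in the invertibility check; but because the stability function of Lobatto IIIC is known in closed form, this reduces to a one-line polynomial verification rather than an explicit $6\times 6$ determinant expansion as was done in Corollary~\ref{coro:RKLIIIC_r1}.
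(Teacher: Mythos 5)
Your proposal follows the same overall strategy as the paper: both reduce the corollary to Theorem~\ref{thm:equiv} by checking its structural hypotheses (which are immediate from \eqref{eq:mat:p2} and the Butcher tableau) and then verifying invertibility of $K = I_3\otimes I_2 - \Delta t\, A\otimes L$. Where you differ is in how that last check is done. The paper writes out the explicit $6\times 6$ matrix $K$ and computes $\det K = \tfrac{1}{576}\bigl(576+36\lambda\Delta t^2+\lambda^3\Delta t^6\bigr)>0$ by brute force; you instead factor $\det K = \prod_{\mu\in\sigma(\Delta t A)}(1+\lambda\mu^2) = Q(i\sqrt{\lambda}\,\Delta t)\,Q(-i\sqrt{\lambda}\,\Delta t)$ with $Q(z)=\det(I_3-zA)$, reducing the problem to showing that $Q$ has no roots on the imaginary axis. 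This is a genuinely nicer route: it explains \emph{why} the determinant is positive (it is $|Q(iw)|^2$ with $w=\sqrt{\lambda}\,\Delta t$) and it scales to any stage number, whereas the paper's computation must be redone from scratch for each $r$.

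One correction, though: you took the polynomial $Q$ from the paper's displayed stability function, but that display contains a typo. For the $3$-stage Lobatto IIIC matrix $A$ of \eqref{eq:mat:p2} one has $Q(z)=\det(I_3-zA)=1-\tfrac{3}{4}z+\tfrac{1}{4}z^2-\tfrac{1}{24}z^3$ (the correct denominator of the $(1,3)$-Pad\'e approximant of $e^z$), not $1-\tfrac{1}{4}z+\tfrac{1}{4}z^2-\tfrac{1}{24}z^3$. With the correct $Q$ your argument still closes, and in fact reproduces the paper's formula exactly: $|Q(iw)|^2 = 1+\tfrac{w^2}{16}+\tfrac{w^6}{576}$, matching $\det K$ above; the imaginary part of $Q(iw)$ vanishes at $\lambda\Delta t^2=18$ (not $6$), where the real part equals $-7/2$ (not $-1/2$). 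To make the step airtight you should either compute $\det(I_3-zA)$ directly from the Butcher matrix or justify that the stability function's denominator in lowest terms coincides with $\det(I_3-zA)$ (no cancellation occurs here since the numerator has degree $1$ and $A$ is invertible). With that fix, your proof is complete and arguably cleaner than the paper's.
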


\begin{proof}
    The result follows again by Theorem~\ref{thm:equiv}. In fact, the coefficients $b_j$ and $a_{j,i}$ and the matrix $N^5$ (given in \eqref{eq:mat:p2}) clearly fulfill the hypotheses of Theorem~\ref{thm:equiv}, and a direct calculation reveals that $K:= (I_s \otimes I_2 - \Delta t A \otimes L)$ is
    \begin{equation*}
        K = \begin{bmatrix}
        1 & -\frac{\Delta t}{6} & 0 & \frac{\Delta t}{3} & 0 & -\frac{\Delta t}{6}\\
\frac{\Delta t \lambda}{6} & 1 & -\frac{\Delta t \lambda}{3} & 0 & \frac{\Delta t \lambda}{6} & 0\\
0 & -\frac{\Delta t}{6} & 1 & -\frac{5 \Delta t}{12} & 0 & \frac{\Delta t}{12}\\
\frac{\Delta t \lambda}{6} & 0 & \frac{5 \Delta t \lambda}{12} & 1 & -\frac{\Delta t \lambda}{12} & 0\\
0 & -\frac{\Delta t}{6} & 0 & -\frac{2 \Delta t}{3} & 1 & -\frac{\Delta t}{6}\\
\frac{\Delta t \lambda}{6} & 0 & \frac{2 \Delta t \lambda}{3} & 0 & \frac{\Delta t \lambda}{6} & 1\\
        \end{bmatrix},
    \end{equation*}
    and ${\rm det} \, K = \frac{576+36\lambda \Delta t^2+\lambda^3 \Delta t^6}{576}>0$.
\end{proof}

\subsection{Polynomial degree \texorpdfstring{$r=3$}{r=3}}\label{sec:P3_1st}
For $\mathbb{V}_{n}^3$ we consider the same basis functions as in Section~\ref{sec:P2}, and use the GLL nodes and weights
\begin{equation*}
    \{x_1, x_2, x_3, x_4\} = \{-1,-1/\sqrt{5},1/\sqrt{5},1\}, \quad \{w_1, w_2, w_3, w_4\} = \{1/6,5/6,5/6,1/6\}. 
\end{equation*}
The matrices $N_4$ and $N_5$ become
\begin{equation}\label{eq:mat:p3}
    N_4 = \Delta t \begin{bmatrix}
    1/12 & -\sqrt{5}/12  & \sqrt{5}/12 & -1/12 \\ 1/12 & \phantom{-}1/4  & (10-7\sqrt{5})/60 & \phantom{-}\sqrt{5}/60 \\ 
1/12 & (10+7\sqrt{5})/60  & 1/4 & -\sqrt{5}/60 \\
1/12 & \phantom{-}5/12 & 5/12 & \phantom{-}1/12 
\end{bmatrix}, \quad 
    N_5 = \begin{bmatrix}
        0 & 0 & 0 & -1 \\ 0 & 0 & 0 & -1 \\ 0 &  0 & 0 & -1 \\0 & 0 & 0 & -1   
        \end{bmatrix},
\end{equation}
corresponding to the Butcher tableau
\begin{equation*}
\renewcommand\arraystretch{1.2}
\begin{array}{c|cccc}
 0 &  1/12 & -\sqrt{5}/12  & \sqrt{5}/12 & -1/12 \\ 1/2 - \sqrt{5}/10 & 1/12 & \phantom{-}1/4  & (10-7\sqrt{5})/60 & \phantom{-}\sqrt{5}/60 \\ 
1/2 + \sqrt{5}/10 & 1/12 & (10+7\sqrt{5})/60  & 1/4 & -\sqrt{5}/60 \\
1 & 1/12 & \phantom{-}5/12 & 5/12 & \phantom{-}1/12 \\
\hline
& 1/12 & 5/12 & 5/12 & 1/12  
\end{array}
\end{equation*}
for \eqref{eq:rk_kroneker}, i.e., the $4$-stage Lobatto IIIC method, see \cite{Jay2015}.

\begin{corollary}[DG is a RK Lobatto IIIC - $\mathbb{P}^3$]\label{coro:RKLIIIC_r3}
    Let $r=3$. Denote by $\{ (\bm k_n , \bm z_n) \}_n$, with $\bm k_n = [\bm k_{1,n},\bm k_{2,n}]^\top$, the sequence produced by the Lobatto IIIC scheme \eqref{eq:RK_extended_0}-\eqref{eq:RK_extended} starting from $\bm z_0$, and by $\{ \widehat{\bm z}_n \}_n$ the sequence generated by the DG scheme \eqref{eq:dg_kroneker} starting from $\widehat{\bm z}_0$.
    If $[\widehat{\bm z}_0]_{7,8} = \bm z_0$, then 
    \begin{equation}\label{eq:equivalenceP3}
        [\widehat{\bm z}_{n}]_{7,8} = \bm z_{n}
        \quad \text{and} \quad
        \widehat{\bm z}_n = (I_{4} \otimes L^{-1}) \bm k_{n-1}
        \quad \text{for $n=1,\dots,N$}.
    \end{equation}
\end{corollary}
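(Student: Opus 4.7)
The plan is to apply Theorem~\ref{thm:equiv} exactly as in the proofs of Corollaries~\ref{coro:RKLIIIC_r1} and \ref{coro:RKLIIIC_r2}. All that is required is to verify the hypotheses of the theorem for the $\mathbb{P}^3$ case and then let the general equivalence result do the work.

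First I would check the structural hypotheses. With $r=3$ we have $s=r+1=4$, and the Butcher tableau listed just before the statement of the corollary shows that $b_i = a_{4,i}$ for $i=1,2,3,4$, since the last row of $A$ is $(1/12,5/12,5/12,1/12)$, which coincides with $\bm b^\top$. The matrix $N^4$ extracted from \eqref{eq:mat:p3} equals $\Delta t \, A$ by construction (this is precisely how the Butcher tableau was read off the GLL-DG discretization). The matrix $N^5$ in \eqref{eq:mat:p3} has all entries equal to $0$ except in the last column where each entry is $-1$, which is exactly the structure required by Theorem~\ref{thm:equiv}. Finally, $[\widehat{\bm z}_0]_{7,8}=\bm z_0$ is the initialization assumption.

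The only remaining point is the invertibility of $K := (I_s \otimes I_2 - N^4\otimes L) = (I_4\otimes I_2 - \Delta t\, A\otimes L)$, which is an $8\times 8$ matrix. I would verify this by computing $\det K$ symbolically in $\lambda$ and $\Delta t$. Using the identity $\det(I_s\otimes I_2 - \Delta t\, A\otimes L) = \det(I_{2s} - \Delta t\,A\otimes L)$ and the fact that the eigenvalues of $A\otimes L$ are $\sigma(A)\,\sigma(L) = \{\pm i\sqrt{\lambda}\,\mu : \mu\in\sigma(A)\}$, one obtains a product of factors of the form $(1+\lambda\Delta t^2 \mu^2)$. Since the eigenvalues of the Lobatto~IIIC $A$ matrix come in complex conjugate pairs with positive real parts of $\mu^2$ (consistent with L-stability), each factor is strictly positive for $\lambda>0$ and $\Delta t>0$, so $\det K>0$. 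Concretely, an explicit Kronecker expansion yields a polynomial of the form $\det K = p(\lambda\Delta t^2)$ with $p(0)=1$ and only nonnegative coefficients (mirroring the $\mathbb{P}^1$ expression $(4+\lambda^2\Delta t^4)/4$ and the $\mathbb{P}^2$ expression $(576+36\lambda\Delta t^2+\lambda^3\Delta t^6)/576$), hence $\det K>0$ for all $\lambda,\Delta t>0$.

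Once these hypotheses are confirmed, Theorem~\ref{thm:equiv} applies verbatim and yields $[\widehat{\bm z}_n]_{7,8}=\bm z_n$ and $\widehat{\bm z}_n = (I_4\otimes L^{-1})\bm k_{n-1}$ for $n=1,\dots,N$. The main (very mild) obstacle is the symbolic determinant computation: expanding an $8\times 8$ Kronecker-structured matrix by hand is tedious, but the Kronecker eigenvalue argument above avoids any direct expansion and makes positivity of $\det K$ immediate from the known spectrum of the Lobatto~IIIC coefficient matrix.
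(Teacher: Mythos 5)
Your proposal is correct and follows the paper's proof almost verbatim: both reduce the corollary to Theorem~\ref{thm:equiv} and only need to verify its hypotheses for $r=3$ (namely $s=4$, $b_i=a_{4,i}$, $N^4=\Delta t\,A$, the special structure of $N^5$, and the invertibility of $K=(I_4\otimes I_2-\Delta t\,A\otimes L)$). The one place you diverge is the invertibility check: the paper computes the determinant explicitly, obtaining $\det K = \frac{\Delta t^8\lambda^4}{129600}+\frac{\Delta t^4\lambda^2}{900}+\frac{2\Delta t^2\lambda}{45}+1>0$, whereas you argue via the spectrum of $A\otimes L$, writing $\det K=\prod_{\mu\in\sigma(A)}(1+\lambda\Delta t^2\mu^2)$. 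Your route is cleaner and scales to any stage number, but one statement needs repair: for complex $\mu$ the individual factor $1+\lambda\Delta t^2\mu^2$ is not ``strictly positive'' (it is not even real). What saves the argument is that conjugate eigenvalues pair up to give $|1+\lambda\Delta t^2\mu^2|^2\ge 0$, and a factor can vanish only if $\mu^2$ is a negative real number, i.e.\ only if $\mu$ is purely imaginary --- which is excluded because the eigenvalues of the Lobatto~IIIC coefficient matrix have strictly positive real part (equivalently, the poles $1/\mu$ of the L-stable stability function lie in the right half-plane). With that correction your spectral argument is complete, and it is arguably preferable to the paper's brute-force expansion of the $8\times 8$ determinant.
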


\begin{proof}
    The result follows again by Theorem~\ref{thm:equiv}: the coefficients $b_j$ and $a_{j,i}$ and the matrix $N^5$ (given in \eqref{eq:mat:p3}) satisfy the hypotheses of Theorem~\ref{thm:equiv}, and a direct calculation reveals that $K:= (I_s \otimes I_4 - \Delta t A \otimes L)$ is
    \begin{equation*}
        K = \begin{bmatrix}
             1& -\frac{\Delta t}{12}&                          0&            \frac{\sqrt{5} \Delta t}{12}&                           0&          -\frac{\sqrt{5} \Delta t}{12}&                 0&      \frac{\Delta t}{12} \\
\frac{\Delta t \lambda}{12}&      1&          -\frac{\sqrt{5} \Delta t \lambda}{12}&                    0&            \frac{\sqrt{5} \Delta t \lambda}{12}&                   0&   -\frac{\Delta t \lambda}{12}&          0\\
             0& -\frac{\Delta t}{12}&                          1&                -\frac{\Delta t}{4}&                           0& a_-&                 0& -\frac{\sqrt{5} \Delta t}{60} \\
\frac{\Delta t \lambda}{12}&      0&      \frac{\Delta t \lambda}{4}&                    1& -a_-&                   0&  \frac{\sqrt{5} \Delta t \lambda}{60}&          0\\
             0& -\frac{\Delta t}{12}&                          0& -a_+&                           1&               -\frac{\Delta t}{4}&                 0&  \frac{\sqrt{5} \Delta t}{60}\\
\frac{\Delta t \lambda}{12}&      0& a_+&                    0&               \frac{\Delta t \lambda}{4}&                   1& -\frac{ \Delta t \lambda}{60}&          0\\
             0& -\frac{\Delta t}{12}&                          0&           -\frac{5 \Delta t}{12}&                           0&          -\frac{5 \Delta t}{12}&                 1&     -\frac{\Delta t}{12}\\
\frac{\Delta t \lambda}{12}&      0&           \frac{5 \Delta t \lambda}{12}&                    0&            \frac{5 \Delta t \lambda}{12}&                   0&      \frac{\Delta t \lambda}{12}&          1\\
        \end{bmatrix},
    \end{equation*}
    where $a_- = \Delta t \lambda (7 \sqrt{5}-10)$, $a_+ = \Delta t \lambda (7 \sqrt{5}+10)$
    and 
$    {\rm det} \, K = \frac{\Delta t^8 \lambda^4}{129600} 
    + \frac{\Delta t^4 \lambda^2}{900} 
   + \frac{2 \Delta t^2 \lambda}{45} + 1 > 0 
$
\end{proof}

\section{Numerical results: DG2 vs DG1}\label{sec:comparison}
We now compare the DG2 method proposed in Section \ref{sec:dG:second} and the DG1 in Section \ref{sec:dG:first} numerically. For the model problem \eqref{eq:modelProblem}, we consider the regular solution $u_{ex} = \cos(t) + \sin(t)$ in the time interval $(0,20]$ 
and set $\lambda=1$, $\widehat{u}_0 = 1$ and $\widehat{u}_1 = 1$. For the DG2 scheme, we show in Figure \ref{fig:conv_test_dt} 
\begin{figure}[t]
%
%
\begin{tikzpicture}

\begin{axis}[%
width=0.4\textwidth,
height=0.4\textwidth,
scale only axis,
xmode=log,
xmin=0.05,
xmax=1,
xminorticks=true,
xlabel style={font=\color{white!15!black}},
xlabel={$\Delta  t $},
ymode=log,
ymin=2.e-07,
ymax=50,
yminorticks=true,
ylabel style={font=\color{white!15!black}},
title={$\displaystyle{\max_{n=0,...,N-1}} \|\bm e_{n} \|, \; s = 0$},
axis background/.style={fill=white},
xmajorgrids,
xminorgrids,
ymajorgrids,
yminorgrids,
legend style={at={(0.6,0.018)}, anchor=south west, legend cell align=left, align=left, draw=white!15!black}
]
\addplot [color=blue, line width=2.0pt, mark=o, mark options={solid, blue}]
  table[row sep=crcr]{%
1	1.31200868671537\\
0.5	0.978485408201263\\
0.4	0.86077520619985\\
0.2	0.529683457867475\\
0.1	0.296121038227356\\
0.05	0.156774071752195\\
};
\addlegendentry{$r=1$}

\addplot [color=green, line width=2.0pt, mark=+, mark options={solid, green}]
  table[row sep=crcr]{%
1	0.0920055457793678\\
0.5	0.0161025813597016\\
0.4	0.00991662178863334\\
0.2	0.0028920463851152\\
0.1	0.000777457570079521\\
0.05	0.000201288491896823\\
};
\addlegendentry{$r=2$}

\addplot [color=black, line width=2.0pt, mark=square, mark options={solid, black}]
  table[row sep=crcr]{%
1	0.0112846429614378\\
0.5	0.0013283103927112\\
0.4	0.000657750333485363\\
0.2	7.51586142326843e-05\\
0.1	8.88712910340761e-06\\
0.05	1.07728720610822e-06\\
};
\addlegendentry{$r=3$}

\addplot [color=black, forget plot]
  table[row sep=crcr]{%
0.1	   5.e-06    \\
0.05   6.2500e-07\\
};

\addplot [color=black, forget plot]
  table[row sep=crcr]{%
0.1	   6.2500e-07    \\
0.05   6.2500e-07\\
};
\addplot [color=black, forget plot]
  table[row sep=crcr]{%
0.1	  5.e-06    \\
0.1   6.2500e-07\\
};

\addplot [color=black, forget plot]
  table[row sep=crcr]{%
0.1	   0.0005    \\
0.05   1.2500e-04\\
};

\addplot [color=black, forget plot]
  table[row sep=crcr]{%
0.1	   1.2500e-04    \\
0.05   1.2500e-04\\
};
\addplot [color=black, forget plot]
  table[row sep=crcr]{%
0.1	  0.0005    \\
0.1   1.2500e-04\\
};

\addplot [color=black, forget plot]
  table[row sep=crcr]{%
0.1	   0.2    \\
0.05   0.1\\
};

\addplot [color=black, forget plot]
  table[row sep=crcr]{%
0.1	   0.1 \\
0.05   0.1 \\
};
\addplot [color=black, forget plot]
  table[row sep=crcr]{%
0.1	  0.1    \\
0.1   0.2\\
};

\node[right, align=left, text=black, font=\normalsize]
at (axis cs:0.1,0.15) {$1$};

\node[right, align=left, text=black, font=\normalsize]
at (axis cs:0.1,0.0003) {$2$};

\node[right, align=left, text=black, font=\normalsize]
at (axis cs:0.1,2.e-6) {$3$};

\end{axis}
\end{tikzpicture}%
%
%
\begin{tikzpicture}

\begin{axis}[%
width=0.4\textwidth,
height=0.4\textwidth,
scale only axis,
xmode=log,
xmin=0.05,
xmax=1,
xminorticks=true,
xlabel style={font=\color{white!15!black}},
xlabel={$\Delta  t $},
ymode=log,
ymin=2.e-7,
ymax=50,
yminorticks=true,
ylabel style={font=\color{white!15!black}},
title={$\displaystyle{\max_{n=0,...,N-1}} \|\bm e_{n} \|, \; s=\frac{\Delta t^2}{2}$},
axis background/.style={fill=white},
xmajorgrids,
xminorgrids,
ymajorgrids,
yminorgrids,
legend style={at={(0.6,0.018)}, anchor=south west, legend cell align=left, align=left, draw=white!15!black}
]

\addplot [color=blue, line width=2.0pt, mark=o, mark options={solid, blue}]
  table[row sep=crcr]{%
1	0.888366981793343\\
0.5	0.250950790332436\\
0.4	0.165992571785768\\
0.2	0.0423876109919296\\
0.1	0.0107201590458514\\
0.05	0.0026890783661675\\
};
\addlegendentry{$r=1$ }

\addplot [color=green, line width=2.0pt, mark=+, mark options={solid, green}]
  table[row sep=crcr]{%
     1.000000000000000e+00     1.618875764418156e+01\\
     5.000000000000000e-01     3.160268132374196e+00\\
     4.000000000000000e-01     2.193440855772271e+00\\
     2.000000000000000e-01     8.440587526510224e-01\\
     1.000000000000000e-01     3.734893742548486e-01\\
     5.000000000000000e-02     1.760084136358375e-01\\
     2.000000000000000e-02     6.798497951541171e-02\\
     1.000000000000000e-02     3.360186200305582e-02\\
};
\addlegendentry{$r=2$}

\addplot [color=black, line width=2.0pt, mark=square, mark options={solid, black}]
  table[row sep=crcr]{%
     1.000000000000000e+00     1.365703033386544e+01\\
     5.000000000000000e-01     3.079518981422390e+00\\
     4.000000000000000e-01     2.157962360538667e+00\\
     2.000000000000000e-01     8.399030910906271e-01\\
     1.000000000000000e-01     3.727926926168081e-01\\
     5.000000000000000e-02     1.758672437085544e-01\\
     2.000000000000000e-02     6.796511149399009e-02\\
     1.000000000000000e-02     3.359712515910118e-02\\
};
\addlegendentry{$r=3$}

\addplot [color=black, forget plot]
  table[row sep=crcr]{%
0.1	   0.008    \\
0.05   0.0020\\
};

\addplot [color=black, forget plot]
  table[row sep=crcr]{%
0.1	   0.0020 \\
0.05   0.0020 \\
};
\addplot [color=black, forget plot]
  table[row sep=crcr]{%
0.1	  0.008    \\
0.1   0.0020\\
};

\addplot [color=black, forget plot]
  table[row sep=crcr]{%
0.1	   0.2    \\
0.05   0.1\\
};

\addplot [color=black, forget plot]
  table[row sep=crcr]{%
0.1	   0.1 \\
0.05   0.1 \\
};
\addplot [color=black, forget plot]
  table[row sep=crcr]{%
0.1	  0.1    \\
0.1   0.2\\
};

\node[right, align=left, text=black, font=\normalsize]
at (axis cs:0.1,0.15) {$1$};

\node[right, align=left, text=black, font=\normalsize]
at (axis cs:0.1,0.005) {$2$};



\end{axis}
\end{tikzpicture}%
\caption{DG2: computed convergence errors $\| \bm e_n \|$ as a function of the time step $\Delta t$ for $r=1,2,3$ with $s=0$ (left) and $s=\frac{\Delta t^2}{2}$ (right).}\label{fig:conv_test_dt}
\end{figure}
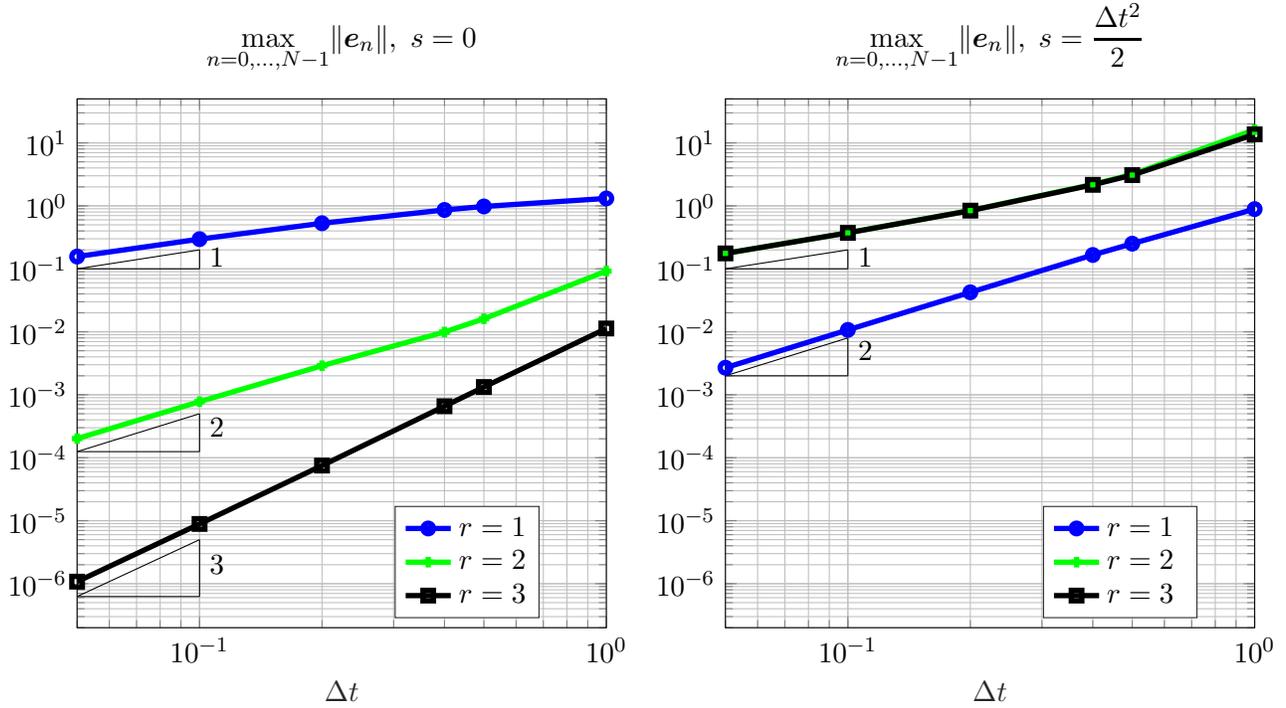
the computed errors $\| \bm e_n \| = \| \bm u_n^{ex} - \bm u_n \|$ as a function of the time step $\Delta t$, by varying the polynomial degree $r=1,2,3$. In Figure \ref{fig:conv_test_dt} we select $s=0$ (left) and $s=\frac{\Delta t^2}{2}$ (right) in \eqref{eq:intStrong}.
The numerical results confirm the theoretical ones in Lemma \ref{lemma:general}. In particular, for $s=0$ we can observe an order of convergence of $\mathcal{O}(\Delta t^r)$, cf. Figure \ref{fig:conv_test_dt}-left, while for $s=\frac{\Delta t^2}{2}$ the order of convergence is $\mathcal{O}(\Delta t^2)$ for $r=1$ and $\mathcal{O}(\Delta t)$ for $r=2,3$. This agrees with the findings in Section \ref{sec:dG:second}. 
Next, to further illustrate these findings, we plot in Figure \ref{fig:cons_test_dt} 
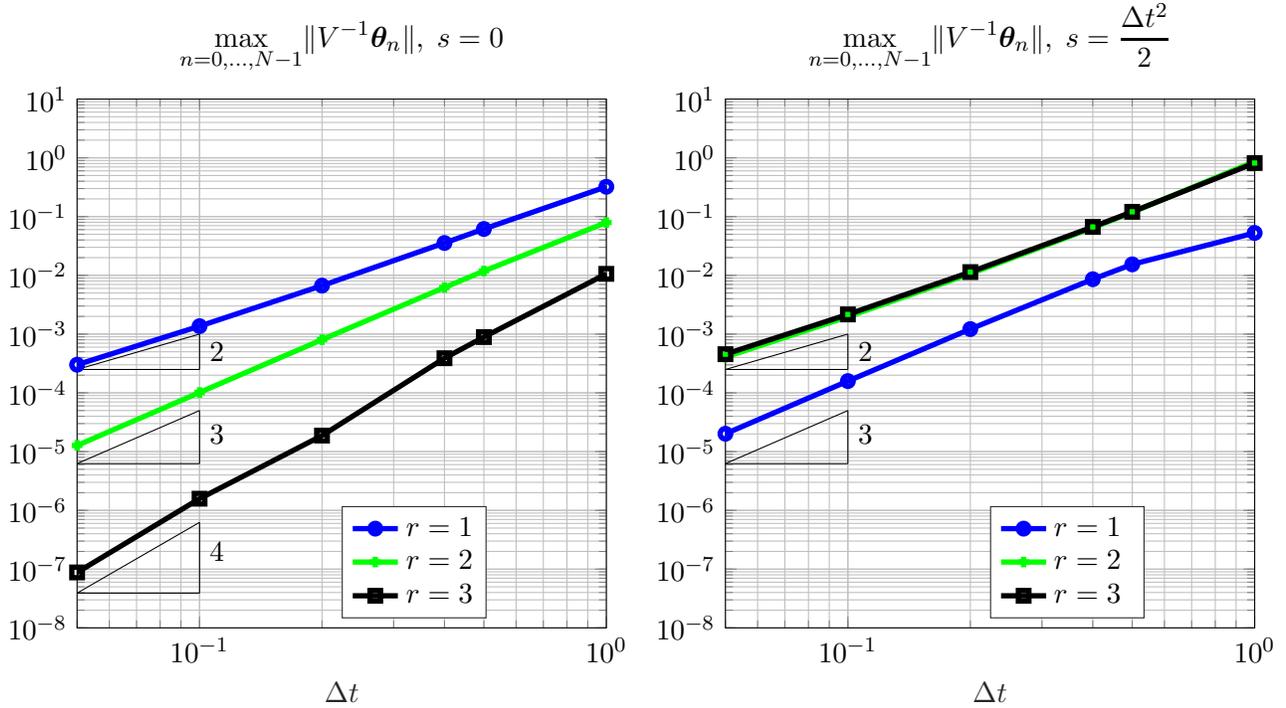
\begin{figure}[t]
%
%
\begin{tikzpicture}

\begin{axis}[%
width=0.4\textwidth,
height=0.4\textwidth,
scale only axis,
xmode=log,
xmin=0.05,
xmax=1,
xminorticks=true,
xlabel style={font=\color{white!15!black}},
xlabel={$\Delta  t $},
ymode=log,
ymin=1.e-08,
ymax=10,
yminorticks=true,
ylabel style={font=\color{white!15!black}},
title={$\displaystyle{\max_{n=0,...,N-1}} \|V^{-1}\bm \theta_{n} \|, \; s=0$},
axis background/.style={fill=white},
xmajorgrids,
xminorgrids,
ymajorgrids,
yminorgrids,
legend style={at={(0.5,0.018)}, anchor=south west, legend cell align=left, align=left, draw=white!15!black}
]
\addplot [color=blue, line width=2.0pt, mark=o, mark options={solid, blue}]
  table[row sep=crcr]{%
     1.000000000000000e+00     3.215087214463894e-01\\
     5.000000000000000e-01     6.148337551915015e-02\\
     4.000000000000000e-01     3.554159256042001e-02\\
     2.000000000000000e-01     6.680318042508684e-03\\
     1.000000000000000e-01     1.363775463528934e-03\\
     5.000000000000000e-02     3.012322237661073e-04\\
     2.000000000000000e-02     4.432699555301482e-05\\
     1.000000000000000e-02     1.075719540393559e-05\\
};
\addlegendentry{$r=1$}

\addplot [color=green, line width=2.0pt, mark=+, mark options={solid, green}]
  table[row sep=crcr]{%
     1.000000000000000e+00     7.930495295384118e-02\\
     5.000000000000000e-01     1.183119285702790e-02\\
     4.000000000000000e-01     6.205310096972965e-03\\
     2.000000000000000e-01     8.032384422272670e-04\\
     1.000000000000000e-01     1.014708198191698e-04\\
     5.000000000000000e-02     1.272833374343739e-05\\
     2.000000000000000e-02     8.154292144582024e-07\\
     1.000000000000000e-02     1.020450510583757e-07\\
};
\addlegendentry{$r=2$}

\addplot [color=black, line width=2.0pt, mark=square, mark options={solid, black}]
  table[row sep=crcr]{%
     1.000000000000000e+00     1.059838802624262e-02\\
     5.000000000000000e-01     8.901966270799954e-04\\
     4.000000000000000e-01     3.886298317492631e-04\\
     2.000000000000000e-01     1.873190052496465e-05\\
     1.000000000000000e-01     1.579491525944934e-06\\
     5.000000000000000e-02     8.786975412908725e-08\\
     2.000000000000000e-02     2.890761817163110e-09\\
     1.000000000000000e-02     9.363188880974796e-10\\
};
\addlegendentry{$r=3$}

\addplot [color=black, forget plot]
  table[row sep=crcr]{%
0.1	   6.2500e-07     \\
0.05   3.906250000000000e-08\\
};

\addplot [color=black, forget plot]
  table[row sep=crcr]{%
0.1	   3.906250000000000e-08  \\
0.05   3.906250000000000e-08\\
};
\addplot [color=black, forget plot]
  table[row sep=crcr]{%
0.1	  6.2500e-07    \\
0.1   3.906250000000000e-08\\
};

\addplot [color=black, forget plot]
  table[row sep=crcr]{%
0.1	   5e-5    \\
0.05    6.250000000000000e-06\\
};

\addplot [color=black, forget plot]
  table[row sep=crcr]{%
0.1	    6.250000000000000e-06    \\
0.05    6.250000000000000e-06\\
};
\addplot [color=black, forget plot]
  table[row sep=crcr]{%
0.1	  5e-5     \\
0.1    6.250000000000000e-06\\
};

\addplot [color=black, forget plot]
  table[row sep=crcr]{%
0.1	   1e-3    \\
0.05   2.500000000000000e-04\\
};

\addplot [color=black, forget plot]
  table[row sep=crcr]{%
0.1	   2.500000000000000e-04    \\
0.05   2.500000000000000e-04\\
};
\addplot [color=black, forget plot]
  table[row sep=crcr]{%
0.1	  1e-3     \\
0.1    2.500000000000000e-04\\
};

\node[right, align=left, text=black, font=\normalsize]
at (axis cs:0.1,5.e-4) {$2$};

\node[right, align=left, text=black, font=\normalsize]
at (axis cs:0.1,2.e-5) {$3$};

\node[right, align=left, text=black, font=\normalsize]
at (axis cs:0.1,2.e-7) {$4$};

\end{axis}
\end{tikzpicture}%
%
%
\begin{tikzpicture}

\begin{axis}[%
width=0.4\textwidth,
height=0.4\textwidth,
scale only axis,
xmode=log,
xmin=0.05,
xmax=1,
xminorticks=true,
xlabel style={font=\color{white!15!black}},
xlabel={$\Delta  t $},
ymode=log,
ymin=1.e-08,
ymax=10,
yminorticks=true,
ylabel style={font=\color{white!15!black}},
title={$\displaystyle{\max_{n=0,...,N-1}} \|V^{-1}\bm \theta_{n} \|, \; s = \frac{\Delta t^2}{2}$},
axis background/.style={fill=white},
xmajorgrids,
xminorgrids,
ymajorgrids,
yminorgrids,
legend style={at={(0.5,0.018)}, anchor=south west, legend cell align=left, align=left, draw=white!15!black}
]

\addplot [color=blue, line width=2.0pt, mark=o, mark options={solid, blue}]
  table[row sep=crcr]{%
     1.000000000000000e+00     5.266151662454171e-02\\
     5.000000000000000e-01     1.537278580340113e-02\\
     4.000000000000000e-01     8.591173995249972e-03\\
     2.000000000000000e-01     1.217443187621042e-03\\
     1.000000000000000e-01     1.585414264712073e-04\\
     5.000000000000000e-02     2.012672949790195e-05\\
     2.000000000000000e-02     1.298113960809742e-06\\
     1.000000000000000e-02     1.626473532775534e-07\\
};
\addlegendentry{$r=1$ }

\addplot [color=green, line width=2.0pt, mark=+, mark options={solid, green}]
  table[row sep=crcr]{%
     1.000000000000000e+00     8.672988047149610e-01\\
     5.000000000000000e-01     1.183006753595093e-01\\
     4.000000000000000e-01     6.461379101647341e-02\\
     2.000000000000000e-01     1.065152306434117e-02\\
     1.000000000000000e-01     1.972898147236422e-03\\
     5.000000000000000e-02     4.065155544300030e-04\\
     2.000000000000000e-02     5.668853712505187e-05\\
     1.000000000000000e-02     1.347769177539223e-05\\
     };
\addlegendentry{$r=2$}

\addplot [color=black, line width=2.0pt, mark=square, mark options={solid, black}]
  table[row sep=crcr]{%
     1.000000000000000e+00     8.152986930836548e-01\\
     5.000000000000000e-01     1.204810351228196e-01\\
     4.000000000000000e-01     6.666512074651516e-02\\
     2.000000000000000e-01     1.135656291111717e-02\\
     1.000000000000000e-01     2.161978862547223e-03\\
     5.000000000000000e-02     4.549551572251609e-04\\
     2.000000000000000e-02     6.453450305135409e-05\\
     1.000000000000000e-02     1.537109423556985e-05\\
};
\addlegendentry{$r=3$}

\addplot [color=black, forget plot]
  table[row sep=crcr]{%
0.1	   5e-5    \\
0.05    6.250000000000000e-06\\
};

\addplot [color=black, forget plot]
  table[row sep=crcr]{%
0.1	    6.250000000000000e-06    \\
0.05    6.250000000000000e-06\\
};
\addplot [color=black, forget plot]
  table[row sep=crcr]{%
0.1	  5e-5     \\
0.1    6.250000000000000e-06\\
};

\addplot [color=black, forget plot]
  table[row sep=crcr]{%
0.1	   1e-3    \\
0.05   2.500000000000000e-04\\
};

\addplot [color=black, forget plot]
  table[row sep=crcr]{%
0.1	   2.500000000000000e-04    \\
0.05   2.500000000000000e-04\\
};
\addplot [color=black, forget plot]
  table[row sep=crcr]{%
0.1	  1e-3     \\
0.1    2.500000000000000e-04\\
};

\node[right, align=left, text=black, font=\normalsize]
at (axis cs:0.1,5.e-4) {$2$};

\node[right, align=left, text=black, font=\normalsize]
at (axis cs:0.1,2.e-5) {$3$};

\end{axis}
\end{tikzpicture}%
\caption{DG2: computed consistency errors $\| V^{-1} \bm \theta_n \|$ as a function of the time step $\Delta t$ for $r=1,2,3$ with $s=0$ (left) and $s=\frac{\Delta t^2}{2}$ (right). }\label{fig:cons_test_dt}
\end{figure}
the consistency error $\| V^{-1}\bm \theta_n \| $ as a function of the time step $\Delta t$ by varying the polynomial degree $r=1,2,3$, with $s=0$ (left) and with $s=\frac{\Delta t^2}2$ (right). We see that for $s=0$ the asymptotic trend is $\mathcal{O}(\Delta t^{r+1})$  while for 
$ s = \frac{\Delta t^2}{2}$ we obtain $\mathcal{O}(\Delta t^2)$ for $r=2,3$ and $\mathcal{O}(\Delta t^3)$ for $r=1$. Again, this is aligned with the  theoretical results of Section \ref{sec:dG:second}.

For the DG1 method of Section \ref{sec:dG:first} we show the asymptotic rate of convergence with respect to the time step $\Delta t$ in Figure \ref{fig:convDG2_test_dt}. 
\begin{figure}[t]    
%
%
\begin{tikzpicture}

\begin{axis}[%
width=0.4\textwidth,
height=0.4\textwidth,
scale only axis,
xmode=log,
xmin=0.05,
xmax=1,
xminorticks=true,
xlabel style={font=\color{white!15!black}},
xlabel={$\Delta  t $},
ymode=log,
ymin=2.e-13,
ymax=5,
yminorticks=true,
ylabel style={font=\color{white!15!black}},
title={$\displaystyle{\max_{n=0,...,N-1}} \|\bm e_{n} \|$},
axis background/.style={fill=white},
xmajorgrids,
xminorgrids,
ymajorgrids,
yminorgrids,
legend style={at={(0.536,0.058)}, anchor=south west, legend cell align=left, align=left, draw=white!15!black}
]
\addplot [color=blue, line width=2.0pt, mark=o, mark options={solid, blue}]
  table[row sep=crcr]{%
   1.000000000000000   1.020748362436374\\
   0.500000000000000   0.677745419461844\\
   0.400000000000000   0.477124880806738\\
   0.200000000000000   0.127677834212889\\
   0.100000000000000   0.032094394287500\\
   0.050000000000000   0.008028301389498\\
};
\addlegendentry{$r=1$}

\addplot [color=green, line width=2.0pt, mark=+, mark options={solid, green}]
  table[row sep=crcr]{%
     1.000000000000000e+00     4.674991142266605e-02\\
     5.000000000000000e-01     4.807563900536966e-03\\
     4.000000000000000e-01     2.534635619727171e-03\\
     2.000000000000000e-01     3.291770208722622e-04\\
     1.000000000000000e-01     4.153653959085707e-05\\
     5.000000000000000e-02     5.204264958653255e-06\\
     };
\addlegendentry{$r=2$ }

\addplot [color=black, line width=2.0pt, mark=square, mark options={solid, black}]
  table[row sep=crcr]{%
     1.000000000000000e+00     2.700254959893789e-03\\
     5.000000000000000e-01     1.722951154617958e-04\\
     4.000000000000000e-01     7.079682577704194e-05\\
     2.000000000000000e-01     4.439530768340916e-06\\
     1.000000000000000e-01     2.777128599307233e-07\\
     5.000000000000000e-02     1.735990218243444e-08\\
};
\addlegendentry{$r=3$}

\addplot [color=black, forget plot]
  table[row sep=crcr]{%
0.1	   1.e-07    \\
0.05   6.250e-09\\
};

\addplot [color=black, forget plot]
  table[row sep=crcr]{%
0.1	   6.250e-09    \\
0.05   6.250e-09\\
};
\addplot [color=black, forget plot]
  table[row sep=crcr]{%
0.1	  1.e-07    \\
0.1   6.250e-09\\
};

\addplot [color=black, forget plot]
  table[row sep=crcr]{%
0.1	   2.e-5    \\
0.05   2.5e-06\\
};

\addplot [color=black, forget plot]
  table[row sep=crcr]{%
0.1	   2.5e-06   \\
0.05   2.5e-06\\
};
\addplot [color=black, forget plot]
  table[row sep=crcr]{%
0.1	  2.e-5    \\
0.1   2.5e-06\\
};

\addplot [color=black, forget plot]
  table[row sep=crcr]{%
0.1	   0.02    \\
0.05   0.005\\
};

\addplot [color=black, forget plot]
  table[row sep=crcr]{%
0.1	   0.005 \\
0.05   0.005 \\
};
\addplot [color=black, forget plot]
  table[row sep=crcr]{%
0.1	  0.02    \\
0.1   0.005\\
};

\node[right, align=left, text=black, font=\normalsize]
at (axis cs:0.1,0.009) {$2$};

\node[right, align=left, text=black, font=\normalsize]
at (axis cs:0.1,7e-6) {$3$};

\node[right, align=left, text=black, font=\normalsize]
at (axis cs:0.1,3.e-8) {$4$};

\end{axis}
\end{tikzpicture}%
%
%
\begin{tikzpicture}

\begin{axis}[%
width=0.4\textwidth,
height=0.4\textwidth,
scale only axis,
xmode=log,
xmin=0.05,
xmax=1,
xminorticks=true,
xlabel style={font=\color{white!15!black}},
xlabel={$\Delta  t $},
ymode=log,
ymin=2.e-13,
ymax=5,
yminorticks=true,
ylabel style={font=\color{white!15!black}},
title={$ |\bm e_{N-1} |$},
axis background/.style={fill=white},
xmajorgrids,
xminorgrids,
ymajorgrids,
yminorgrids,
legend style={at={(0.536,0.058)}, anchor=south west, legend cell align=left, align=left, draw=white!15!black}
]
\addplot [color=blue, line width=2.0pt, mark=o, mark options={solid, blue}]
  table[row sep=crcr]{%
     1.000000000000000e+00     2.172848292893698e-01\\
     5.000000000000000e-01     1.566049869127559e-01\\
     4.000000000000000e-01     1.363546264660780e-01\\
     2.000000000000000e-01     4.808860237075863e-02\\
     1.000000000000000e-01     1.316976322284125e-02\\
     5.000000000000000e-02     3.403239858250862e-03   \\
};
\addlegendentry{$r=1$}

\addplot [color=green, line width=2.0pt, mark=+, mark options={solid, green}]
  table[row sep=crcr]{%
     1.000000000000000e+00     1.057765818854750e-02\\
     5.000000000000000e-01     9.102935292245595e-04\\
     4.000000000000000e-01     3.899826633072534e-04\\
     2.000000000000000e-01     2.631166646460681e-05\\
     1.000000000000000e-01     1.698529524807313e-06\\
     5.000000000000000e-02     1.077420093142933e-07\\
     };
\addlegendentry{$r=2$ }

\addplot [color=black, line width=2.0pt, mark=square, mark options={solid, black}]
  table[row sep=crcr]{%
     1.000000000000000e+00     8.248532143906306e-05\\
     5.000000000000000e-01     1.538518779709008e-06\\
     4.000000000000000e-01     4.148292453987068e-07\\
     2.000000000000000e-01     6.813004937988865e-09\\
     1.000000000000000e-01     1.088127365989067e-10\\
     5.000000000000000e-02     1.736832899723595e-12\\
};
\addlegendentry{$r=3$}

\addplot [color=black, forget plot]
  table[row sep=crcr]{%
0.1	   5.e-011    \\
0.05   7.8125e-13\\
};

\addplot [color=black, forget plot]
  table[row sep=crcr]{%
0.1	   7.8125e-13    \\
0.05   7.8125e-13\\
};
\addplot [color=black, forget plot]
  table[row sep=crcr]{%
0.1	  5.e-011    \\
0.1   7.8125e-13\\
};

\addplot [color=black, forget plot]
  table[row sep=crcr]{%
0.1	   8.e-7    \\
0.05   5.0000e-08\\
};

\addplot [color=black, forget plot]
  table[row sep=crcr]{%
0.1	   5.0000e-08   \\
0.05    5.0000e-08\\
};
\addplot [color=black, forget plot]
  table[row sep=crcr]{%
0.1	  8.e-7    \\
0.1   5.0000e-08\\
};

\addplot [color=black, forget plot]
  table[row sep=crcr]{%
0.1	   0.005    \\
0.05   1.250000000000000e-03\\
};

\addplot [color=black, forget plot]
  table[row sep=crcr]{%
0.1	   1.250000000000000e-03 \\
0.05   1.250000000000000e-03 \\
};
\addplot [color=black, forget plot]
  table[row sep=crcr]{%
0.1	  1.250000000000000e-03    \\
0.1   0.005\\
};

\node[right, align=left, text=black, font=\normalsize]
at (axis cs:0.1,0.003) {$2$};

\node[right, align=left, text=black, font=\normalsize]
at (axis cs:0.1,2e-7) {$4$};

\node[right, align=left, text=black, font=\normalsize]
at (axis cs:0.1,5.e-12) {$6$};

\end{axis}
\end{tikzpicture}%
\caption{
DG1: computed convergence errors $\| \bm e_n \|$ (left) and $| \bm e_{N-1}|$ (right) as a function of the time step $\Delta t$ for $r=1,2,3$.}\label{fig:convDG2_test_dt}
\end{figure}
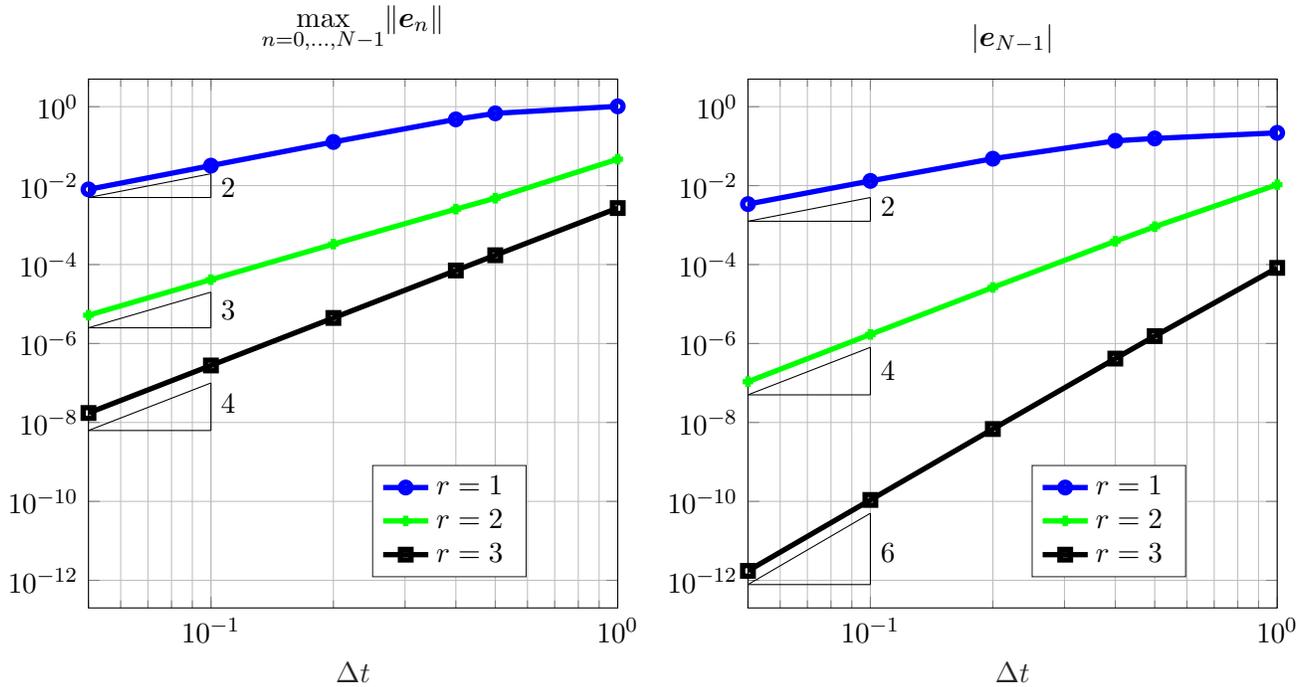
As shown is Section \ref{sec:review:RK} we obtain the rate of convergence $\mathcal{O}(\Delta t^{r+1})$ for $r=1,2,3$ when computing $\| \bm e_n \|$, cf. Figure \ref{fig:convDG2_test_dt}-left, and $\mathcal{O}(\Delta t^{2(r+1)-2})$ when considering the error at the final time $T$, cf. Figure \ref{fig:convDG2_test_dt}-right.
As a general remark, we can say that DG1 outperforms DG2 by retaining a higher order of accuracy for the same polynomial degree $r$. 
Finally, we compute the condition number of the system matrix $A^+$ in \eqref{eq:simple_local_system2} and $B^+$ in \eqref{eq:simple_local_system2_1st} and plot the results in Figure \ref{fig:cond_test_dt}. 
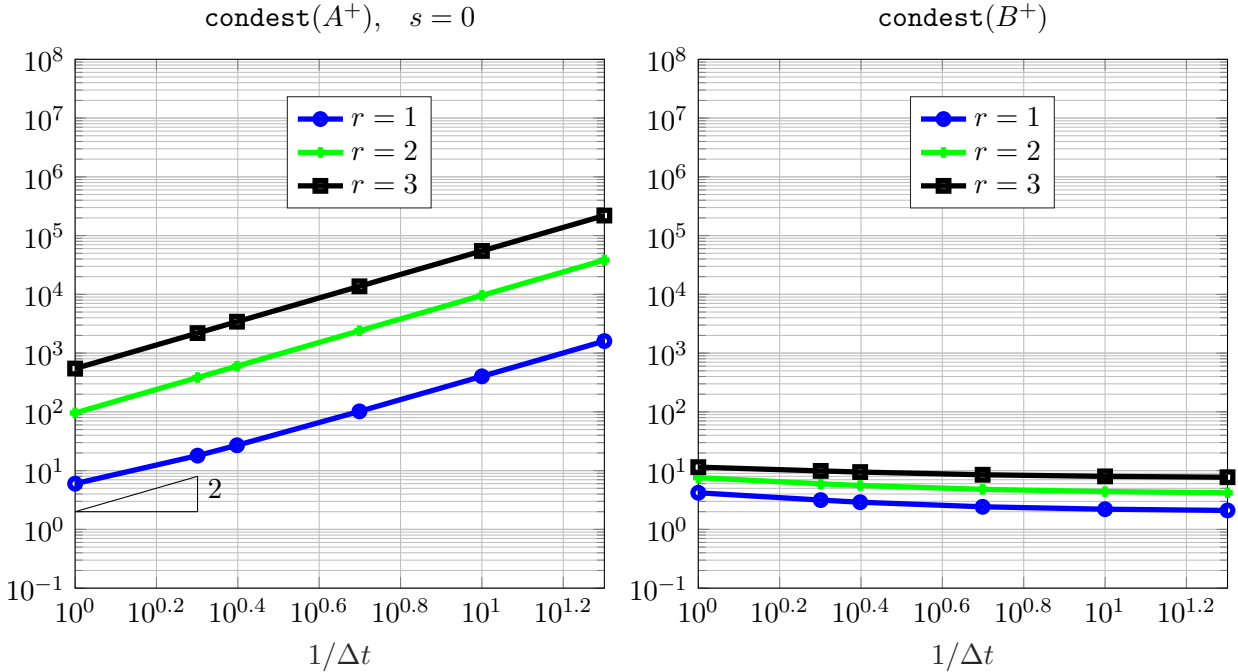
\begin{figure}[t]
%
%
\begin{tikzpicture}

\begin{axis}[%
width=0.4\textwidth,
height=0.4\textwidth,
scale only axis,
xmode=log,
xmin=1,
xmax=20,
xminorticks=true,
xlabel style={font=\color{white!15!black}},
xlabel={$1/\Delta  t $},
ymode=log,
ymin=0.1,
ymax=1e8,
yminorticks=true,
ylabel style={font=\color{white!15!black}},
title={\texttt{condest}($A^+$), \; $s=0$},
axis background/.style={fill=white},
xmajorgrids,
xminorgrids,
ymajorgrids,
yminorgrids,
legend style={at={(0.4,0.718)}, anchor=south west, legend cell align=left, align=left, draw=white!15!black}
]
\addplot [color=blue, line width=2.0pt, mark=o, mark options={solid, blue}]
  table[row sep=crcr]{%
     1     6.000000000000000e+00\\
     2     1.800000000000000e+01\\
     2.5     2.700000000000000e+01\\
     5     1.020000000000000e+02\\
     10     4.019999999999999e+02\\
     20     1.602000000000000e+03\\
};
\addlegendentry{$r=1$}


\addplot [color=green, line width=2.0pt, mark=+, mark options={solid, green}]
  table[row sep=crcr]{%
     1.000000000000000e+00     9.600000000000018e+01\\
     2     3.840000000000035e+02\\
     2.5     5.999999999999912e+02\\
     5     2.399999999999863e+03\\
     10     9.599999999997266e+03\\
     20     3.839999999995633e+04\\
};
\addlegendentry{$r=2$}

\addplot [color=black, line width=2.0pt, mark=square, mark options={solid, black}]
  table[row sep=crcr]{%
     1.000000000000000e+00     5.462680440041535e+02\\
     2     2.191252515904163e+03\\
     2.5     3.424990869828469e+03\\
     5     1.370614381919647e+04\\
     10     5.483075561662664e+04\\
     20     2.193292028060948e+05\\
};
\addlegendentry{$r=3$}

\addplot [color=black, forget plot]
  table[row sep=crcr]{%
1	   2    \\
2      8\\
};

\addplot [color=black, forget plot]
  table[row sep=crcr]{%
2	   2    \\
2      8\\
};

\addplot [color=black, forget plot]
  table[row sep=crcr]{%
1	  2    \\
2     2\\
};

\node[right, align=left, text=black, font=\normalsize]
at (axis cs:2,5) {$2$};

\end{axis}
\end{tikzpicture}%
%
%
\begin{tikzpicture}

\begin{axis}[%
width=0.4\textwidth,
height=0.4\textwidth,
scale only axis,
xmode=log,
xmin=1,
xmax=20,
xminorticks=true,
xlabel style={font=\color{white!15!black}},
xlabel={$1/\Delta  t $},
ymode=log,
ymin=0.1,
ymax=1e8,
yminorticks=true,
ylabel style={font=\color{white!15!black}},
title={\texttt{condest}($B^+$)},
axis background/.style={fill=white},
xmajorgrids,
xminorgrids,
ymajorgrids,
yminorgrids,
legend style={at={(0.4,0.718)}, anchor=south west, legend cell align=left, align=left, draw=white!15!black}
]
\addplot [color=blue, line width=2.0pt, mark=o, mark options={solid, blue}]
  table[row sep=crcr]{%
     1     4.199999999999999e+00\\
     2     3.153846153846154e+00\\
     2.5     2.899841017488077e+00\\
     5     2.423430627748901e+00\\
     10     2.205469863253419e+00\\
     20     2.101310779201908e+00\\
};
\addlegendentry{$r=1$}

\addplot [color=green, line width=2.0pt, mark=+, mark options={solid, green}]
  table[row sep=crcr]{%
     1     7.595432300163133e+00\\
     2     5.968323495633129e+00\\
     2.5     5.588913785425494e+00\\
     5     4.801907586377658e+00\\
     10     4.401070219305521e+00\\
     20     4.200342049196949e+00\\
};
\addlegendentry{$r=2$}

\addplot [color=black, line width=2.0pt, mark=square, mark options={solid, black}]
  table[row sep=crcr]{%
     1     1.146541810017126e+01\\
     2     9.891216696153943e+00\\
     2.5     9.457009704373533e+00\\
     5     8.499820231402406e+00\\
     10     7.985562922643399e+00\\
     20     7.721492795719434e+00\\
};
\addlegendentry{$r=3$}

\end{axis}
\end{tikzpicture}%
\caption{ 
 Computed condition number for the system matrix $A^+$ in \eqref{eq:simple_local_system2} (left) and $B^+$ in \eqref{eq:simple_local_system2_1st} (right).  }\label{fig:cond_test_dt}
\end{figure}
The results for $\Delta t$ tending to zero show that for the DG2 method, the condition number of $A^+$ behaves like $\mathcal{O}(\Delta t^{-2})$ while for the DG1 scheme the condition number of $B^+$ is almost constant. 
This result still favors the DG1 method, which is therefore preferable to the DG2 scheme.

\section{Conclusions}\label{sec:conclusions}

We examined discontinuous Galerkin (DG) discretizations as time integrators for second-order differential systems, exploring two formulations: one in the original problem's second-order form, and another one in its first-order formulation. We presented a new convergence analysis framework for the second-order formulation, studying both consistency and accuracy of the resulting schemes. We also showed the equivalence between the DG formulation and other classical time integrator schemes such as the Newmark scheme and General Linear Methods. We also provided an in-depth review of the first-order formulation, complemented by a new algebraic proof that shows the equivalence of the DG method with the Lobatto IIIC schemes.
We compared the two different formulations in terms of accuracy, consistency, and computational cost. From the results obtained, we conclude that the DG formulation for first-order systems is to be preferred to that for the second-order equation, from every point of view.


\section*{Acknowledgment}
The work of GC and IM has been partially supported by the PRIN2022 grant ASTICE - CUP: D53D23005710006. The present research is part of the activities of “Dipartimento di Eccelllenza 2023-2027”. GC and IM are members of INdAM-GNCS group.

\end{document}